\newcommand{\D}{\mathcal{D}}              
\newcommand{\K}{\mathbb{K}}               
\newcommand{\C}{\mathbb{C}}               
\newcommand{\R}{\mathbb{R}}               
\newcommand{\N}{\mathbb{N}}                
\renewcommand{\Re}{\mathrm{Re}\,}          
\newcommand{\trace}{\mathrm{tr}}           
\renewcommand{\L}{\mathcal{L}}             
\newcommand{\supp}{\mathrm{supp}}          
\newcommand{\cond}{\mathrm{cond}}          
\newcommand{\Cb}{C_{\mathrm{b}}}
\newcommand{\Cub}{C_{\mathrm{ub}}}
\newcommand{\Crub}{C_{\mathrm{rub}}}
\newcommand{\Cbt}{C_{\mathrm{b},\theta}}
\newcommand{\Cubt}{C_{\mathrm{ub},\theta}}
\newcommand{\Crubt}{C_{\mathrm{rub},\theta}}
\newcommand{\amin}{a_{\mathrm{min}}} 
\newcommand{\amax}{a_{\mathrm{max}}} 
\newcommand{\azero}{a_0} 
\newcommand{\bzero}{b_0}
\newcommand{\czero}{c_0} 
\newcommand{\aone}{a_1}
\newcommand{\begriff}[1]{\textbf{#1}}
\newcommand{\sect}
{
  \setcounter{equation}{0}
  \setcounter{figure}{0}
  \section
}
\newcommand{\enum}[1]{\textnormal{(\textbf{#1})}}
\theoremstyle{plain}
\newtheorem{definition}{Definition}[section]
\newtheorem{theorem}[definition]{Theorem}
\newtheorem{lemma}[definition]{Lemma}
\newtheorem{corollary}[definition]{Corollary}
\newtheorem{assumption}[definition]{Assumption}
\newtheorem{remark}[definition]{Remark}
\theoremstyle{definition}
\begin{document}

\title[Resolvent estimates for complex Ornstein-Uhlenbeck systems]{Exponentially weighted resolvent estimates for\\complex Ornstein-Uhlenbeck systems}

\maketitle

\vspace*{0.5cm}
\noindent
\hspace*{4.2cm}
%
\textbf{Denny Otten}\footnote[1]{e-mail: \textcolor{blue}{dotten@math.uni-bielefeld.de}, phone: \textcolor{blue}{+49 (0)521 106 4784}, \\
                                 fax: \textcolor{blue}{+49 (0)521 106 6498}, homepage: \url{http://www.math.uni-bielefeld.de/~dotten/}, \\
                                 supported by CRC 701 'Spectral Structures and Topological Methods in Mathematics'.} \\
\hspace*{4.2cm}
Department of Mathematics \\
\hspace*{4.2cm}
Bielefeld University \\
\hspace*{4.2cm}
33501 Bielefeld \\
\hspace*{4.2cm}
Germany

\vspace*{1.0cm}
\hspace*{4.2cm}
Date: \today
\normalparindent=12pt

\begin{abstract}
  In this paper we study differential operators of the form
  \begin{align*}
    \left[\L_{\infty}v\right](x) = A\triangle v(x) + \left\langle Sx,\nabla v(x)\right\rangle - Bv(x),\,x\in\R^d,\,d\geqslant 2,
  \end{align*}
  for matrices $A,B\in\C^{N,N}$, where the eigenvalues of $A$ have positive real parts. The sum 
  $A\triangle v(x)+\left\langle Sx,\nabla v(x)\right\rangle$ is known as the Ornstein-Uhlenbeck operator with an unbounded drift 
  term defined by a skew-symmetric matrix $S\in\R^{d,d}$. Differential operators such as $\L_{\infty}$ arise as linearizations at rotating 
  waves in time-dependent reaction diffusion systems. The results of this paper serve as foundation for proving exponential decay of 
  such waves. Under the assumption that $A$ and $B$ can be diagonalized simultaneously we construct a heat kernel matrix $H(x,\xi,t)$ 
  of $\L_{\infty}$ that solves the evolution equation $v_t=\L_{\infty}v$. In the following we study the Ornstein-Uhlenbeck semigroup
  \begin{align*}
    \left[T(t)v\right](x) = \int_{\R^d}H(x,\xi,t)v(\xi)d\xi,\,x\in\R^d,\,t>0,
  \end{align*}
  in exponentially weighted function spaces. This is used to derive resolvent estimates for $\L_{\infty}$ in exponentially weighted $L^p$-spaces 
  $L^p_{\theta}(\R^d,\C^N)$, $1\leqslant p<\infty$, as well as in exponentially weighted $\Cb$-spaces $\Cbt(\R^d,\C^N)$.
\end{abstract}

\noindent
\textbf{Key words.} Heat kernel matrix, Ornstein-Uhlenbeck semigroup, exponentially weighted function spaces, resolvent estimates.

\noindent
\textbf{AMS subject classification.} 35K45 (35J47, 35K08, 35B65, 35B45, 35B40).


\sect{Introduction}
\label{sec:Introduction}

In this paper we study differential operators of the form
\begin{align*}
  \left[\L_{\infty}v\right](x) := A\triangle v(x) + \left\langle Sx,\nabla v(x)\right\rangle - Bv(x),\,x\in\R^d,\,d\geqslant 2,
\end{align*}
for simultaneously diagonalizable matrices $A,B\in\C^{N,N}$ with $\Re\sigma(A)>0$ and a skew-symmetric matrix $S\in\R^{d,d}$.

Introducing the complex Ornstein-Uhlenbeck operator, \cite{UhlenbeckOrnstein1930},
\begin{align*}
  \left[\L_0 v\right](x) := A\triangle v(x) + \left\langle Sx,\nabla v(x)\right\rangle,\,x\in\R^d,
\end{align*}
with diffusion term and drift term given by
\begin{align*}
  A\triangle v(x):=A\sum_{i=1}^{d}\frac{\partial^2}{\partial x_i^2}v(x)\quad\text{and}\quad\left\langle Sx,\nabla v(x)\right\rangle:=\sum_{i=1}^{d}(Sx)_i\frac{\partial}{\partial x_i}v(x),
\end{align*}
we observe that the operator $\L_{\infty}=\L_0-B$ is a constant coefficient perturbation of $\L_0$. Our interest is in skew-symmetric matrices $S=-S^T$, in which case 
the drift term is a rotational term containing angular derivatives
\begin{align*}
  \left\langle Sx,\nabla v(x)\right\rangle=\sum_{i=1}^{d-1}\sum_{j=i+1}^{d}S_{ij}\left(x_j\frac{\partial}{\partial x_i}-x_i\frac{\partial}{\partial x_j}\right)v(x).
\end{align*}
Such problems arise when investigating exponential decay of rotating waves in reaction diffusion systems, see \cite{Otten2014} and \cite{BeynLorenz2008}. 
The operator $\L_{\infty}$ appears as a far-field linearization at the solution of the nonlinear problem $\L_0 v=f(v)$ for the Ornstein-Uhlenbeck operator. 
For details see \cite{Otten2014}.

The aim of this paper is to prove unique solvability of the resolvent equation for the realization of $\L_{\infty}$ in different types of exponentially weighted 
function spaces. To be more precise, we investigate the resolvent equation in exponentially weighted $L^p$-spaces $L^p_{\theta}(\R^d,\C^N)$ for $1\leqslant p\leqslant\infty$ and 
in exponentially weighted spaces of bounded continuous functions $\Cbt(\R^d,\C^N)$. The class of underlying weight functions $\theta\in C(\R^d,\R)$ is taken 
from \cite{ZelikMielke2009} and allows both, exponentially decreasing and exponentially increasing weight functions. 

We first determine a complex-valued heat kernel matrix of $\L_{\infty}$, that enables us to introduce the corresponding Ornstein-Uhlenbeck semigroup $(T(t))_{t\geqslant 0}$. 
We then show that the semigroup is strongly continuous on the above function spaces (or possibly on certain subspaces), which justifies to define the infinitesimal generator. 
The generator can be considered as a type of realization of $\L_{\infty}$ on the underlying function space. We apply general results from semigroup theory to deduce unique 
solvability of the resolvent equation for the generator and derive a-priori estimates for the solution of the resolvent equation. For a more detailed outline we refer to 
Section \ref{sec:AssumptionsAndMainResult}. 

In the following we comment on some well-known results concerning the scalar Ornstein-Uhlenbeck operator
\begin{align}
  \label{equ:GeneralOrnsteinUhlenbeckOperator}
  \left[\L_{\mathrm{OU}}(t)v\right](x):=\frac{1}{2}\trace\left(Q D^2 v(x)\right)+\left\langle Sx,\nabla v(x)\right\rangle,\,x\in\R^d
\end{align}
in real-valued function spaces with covariance matrix $Q\in\R^{d,d}$, $Q=Q^T$, $Q>0$ and drift matrix $0\neq S\in\R^{d,d}$. The associated semigroup $(T_{\mathrm{OU}}(t))_{t\geqslant 0}$ 
is given by
\begin{align*}
  \left[T_{\mathrm{OU}}(t)v\right](x) = (4\pi)^{-\frac{d}{2}}\left(\det Q_t\right)^{-\frac{1}{2}}\int_{\R^d}e^{-\frac{1}{4}\left\langle Q_t^{-1}\xi,\xi\right\rangle}v(e^{tS}x-\xi)d\xi,\,t>0
\end{align*}
with covariance operators
\begin{align*}
  Q_t:=\int_{0}^{t}e^{\tau S}Qe^{\tau S^T}d\tau.
\end{align*}
Here we use the terminology from \cite{MauceriNoselli2009}. We give a detailed overview of results in unweighted function spaces, in which case a wide range of 
literature exists. A collection of most of these results including proofs can be found in \cite{LorenziBertoldi2007}. We also briefly discuss similarities and 
differences of our results to those in the literature.

\textbf{The space $L^p(\R^d,\R)$.} The Ornstein-Uhlenbeck semigroup $(T_{\mathrm{OU}}(t))_{t\geqslant 0}$ is a semigroup in $L^p(\R^d,\R)$ for every $1\leqslant p\leqslant\infty$, 
that is even strongly continuous on $L^p(\R^d,\R)$ but only for $1\leqslant p<\infty$, \cite[Lemma 3.1]{Metafune2001}. Our arguments to prove strong continuity in exponentially 
weighted spaces follow those from \cite{Metafune2001}. Therein, the author uses the explicit representation of the semigroup and shows strong continuity directly. This 
implies the existence of the infinitesimal generator $A_p$ by results from abstract semigroup theory, see \cite[Section II.1]{EngelNagel2000}. One problem that occurs and which 
is caused by the unbounded coefficients in the drift term, is to find an explicit representation for the domain of the infinitesimal generator $A_p$, which is the 
maximal realization of $\L_{\mathrm{OU}}$ in $L^p(\R^d,\R)$ for $1\leqslant p<\infty$. The maximal domain is given by 
\begin{align*}
  \D^p_{\mathrm{max}}(\L_{\mathrm{OU}})=\{v\in W^{2,p}(\R^d,\R)\mid \left\langle Sx,\nabla v(x)\right\rangle\in L^p(\R^d,\R)\} 
\end{align*}
for every $1<p<\infty$. This can either be shown directly, \cite[Theorem 1]{MetafunePallaraVespri2005}, or with the aid of the Dore-Venni theorem, 
\cite[Theorem 2.4]{PruessRhandiSchnaubelt2006}, but we also refer \cite[Theorem 4.1]{MetafunePruessRhandiSchnaubelt2002} and 
to \cite[Theorem 1]{BramantiCupiniLanconelliPriola2010}, \cite[Theorem 1]{BramantiCupiniLanconelliPriola2013} for global $L^p$-estimates concerning degenerate Ornstein-Uhlenbeck 
operators. In case $p=1$ one has the weaker result that $\D^1(\L_{\mathrm{OU}})$ is the closure of $C_{c}^{\infty}(\R^d,\R)$ with respect to the graph norm of $\L_{\mathrm{OU}}$
$\left\|\cdot\right\|_{\L_{\mathrm{OU}}}:=\left\|\cdot\right\|_{L^1}+\left\|\L_{\mathrm{OU}}\cdot\right\|_{L^1}$, i.e. 
$\D^1(\L_{\mathrm{OU}})=\overline{C_{\mathrm{c}}^{\infty}}^{\left\|\cdot\right\|_{\L_{\mathrm{OU}}}}$, see \cite[Proposition 3.2]{Metafune2001} and also \cite{LunardiMetafune2004}.
An argument, different from ours, which avoids solving the identification problem is presented in \cite[Theorem 2.2]{MetafunePallaraVespri2005}. Therein, the authors 
show that the operator $\L_0$ is closed on a suitable domain and deduce the existence of a strongly continuous semigroup. Their arguments are based on results from 
\cite{GilbargTrudinger2010} for scalar real-valued differential operators which is not directly applicable in the complex setting. This justifies why we follow the semigroup 
approach from \cite{Metafune2001}. 

It is also shown in \cite[Theorem 4.4, 4.7, 4.11 and 4.12]{Metafune2001} that the spectrum of the infinitesimal generator $A_p$ is given by 
\begin{align*}
  \sigma(A_p)=\left\{z\in\C\mid \Re z\leqslant -\frac{\trace(S)}{p}\right\}
\end{align*} 
for every $1\leqslant p<\infty$, if $\sigma(S)\subset\C_{+}$, $\sigma(S)\subset\C_{-}$ or $S$ symmetric and $Q$ and $S$ commute. Therefore, 
the semigroup $(T_{\mathrm{OU}}(t))_{t\geqslant 0}$ is not analytic on $L^p(\R^d,\R)$ for $1\leqslant p<\infty$, if $S\neq 0$, and the parabolic equation $v_t=\L_{\mathrm{OU}} v$ 
does not satisfy the standard parabolic regularity properties on $L^p(\R^d,\R)$. The results for the $L^p$-case can also be found in \cite[Section 9.4]{LorenziBertoldi2007}. 
Investigating the maximal domain, the spectrum and the analyticity in exponentially weighted spaces will be part of our future work.

\textbf{The space $L^p(\R^d,\R,\mu)$.} Under the additional assumption $\sigma(S)\subset\C_{-}$, which is very interesting from the point of view of diffusion processes, 
the Ornstein-Uhlenbeck semigroup $(T_{\mathrm{OU}}(t))_{t\geqslant 0}$ with invariant probability measure
\begin{align*}
  \mu(x)=\left(4\pi\right)^{-\frac{d}{2}}\left(\det Q_{\infty}\right)^{-\frac{1}{2}}
         e^{-\frac{1}{4}\left\langle Q_{\infty}^{-1}x,x\right\rangle}
\end{align*} 
is a semigroup of positive contractions on $L^p(\R^d,\R,\mu)$ for every $1\leqslant p\leqslant\infty$ and a $C_0$-semigroup for every $1\leqslant p<\infty$. The maximal domain is 
given by 
\begin{align*}
  \D^p_{\mathrm{max},\mu}(\L_{\mathrm{OU}}) = W^{2,p}(\R^d,\R,\mu) 
                                 = \{v\in L^p(\R^d,\R,\mu)\mid D_iv, D_jD_iv \in L^p(\R^d,\R,\mu),\,i,j=1,\ldots,d\}
\end{align*}
for every $1<p<\infty$, see \cite[Theorem 3.4]{MetafunePruessRhandiSchnaubelt2002} and \cite[Theorem 4.1]{Lunardi1997} for $p=2$. For $p=1$ we have the weaker result 
$\D^1_{\mu}(\L_{\mathrm{OU}})=\overline{C_{\mathrm{c}}^{\infty}}^{\left\|\cdot\right\|_{\L_{\mathrm{OU}}}}$. 
A major difference to the usual $L^p$-cases is that $(T_{\mathrm{OU}}(t))_{t\geqslant 0}$ is compact and analytic on $L^p(\R^d,\R,\mu)$ for every $1<p<\infty$, see \cite[Proposition 2.4]{MetafunePallaraPriola2002} and \cite[Section 2.2]{DaPratoLunardi1995} for $p=2$. The analyticity is caused by the fact that the probability measure is Gaussian. For our applications 
it is important that the weight function is non Gaussian and the corresponding semigroup is not analytic in exponentially weighted spaces. In \cite[Theorem 3.4]{MetafunePallaraPriola2002}, 
it is shown for $1<p<\infty$ that the spectrum of the infinitesimal generator $A_{p,\mu}$ of the Ornstein-Uhlenbeck semigroup $(T_{\mathrm{OU}}(t))_{t\geqslant 0}$ in $L^p(\R^d,\R,\mu)$ 
is a discrete set, independent of $p$ and given by 
\begin{align*}
  \sigma(A_{p,\mu})=\left\{\lambda=\sum_{i=1}^{r}n_i\lambda_i\mid n_i\in\N_{0},\,i=1,\ldots,r\right\},
\end{align*}
where $\lambda_1,\ldots,\lambda_r$ denote the distinct eigenvalues of $S$. This is in strong contrast to the $L^p$-case. The eigenvalues are semisimple if and only if $S$ is 
diagonalizable over $\C$. Moreover, the eigenfunctions of $A_{p,\mu}$ are polynomials of degree at most $\frac{\Re\lambda}{s(S)}$, \cite[Proposition 3.2]{MetafunePallaraPriola2002}, 
where $s(S):=\max_{\lambda\in\sigma(S)}\Re\lambda$ denotes the spectral bound of $S$. In case $p=1$ the situation changes drastically and the spectrum is given by 
$\sigma(A_{1,\mu})=\C_{-}\cup i\R$, \cite[Theorem 4.1]{MetafunePallaraPriola2002}. Results concerning the $L^p_{\mu}$-case can also be found in \cite[Section 9.3]{LorenziBertoldi2007}.

\textbf{The space $C_{\mathrm{b}}(\R^d,\R)$.} The Ornstein-Uhlenbeck semigroup $(T_{\mathrm{OU}}(t))_{t\geqslant 0}$ is a semigroup on $\Cb(\R^d,\R)$. To guarantee strong continuity of 
$(T_{\mathrm{OU}}(t))_{t\geqslant 0}$ one usually considers the semigroup on the closed subspace $\Cub(\R^d,\R)$ if the operator has constant or smooth bounded coefficients. But in case of 
the Ornstein-Uhlenbeck operator this space leads only to a weakly continuous semigroup, since the additional term $\left\langle Sx,\nabla v(x)\right\rangle$ has smooth but unbounded 
coefficients, and hence, the space $\Cub(\R^d,\R)$ is too large in order to guarantee strong continuity, see \cite[Section 6]{Cerrai1994}. One can show that 
$(T_{\mathrm{OU}}(t))_{t\geqslant 0}$ is a $C_0$-semigroup on the much smaller subspace $\Crub(\R^d,\R)$, see \cite[Lemma 3.2]{DaPratoLunardi1995} and \cite[Section I.6]{DaPratoZabczyk2002}. 
In Section \ref{sec:TheOrnsteinUhlenbeckSemigroupInCrub} we show that this result extends to the complex-valued Ornstein-Uhlenbeck semigroup associated to $\L_{\infty}$. 
The domain is characterized by 
\begin{align*}
  \D(\L_{\mathrm{OU}})=\bigg\{v\in\Crub(\R^d,\R)\cap \bigg(\bigcap_{p\geqslant 1}W^{2,p}_{\mathrm{loc}}(\R^d,\R)\bigg)\bigg| \L_{\mathrm{OU}}v\in\Crub(\R^d,\R)\bigg\},
\end{align*}
see \cite[Proposition 3.5]{DaPratoLunardi1995}. By \cite[Proposition 6.1, Theorem 6.2, Corollary 6.3]{Metafune2001} the spectrum of the (weak) infinitesimal generator 
$A_{\mathrm{b}}$ of the Ornstein-Uhlenbeck semigroup $(T_{\mathrm{OU}}(t))_{t\geqslant 0}$ in $\Cub(\R^d,\R)$ is given by
\begin{align*}
  \sigma(A_{\mathrm{b}})=\{\lambda\in\C\mid \Re\lambda\leqslant 0\},
\end{align*}
if $\sigma(S)\cap\C_{+}\neq\emptyset$, $\sigma(S)\subset\C_{-}$ or $\sigma(S)\cap i\R=\emptyset$. Moreover, it is proved in \cite[Lemma 3.3]{DaPratoLunardi1995} that 
$(T_{\mathrm{OU}}(t))_{t\geqslant 0}$ is not analytic on $\Crub(\R^d,\R)$ and hence it is neither analytic on $\Cb(\R^d,\R)$ nor on $\Cub(\R^d,\R)$. Results concerning the $\Cb$-theory 
can also be found in \cite[Section 9.2]{LorenziBertoldi2007}. We refer to \cite{DaPratoLunardi1995} for additional information on the Ornstein-Uhlenbeck operator in spaces 
of H\"older-continuous functions $\Cb^{\gamma}(\R^d,\R)$.

In this paper we analyze the perturbed operator $\L_{\infty}$ in exponentially weighted $L^p$- and $\Cb$-spaces namely $L^p_{\theta}(\R^d,\C^N)$ and $\Cbt(\R^d,\C^N)$, 
respectively. Obviously, the operator $\L_{\infty}$ generalizes the Ornstein-Uhlenbeck operator $\L_{\mathrm{OU}}$, since for $N=1$, $A=1$ and $B=0$ we have $\L_{\infty}=\L_0=\L_{\mathrm{OU}}$ 
with $Q=I_d$. The whole theory in this paper extends also to $Q\neq I_d$, in which case the diffusion term slightly changes into $\frac{1}{2}A\trace(Q D^2v(x))$. But for 
simplicity we restrict to the case $Q=I_d$.

\textbf{The spaces $L^p_{\theta}(\R^d,\R)$, $\Cbt(\R^d,\R)$, $\Cbt^{\gamma}(\R^d,\R)$.} There are only few papers in which the Ornstein-Uhlenbeck operator is analyzed in 
exponentially weighted spaces, see \cite{Addona2013} for $\Cbt(\R^d,\R)$ and $\Cbt^{\gamma}(\R^d,\R)$ and see \cite{HarboureTorreaViviani2000}, \cite{SobajimaYokota2013} for 
$L^p_{\theta}(\R^d,\R)$. In \cite{Addona2013} the author studies nonautonomous Ornstein-Uhlenbeck operators in weighted spaces of bounded continuous functions 
$\Cbt(\R^d,\R)$ and in weighted H\"older spaces $\Cb^{\gamma}(\R^d,\R)$. Therein, the author uses the weight functions $\theta(x)=\left(1+|x|^{2m}\right)^{-1}$, $m\in\N$, 
and $\theta(x)=e^{-(1-|x|^2)\rho}$, $\rho\in(0,\frac{1}{2}]$, and proves sharp uniform estimates for the spatial derivatives of the associated evolution operator,
that are used to prove optimal Schauder estimates for solutions of nonhomogeneous parabolic Cauchy problems. We emphasize that our resolvent estimates require stronger condition 
than those for the solvability of Cauchy problems. Moreover, the weight functions used in \cite{Addona2013} (and also those from \cite{HarboureTorreaViviani2000} and \cite{SobajimaYokota2013}) 
are all of Gaussian type, in which case the associated semigroup is known to be analytic. We recall that our class includes non-Gaussian weight functions. A further 
difference to \cite{Addona2013} is that we study systems of complex-valued Ornstein-Uhlenbeck operators. Such coupled operators appear in different fields of application 
when investigating rotating waves (as well as their interactions) in reaction diffusion equations, see \cite{BeynLorenz2008} and \cite{Otten2014}. 

Some numerical examples that exhibit such rotating wave solutions are the cubic-quintic complex Ginzburg-Landau equation, the $\lambda$-$\omega$-system and Barkley's model 
in \cite[Section 2.1, 10.3 and 10.6]{Otten2014}. We refer to \cite{DelmonteLorenzi2011} for real-valued weakly coupled systems and to \cite{Pascucci2005} for a general survey 
of applications concerning the Ornstein-Uhlenbeck operator. Scalar complex-valued Ornstein-Uhlenbeck processes also appear in applications when analyzing the so-called 
Chandler Wobble, see \cite{AratoBaranIspany1999} and the references therein. In \cite{AratoBaranIspany1999} the authors consider complex matrices $S\in\C^{2,2}$ that lead 
to a coupling in the drift term, which is in contrast to our case having a coupling in the diffusion term. In this paper we consider skew-symmetric matrices $S\in\R^{d,d}$. 
Scalar real-valued skew-symmetric Ornstein-Uhlenbeck processes have been analyzed for instance in \cite{MauceriNoselli2009}. Finally, we stress that our semigroup approach 
that follows \cite{Metafune2001}, is crucial to obtain uniqueness of the resolvent equation. In the scalar real-valued case one usually applies a maximum principle to obtain 
uniqueness, which seems not to be available in complex-valued case. 

We emphasize that the results from Section \ref{sec:AHeatKernelMatrixForTheComplexOrnsteinUhlenbeckOperator} and \ref{sec:SomePropertiesOfTheHeatKernel} 
are based on the PhD thesis \cite{Otten2014}. Section \ref{sec:TheOrnsteinUhlenbeckSemigroupInLp} is an extension of the results from \cite{Otten2014}, 
where only the unweighted case was considered, and Section \ref{sec:TheOrnsteinUhlenbeckSemigroupInCrub} is completely new.

\sect{Assumptions and outline of results}
\label{sec:AssumptionsAndMainResult}

Consider the differential operator
\begin{align}
  \label{equ:Linfty}
  \left[\L_{\infty}v\right](x) := A\triangle v(x) + \left\langle Sx,\nabla v(x)\right\rangle - Bv(x),\,x\in\R^d,
\end{align}
for some matrices $A,B\in\C^{N,N}$ and $S\in\R^{d,d}$. We define a heat kernel matrix of $\L_{\infty}$ in the sense of \cite[Section 1.2]{CalinChangFurutaniIwasaki2011}:

\begin{definition}\label{def:HeatKernel}
  A \begriff{heat kernel} (or a \begriff{fundamental solution}) \begriff{of $\L_{\infty}$} given by \eqref{equ:Linfty} is a function
  \begin{align*}
    H:\R^d\times\R^d\times]0,\infty[\rightarrow\C^{N,N},\,(x,\xi,t)\mapsto H(x,\xi,t)
  \end{align*}
  such that the following properties are satisfied
  \begin{align*}
    &H\in C^{2,2,1}(\R^d\times\R^d\times\R_+^*,\C^{N,N}),                                          \tag{H1}\label{equ:H1} \\
    &\frac{\partial}{\partial t}H(x,\xi,t)=\L_{\infty}H(x,\xi,t) &&\forall\,x,\xi\in\R^d,\,t>0,      \tag{H2}\label{equ:H2} \\
    &\lim_{t\downarrow 0}H(x,\xi,t)=\delta_{x}(\xi)I_N             &&\forall\,x,\xi\in\R^d.          \tag{H3}\label{equ:H3}
  \end{align*}
  Note that the differential operator $\L_{\infty}$ in \eqref{equ:H2} acts on $x$. Moreover, the convergence in \eqref{equ:H3} is meant in the sense of distributions 
  and $\delta_{x}(\xi)=\delta(x-\xi)$ denotes the Dirac delta function. If $N>1$ then $H$ is also called a \begriff{heat kernel matrix} (or \begriff{matrix fundamental solution}) 
  \begriff{of $\L_{\infty}$}.
\end{definition}

In order to determine a heat kernel matrix for the differential operator $\L_{\infty}$, we require the following
\begin{assumption} 
  \label{ass:Assumption1}
  Let $A,B\in\C^{N,N}$ and $S\in\R^{d,d}$ be such that
  \begin{align}
    &\text{$A$ and $B$ are simultaneously diagonalizable (with transformation matrix $Y\in\C^{N,N}$)}         \tag{A1}\label{cond:A8B} \\
    &\Re\sigma(A)>0,                                                           \tag{A2}\label{cond:A2} \\
    &\text{$S$ is skew-symmetric}.                                             \tag{A3}\label{cond:A5} 
  \end{align}
\end{assumption}

Condition \eqref{cond:A8B} is a system condition and ensures that all results for scalar equations can be extended to system cases. It is motivated by the fact that transforming 
a scalar complex-valued equation into a $2$-dimensional real-valued system, the corresponding (real) matrices $A$ and $B$ are always simultaneously diagonalizable 
(over $\C$). The positivity assumption \eqref{cond:A2} guarantees that the diffusion part $A\triangle$ is an elliptic operator and requires that all eigenvalues 
$\lambda$ of $A$ are contained in the open right half-plane $\C_{+}:=\{\lambda\in\C\mid\Re\lambda>0\}$, where $\sigma(A)$ denotes the spectrum of $A$. Assumption 
\eqref{cond:A5} implies that the drift term contains only angular derivatives, which is crucial when investigating rotating patterns in reaction diffusion systems.

For a matrix $C\in\K^{N,N}$ we denote by $\sigma(C)$ the \begriff{spectrum of $C$}, by $\rho(C):=\max_{\lambda\in\sigma(C)}\left|\lambda\right|$ the \begriff{spectral radius of $C$} 
and by $s(C):=\max_{\lambda\in\sigma(C)}\Re\lambda$ the \begriff{spectral abscissa} (or \begriff{spectral bound}) \begriff{of $C$}. Using this notation, we define the constants
\begin{equation}
  \begin{aligned}
  \amin  :=& \left(\rho\left(A^{-1}\right)\right)^{-1}, &&\azero := -s(-A), \\
  \amax  :=& \rho(A),                                   &&\bzero := -s(-B).
  \end{aligned}
  \label{equ:aminamaxazerobzero}
\end{equation}
These quantities appear in the heat kernel estimates below. Moreover, for an invertible matrix $C\in\K^{N,N}$ we denote by $\kappa:=\cond(C):=|C||C^{-1}|$ the 
\begriff{condition number of $C$}.

Our main tool for investigating the semigroup in exponentially weighted function spaces is the choice of the weight function. The first part of the 
definition comes originally from \cite[Def. 3.1]{ZelikMielke2009}:
\begin{definition}\label{def:WeightFunctionOfExponentialGrowthRate} 
  \enum{1} A function $\theta\in C(\R^d,\R)$ is called a \begriff{weight function of exponential growth rate $\eta\geqslant 0$} provided that 
  \begin{align}
    &\theta(x)>0\;\forall\,x\in\R^d,                                                                        \tag{W1}\label{equ:WeightFunctionProp1} \\
    &\exists\,C_{\theta}>0:\;\theta(x+y)\leqslant C_{\theta}\theta(x)e^{\eta|y|}\;\forall\,x,y\in\R^d.      \tag{W2}\label{equ:WeightFunctionProp2}
  \end{align}
  \enum{2} A weight function $\theta\in C(\R^d,\R)$ of exponential growth rate $\eta\geqslant 0$ is called \begriff{radial} provided that 
  \begin{align}
    \exists\,\phi:[0,\infty[\rightarrow\R:\;\theta(x)=\phi\left(\left|x\right|\right)\;\forall\,x\in\R^d.   \tag{W3}\label{equ:WeightFunctionProp3}
  \end{align}
\end{definition}

Note that skew-symmetry of $S$ from \eqref{cond:A5} and condition \eqref{equ:WeightFunctionProp3} imply the important relation $\theta(e^{tS}x)=\theta(x)$ for any $t\in\R$ and $x\in\R^d$. 
For the proof of strong continuity the following two properties are essential:
\begin{align}
  &\lim_{|\psi|\to 0}\sup_{x\in\R^d}\left|\frac{\theta(x+\psi)-\theta(x)}{\theta(x)}\right|=0, \tag{W4}\label{equ:WeightFunctionProp4} \\
  &\lim_{t\to 0}\sup_{x\in\R^d}\left|\frac{\theta(e^{tS}x)-\theta(x)}{\theta(x)}\right|=0.     \tag{W5}\label{equ:WeightFunctionProp5}
\end{align}
Obviously, \eqref{equ:WeightFunctionProp3} implies \eqref{equ:WeightFunctionProp5}, since $S$ is skew-symmetric due to \eqref{cond:A5}. Moreover, if we assume 
\eqref{equ:WeightFunctionProp1} and \eqref{equ:WeightFunctionProp2} then condition \eqref{equ:WeightFunctionProp4} follows directly if $C_{\theta}=1$. 
A convenient sufficient condition to guarantee \eqref{equ:WeightFunctionProp4} in case $\theta\in C^1(\R^d,\R)$ is
\begin{align}
  \exists\,C>0:\;\left|\nabla\theta(x)\right|\leqslant C\theta(x)\;\forall\,x\in\R^d. \tag{W6}\label{equ:WeightFunctionProp6}
\end{align}
A further property that is necessary to derive pointwise estimates in exponentially weighted spaces of bounded continuous functions is
\begin{align}
  \exists\,\nu\in\R\;\exists\,\tilde{C}_{\theta}>0:\;\theta(x)\geqslant \tilde{C}_{\theta}e^{\nu|x|}\;\forall\,x\in\R^d.     \tag{W7}\label{equ:WeightFunctionProp7}
\end{align}

Standard examples are 
\begin{align*}
  \theta_1(x)=\exp\left(-\mu|x|\right) \quad\text{and}\quad \theta_2(x)=\cosh\left(\mu|x|\right)
\end{align*}
as well as their smooth analogs 
\begin{align*}
  \theta_3(x)=\exp\left(-\mu\sqrt{\left|x\right|^2+1}\right)\quad\text{and}\quad\theta_4(x)=\cosh\left(\mu\sqrt{\left|x\right|^2+1}\right)
\end{align*}
for $\mu\in\R$ and $x\in\R^d$. Obviously, all these functions are radial weight functions of exponential growth rate $\eta=|\mu|$ with $C_{\theta}=1$. 
In particular, they satisfy \eqref{equ:WeightFunctionProp4} and \eqref{equ:WeightFunctionProp5}. The $C^1$-weight functions $\theta_3$ and $\theta_4$ 
even satisfy \eqref{equ:WeightFunctionProp6}. 

The examples show that the conditions \eqref{equ:WeightFunctionProp1}--\eqref{equ:WeightFunctionProp2} allow both increasing and decreasing weight functions. 
In some cases we explicitly need lower exponential bounds of the weight function as in \eqref{equ:WeightFunctionProp7}, see Section \ref{sec:TheOrnsteinUhlenbeckSemigroupInCrub}.

We now introduce the exponentially weighted Lebesgue and Sobolev spaces via
\begin{align*}
  L_{\theta}^{p}(\R^d,\C^N)   :=& \{v\in L^1_{\mathrm{loc}}(\R^d,\C^N)\mid \left\|\theta v\right\|_{L^p(\R^d,\C^N)}<\infty\}, \\
  W_{\theta}^{k,p}(\R^d,\C^N) :=& \{v\in L^p_{\theta}(\R^d,\C^N)\mid D^{\beta}v\in L^p_{\theta}(\R^d,\C^N)\;\forall\,\left|\beta\right|\leqslant k\}
\end{align*}
with norms
\begin{align*}
  \left\|v\right\|_{L^p_{\theta}(\R^d,\C^N)} :=& \left\|\theta v\right\|_{L^p(\R^d,\C^N)} := \left(\int_{\R^d}\left|\theta(x)v(x)\right|^p dx\right)^{\frac{1}{p}},\\
  \left\|v\right\|_{W^{k,p}_{\theta}(\R^d,\C^N)} :=& \bigg(\sum_{0\leqslant |\beta|\leqslant k}\left\|D^{\beta}v\right\|_{L^p_{\theta}(\R^d,\C^N)}^p\bigg)^{\frac{1}{p}}
\end{align*}
for every $1\leqslant p<\infty$, $k\in\N_0$ and multi-index $\beta\in\N_0^d$. In the proofs we sometimes abbreviate $\left\|\cdot\right\|_{L^p_{\theta}(\R^d,\C^N)}$ 
by $\left\|\cdot\right\|_{L^p_{\theta}}$. Similarly, we define the exponentially weighted space of bounded continuous functions via
\begin{align*}
  \Cbt(\R^d,\C^N)     :=& \left\{v\in C(\R^d,\C^N)\mid \left\|\theta v\right\|_{\Cb(\R^d,\C^N)}<\infty\right\}, \\
  \Cbt^k(\R^d,\C^N)   :=& \left\{v\in\Cbt(\R^d,\C^N)\mid D^{\beta}v\in\Cbt(\R^d,\C^N)\;\forall\,|\beta|\leqslant k\right\}
\end{align*}
with norms
\begin{align*}
  \left\|v\right\|_{\Cbt(\R^d,\C^N)}   :=& \left\|\theta v\right\|_{\Cb(\R^d,\C^N)} := \sup_{x\in\R^d}\left|\theta(x)v(x)\right|, \\
  \left\|v\right\|_{\Cbt^k(\R^d,\C^N)} :=& \max_{0\leqslant |\beta|\leqslant k}\left\|D^{\beta}v\right\|_{\Cbt(\R^d,\C^N)}
\end{align*}
for every $k\in\N_0$ and multi-index $\beta\in\N_0^d$. In the unweighted case with $\theta\equiv 1$ we omit the subindex $\theta$ at these function spaces and their 
corresponding norms. For the $\Cb$-semigroup theory, we additionally define the subspaces
\begin{align*}
  \Cub(\R^d,\C^N)  :=& \left\{v\in\Cb(\R^d,\C^N)\mid v\text{ in uniformly continuous on $\R^d$}\right\}, \\
  \Crub(\R^d,\C^N) :=& \,\big\{v\in\Cub(\R^d,\C^N)\mid \lim_{t\to 0}\sup_{x\in\R^d}\left|v(e^{-tS}x)-v(x)\right|=0\big\}.
\end{align*}
Note that $\Cub(\R^d,\C^N)$ and $\Crub(\R^d,\C^N)$, suggested in \cite{DaPratoLunardi1995}, are closed subspaces of $\Cb(\R^d,\C^N)$.

In Section \ref{sec:AHeatKernelMatrixForTheComplexOrnsteinUhlenbeckOperator} we investigate the heat kernel of $\L_{\infty}$. Assuming \eqref{cond:A8B}--\eqref{cond:A5} 
we show in Theorem \ref{thm:HeatKernel} that 
\begin{align}
  H(x,\xi,t)=(4\pi t A)^{-\frac{d}{2}}\exp\left(-Bt-(4tA)^{-1}\left|e^{tS}x-\xi\right|^2\right),\;x,\xi\in\R^d,\,t>0
\end{align}
is a heat kernel matrix of $\L_{\infty}$. 

In Section \ref{sec:SomePropertiesOfTheHeatKernel} we prove several heat kernel estimates which are essential not only for the semigroup theory in the next 
two sections, but also to solve the identification problem for $\L_{\infty}$ and to characterize its maximal domain, \cite{Otten2014}. The bounds of these 
estimates are sharp and depend on the quantities from \eqref{equ:aminamaxazerobzero}.

In Section \ref{sec:TheOrnsteinUhlenbeckSemigroupInLp} we study the Ornstein-Uhlenbeck semigroup in exponentially weighted complex-valued $L^p$-spaces 
$(L^p_{\theta}(\R^d,\C^N),\left\|\cdot\right\|_{L^p_{\theta}})$ for $1\leqslant p\leqslant\infty$. Via the heat kernel matrix of $\L_{\infty}$ 
the family of mappings $T(t):L^p_{\theta}\rightarrow L^p_{\theta}$, $t\geqslant 0$, is defined by
\begin{align}
  \left[T(t)v\right](x):= \begin{cases}
                              \int_{\R^d}H(x,\xi,t)v(\xi)d\xi &\text{, }t>0 \\
                              v(x) &\text{, }t=0
                            \end{cases}\quad ,x\in\R^d.
  \label{equ:OrnsteinUhlenbeckSemigroup}
\end{align}
We show in Theorem \ref{thm:OrnsteinUhlenbeckLpSemigroup} under the assumptions \eqref{cond:A8B}--\eqref{cond:A5} that $\left(T(t)\right)_{t\geqslant 0}$ 
generates a semigroup on $L^p_{\theta}(\R^d,\C^N)$ for every $1\leqslant p\leqslant\infty$ and for every weight functions satisfying 
\eqref{equ:WeightFunctionProp1}--\eqref{equ:WeightFunctionProp3}. The semigroup is even strongly continuous on $L^p_{\theta}(\R^d,\C^N)$ but only for 
$1\leqslant p<\infty$, provided that the weight function additionally satisfies \eqref{equ:WeightFunctionProp4}. This result is proved in Theorem 
\ref{thm:OrnsteinUhlenbeckLpStrongContinuity}. Introducing the infinitesimal generator $(A_{p,\theta},\D(A_{p,\theta}))$, which can be considered as the 
maximal realization of $\L_{\infty}$ in $L^p_{\theta}(\R^d,\C^N)$, we apply results from abstract semigroup theory from \cite{EngelNagel2000} and deduce in Corollary 
\ref{cor:OrnsteinUhlenbeckLpSolvabilityUniqueness} that the resolvent equation $(\lambda I-A_{p,\theta})v=g$ has a unique solution $v_{\star}\in\D(A_{p,\theta})$. 
In Theorem \ref{thm:OrnsteinUhlenbeckAPrioriEstimatesInLpConstantCoefficients} we finally prove exponentially weighted resolvent estimates. In particular, 
the estimates imply 
\begin{align*}
  \D(A_{p,\theta})\subseteq W^{1,p}_{\theta}(\R^d,\C^N)\quad\text{for}\quad 1\leqslant p<\infty.
\end{align*}

In Section \ref{sec:TheOrnsteinUhlenbeckSemigroupInCrub} we extend the results from Section \ref{sec:TheOrnsteinUhlenbeckSemigroupInLp} to the space of bounded 
continuous functions. For this purpose we consider the family of mappings $(T(t))_{t\geqslant 0}$ from \eqref{equ:OrnsteinUhlenbeckSemigroup} on the (complex-valued) 
Banach space $(\Cb(\R^d,\C^N),\left\|\cdot\right\|_{\Cb})$. Assuming \eqref{cond:A8B}--\eqref{cond:A5} we prove in Theorem \ref{thm:OrnsteinUhlenbeckLpSemigroup} 
that $\left(T(t)\right)_{t\geqslant 0}$ generates a semigroup on $\Cb(\R^d,\C^N)$ as well as on its closed subspaces $\Cub(\R^d,\C^N)$ and $\Crub(\R^d,\C^N)$. 
In Theorem \ref{thm:OrnsteinUhlenbeckCrubStrongContinuity} we show that $(T(t))_{t\geqslant 0}$ is strongly continuous but only on the subspace $\Crub(\R^d,\C^N)$. 
Indeed, the semigroup $\left(T(t)\right)_{t\geqslant 0}$ is discontinuous on $\Cb(\R^d,\C^N)$, weakly continuous on $\Cub(\R^d,\C^N)$ and strongly continuous on 
$\Crub(\R^d,\C^N)$. For the scalar real-valued case this is well-known from \cite{Cerrai1994} and \cite{DaPratoLunardi1995}, where the Ornstein-Uhlenbeck semigroup 
is studied in the unweighted space of bounded continuous functions $\Cb(\R^d,\R)$. Again, introducing the infinitesimal generator $(A_{\mathrm{b}},\D(A_{\mathrm{b}}))$, 
we deduce in Corollary \ref{cor:OrnsteinUhlenbeckCrubSolvabilityUniqueness} that the resolvent equation $(\lambda I-A_{\mathrm{b}})v=g$ admits a unique solution 
$v_{\star}\in\D(A_{\mathrm{b}})$. In Theorem \ref{thm:OrnsteinUhlenbeckAPrioriEstimatesInCrub} we finally prove exponentially weighted resolvent estimates in 
$\Cbt(\R^d,\C^N)$ and similar to the $L^p$-case the estimates imply 
\begin{align*}
  \D(A_{\mathrm{b}})\subseteq\Crub(\R^d,\C^N)\cap \Cub^1(\R^d,\C^N).
\end{align*}
It remains an open problem whether in addition to the resolvent estimates the semigroup theory extends to $\Cbt(\R^d,\C^N)$.

%
%
\sect{A heat kernel matrix for the complex Ornstein-Uhlenbeck operator}
\label{sec:AHeatKernelMatrixForTheComplexOrnsteinUhlenbeckOperator}

The following theorem provides a heat kernel matrix of $\L_{\infty}$ from \eqref{equ:Linfty}, which enables us to introduce the corresponding 
semigroup in Section \ref{sec:TheOrnsteinUhlenbeckSemigroupInLp}. It is motivated by the formal derivation of this kernel in the scalar real-valued 
case from \cite{Aarao2007}, \cite{Beals1999} and \cite[Section 13.2]{CalinChangFurutaniIwasaki2011}. The extension to the complex-valued system case 
is based on \cite[Theorem 4.2-4.4]{Otten2014}.

\begin{theorem}[Heat kernel for the Ornstein-Uhlenbeck operator]\label{thm:HeatKernel}
  Let the assumptions \eqref{cond:A8B}--\eqref{cond:A5} be satisfied, then the function 
  $H:\R^d\times\R^d\times]0,\infty[\rightarrow\C^{N,N}$ defined by
  \begin{align}
    \label{equ:HeatKernel}
    H(x,\xi,t)=(4\pi t A)^{-\frac{d}{2}}\exp\left(-Bt-(4tA)^{-1}\left|e^{tS}x-\xi\right|^2\right)
  \end{align}
  is a heat kernel of $\L_{\infty}$ given by
  \begin{align}
    \label{equ:Linfty2}
    \left[\L_{\infty}v\right](x):=A\triangle v(x)+\left\langle Sx,\nabla v(x)\right\rangle-Bv(x).
  \end{align}
\end{theorem}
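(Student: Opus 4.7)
The plan is to use the hypothesis \eqref{cond:A8B} to reduce the matrix-valued verification to $N$ independent scalar complex heat-kernel identities, and then assemble the matrix kernel via functional calculus with the common diagonalizer $Y$. More precisely, write $Y^{-1}AY=\Lambda_A=\operatorname{diag}(\alpha_1,\ldots,\alpha_N)$ and $Y^{-1}BY=\Lambda_B=\operatorname{diag}(\beta_1,\ldots,\beta_N)$. Since $A$ and $B$ commute (being simultaneously diagonalizable), the matrix exponential in \eqref{equ:HeatKernel} makes sense via the holomorphic functional calculus and factors as
\begin{align*}
  H(x,\xi,t)=Y\operatorname{diag}\bigl(h_1(x,\xi,t),\ldots,h_N(x,\xi,t)\bigr)Y^{-1},
\end{align*}
with $h_j(x,\xi,t):=(4\pi t\alpha_j)^{-d/2}\exp\bigl(-\beta_j t-(4t\alpha_j)^{-1}|e^{tS}x-\xi|^2\bigr)$. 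Here the fractional power $(4\pi t\alpha_j)^{-d/2}$ is well-defined by \eqref{cond:A2} since $\Re\alpha_j>0$. Once each $h_j$ is shown to be a scalar heat kernel for the scalar operator $\alpha_j\triangle+\langle Sx,\nabla\cdot\rangle-\beta_j$, property (H1)--(H3) for the full matrix $H$ follows by conjugation with $Y$.

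\textbf{Step 1 (property \eqref{equ:H1}).} Smoothness of $h_j$ on $\R^d\times\R^d\times\,]0,\infty[$ is immediate from the explicit formula, since $\Re\alpha_j>0$ makes the Gaussian factor holomorphic in all its arguments for $t>0$.

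\textbf{Step 2 (property \eqref{equ:H2}).} Fix $j$ and drop subscripts. Set $y:=e^{tS}x-\xi$ and $\log h=-\tfrac{d}{2}\log(4\pi t\alpha)-\beta t-(4t\alpha)^{-1}|y|^2$. First compute $\partial_t|y|^2=2\langle y,Se^{tS}x\rangle$ and exploit \eqref{cond:A5} through the identity $\langle Se^{tS}x,e^{tS}x\rangle=0$ (since $S^T=-S$), which yields $\partial_t|y|^2=-2\langle Se^{tS}x,\xi\rangle$. Next, from $\nabla_x y=e^{tS}$ and $(e^{tS})^T=e^{-tS}$, one obtains $\nabla_x\log h=-\tfrac{1}{2t\alpha}e^{-tS}y$, and then
\begin{align*}
  \langle Sx,\nabla_x\log h\rangle=-\tfrac{1}{2t\alpha}\langle Se^{tS}x,y\rangle=\tfrac{1}{2t\alpha}\langle Se^{tS}x,\xi\rangle,
\end{align*}
again by skew-symmetry. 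These two contributions exactly cancel the drift portion of $\partial_t\log h$. For the Laplacian term, orthogonality of $e^{-tS}$ gives $|\nabla_x\log h|^2=(4t^2\alpha^2)^{-1}|y|^2$, and $\triangle_x\log h=-d/(2t\alpha)$. Putting these together, the identity $\partial_t h=\alpha\triangle h+\langle Sx,\nabla h\rangle-\beta h$ reduces to the pointwise equality
\begin{align*}
  -\tfrac{d}{2t}+\tfrac{1}{4t^2\alpha}|y|^2=\alpha\bigl(\triangle_x\log h+|\nabla_x\log h|^2\bigr)=-\tfrac{d}{2t}+\tfrac{1}{4t^2\alpha}|y|^2,
\end{align*}
which closes \eqref{equ:H2} for $h_j$; conjugating by $Y$ yields \eqref{equ:H2} for $H$.

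\textbf{Step 3 (property \eqref{equ:H3}).} Since $e^{tS}x\to x$ as $t\downarrow 0$ uniformly on compacts and $\Re\alpha_j>0$, each $h_j$ is a complex Gaussian whose real part of the covariance $(4t\alpha_j)^{-1}$ has a positive real part growing like $1/t$. A standard Fourier-transform argument (or a direct change of variables $\zeta=(x-\xi)/\sqrt{t}$ against a test function) shows $h_j(x,\cdot,t)\to\delta_x$ in $\mathcal{S}'(\R^d)$; the prefactor $e^{-\beta_j t}\to 1$ and the perturbation $e^{tS}x-x=O(t)$ do not affect the distributional limit. Multiplying by $Y\operatorname{diag}(\cdot)Y^{-1}$ turns these scalar limits into $\delta_x(\xi)I_N$.

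\textbf{Main obstacle.} The delicate point is Step 2: several terms must cancel in a precise way, and the key cancellation relies on interpreting $e^{tS}$ as an orthogonal matrix on $\R^d$ (via \eqref{cond:A5}) while simultaneously keeping the eigenvalue $\alpha_j\in\C_+$ in the denominator. A secondary, more bureaucratic difficulty is verifying the functional-calculus interpretation: one must check that $(4\pi tA)^{-d/2}$ commutes with $\exp(-Bt-(4tA)^{-1}|\cdot|^2)$, which ultimately rests on \eqref{cond:A8B} giving simultaneous diagonalizability of $A$ and $B$. Once these points are settled, the reassembly $H=Y\operatorname{diag}(h_j)Y^{-1}$ transports \eqref{equ:H1}--\eqref{equ:H3} from the scalar to the matrix setting without further work.
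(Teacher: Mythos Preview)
Your proof is correct, and the reduction to the scalar case via simultaneous diagonalization is exactly what the paper does. Where you diverge from the paper is in Step~2: the paper does not verify \eqref{equ:H2} by direct computation of $\partial_t\log h$ against $\alpha\triangle_x\log h+|\nabla_x\log h|^2+\langle Sx,\nabla_x\log h\rangle-\beta$ as you do. Instead, the paper \emph{derives} the kernel from a Gaussian ansatz $H(x,\xi,t)=\varphi(t)\exp\bigl(-\tfrac12\langle N(t)(x,\xi)^T,(x,\xi)^T\rangle\bigr)$, inserts it into \eqref{equ:H2}, and obtains a matrix Riccati equation for $N(t)$ together with a linear ODE for $\varphi(t)$; solving those and fixing the free constant via the normalization $\lim_{t\downarrow 0}\int H\,d\xi=1$ produces \eqref{equ:HeatKernel}, so that \eqref{equ:H2} holds by construction. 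The paper then checks \eqref{equ:H3} by an explicit polar-coordinate computation using the integral $\int_0^\infty r^{n-1}e^{-zr^2}\,dr=\tfrac12 z^{-n/2}\,\Gamma(n/2)^{-1}$ for $\Re z>0$, which is slightly more explicit than your Fourier/scaling sketch but amounts to the same thing. Your route is shorter and more elementary once the formula is handed to you; the paper's route is constructive and explains \emph{why} the kernel looks the way it does (in particular, why the argument is $|e^{tS}x-\xi|^2$ rather than $|x-\xi|^2$). Your logarithmic-derivative computation is clean and correctly isolates the two places where skew-symmetry of $S$ is used: the cancellation $\langle Se^{tS}x,e^{tS}x\rangle=0$ and the orthogonality $|e^{-tS}y|=|y|$.
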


\begin{proof}
  It is sufficient to prove the result for the complex scalar case, where we write $\alpha$ and $\delta$ instead of $A$ and $B$, respectively. For diagonal matrices $A,B\in\C^{N,N}$ 
  the components of $\L_{\infty}$ are decoupled and we obtain a heat kernel of the $k$-th component of $\L_{\infty}$ from the scalar case. In the general case, where $A$ and $B$ are 
  simultaneously diagonalizable with transformation matrix $Y\in\C^{N,N}$, we introduce the diagonalized operator $\tilde{\L}_{\infty}:=Y^{-1}\L_{\infty}Y$, for which we have a 
  heat kernel $\tilde{H}(x,\xi,t)$ from the diagonal case. We then finally deduce, that $H(x,\xi,t):=Y\tilde{H}(x,\xi,t)Y^{-1}$ is a heat kernel for $\L_{\infty}$.

  In the scalar case we must show that
  \begin{align}
    \label{equ:ScalarHeatKernel}
    H(x,\xi,t)=\left(4\pi\alpha t\right)^{-\frac{d}{2}}\exp\left(-\delta t-\left(4\alpha t\right)^{-1}\left|e^{tS}x-\xi\right|^2\right)
  \end{align}
  is a heat kernel of $\L_{\infty}$ given by
  \begin{align}
    \label{equ:LinftyScalar}
    \left[\L_{\infty}v\right](x):=\alpha\triangle v(x)+\left\langle Sx,\nabla v(x)\right\rangle-\delta v(x).
  \end{align}
  Before we verify that the heat kernel from \eqref{equ:ScalarHeatKernel} satisfies the properties \eqref{equ:H1}--\eqref{equ:H3} 
  we discuss a formal derivation of this kernel. To compute the heat kernel \eqref{equ:ScalarHeatKernel} of \eqref{equ:LinftyScalar} 
  we generalize the approach from \cite{Aarao2007} and \cite{Beals1999} to the complex case and use the complexified ansatz
  \begin{align}
    \label{equ:ansatz}
    H(x,\xi,t)=
    \varphi(t)\cdot\exp\left(-\frac{1}{2}\left\langle M(t)\left(\begin{array}{c}x\\\xi\end{array}\right), 
    \left(\begin{array}{c}x\\\xi\end{array}\right)\right\rangle\right),
  \end{align}
  where
  \begin{align*}
    \varphi:\R_+^*\rightarrow\C\text{, }t\mapsto\varphi(t)\quad\quad\text{and}\quad\quad
    M:\R_+^*\rightarrow\C^{2d,2d}\text{, }t\mapsto M(t)
  \end{align*}
  have to be determined and $\left\langle u,v\right\rangle:=\overline{u}^Tv$ denotes the Euclidean inner product on $\C^{2d}$.
  Note that it is sufficient to determine the symmetric part of the complex-valued matrix $M$ which we denote by $N$, i.e.
  \begin{align*}
    &N:\R_+^*\rightarrow\C^{2d\times 2d}\text{, }t\mapsto N(t):=\frac{1}{2}\left(M(t)+M^T(t)\right)
                                                               =\left(\begin{array}{cc}A(t)&B(t)\\C(t)&D(t)\end{array}\right)\text{,} \\
    &A,B,C,D:\R_+^*\rightarrow\C^{d\times d}\text{, }t\mapsto A(t),B(t),C(t),D(t).
  \end{align*}
  We emphasize that $N$ is a symmetric but in general not a Hermitian matrix. In particular, $A$ and $D$ are symmetric and $B^T=C$. For $x,\xi\in\R^d$ 
  we obtain from \eqref{equ:ansatz} and the definition of $N$
  \begin{align*}
            H(x,\xi,t)
    &= \varphi(t)\cdot\exp\left(-\frac{1}{2}\left\langle N(t)\left(\begin{array}{c}x\\\xi\end{array}\right), 
             \left(\begin{array}{c}x\\\xi\end{array}\right)\right\rangle\right).
  \end{align*}
  Since the heat kernel must satisfy \eqref{equ:H2} we introduce the extended matrices
  \begin{align*}
    \tilde{P}=\left(\begin{array}{cc}I_d &0\\0&0\end{array}\right)\text{, }
    \tilde{S}=\left(\begin{array}{cc}S &0\\0&0\end{array}\right)\in\R^{2d,2d}
  \end{align*}
  and obtain from the general Leibniz rule, the chain rule and the symmetry of $N$
  \pagebreak
  \begin{align*}
       0
    =& H(x,\xi,t)\left[\frac{\varphi_t(t)}{\varphi(t)}+\alpha\,\mathrm{tr}\left(\,\overline{A}(t)\right)+\delta\right. \\
     & \left.+\left\langle\left(-\frac{1}{2}N_t(t)-\overline{\alpha}N(t)\tilde{P}N(t)
       +\frac{1}{2}\tilde{S}^TN(t)+\frac{1}{2}N(t)\tilde{S}\right)\left(\begin{array}{c}x\\\xi\end{array}\right),
       \left(\begin{array}{c}x\\\xi\end{array}\right)\right\rangle\right].
  \end{align*}
  Thus, the kernel satisfies \eqref{equ:H2} if the following differential equations hold
  \begin{align}
    \label{equ:phiequ}
    \varphi_t(t)&=-\left(\alpha\,\mathrm{tr}\left(\overline{A}(t)\right)+\delta\right)\varphi(t) &&\text{, }t>0, \\
    \label{equ:Nequ}
    N_t(t)&=-2\overline{\alpha}N(t)\tilde{P}N(t)+\tilde{S}^TN(t)+N(t)\tilde{S} &&\text{, }t>0.
  \end{align}
  Since \eqref{equ:phiequ} depends on the solution of \eqref{equ:Nequ}, we will first solve the matrix-Riccati equation \eqref{equ:Nequ}, see 
  \cite[Section 3.1]{Abou-KandilFreilingIonescuJank2003}. It is obvious that the solutions of \eqref{equ:phiequ} and \eqref{equ:Nequ} are not 
  unique but one can select appropriate initial values, see \cite{Aarao2007} and \cite{Beals1999}.
  Using similar techniques and by some computations one can deduce that, \cite{Otten2014},
  \begin{align}
    N(t) = \frac{1}{2\overline{\alpha}t}\left(\begin{array}{cc}I_d &-\exp(tS^T)\\-\exp(tS) &I_d\end{array}\right). \label{equ:NSol}
  \end{align}
  Here, one has to keep in mind that \eqref{equ:H3} should be satisfied. 
  Thus, $\mathrm{tr}\left(\overline{A}(t)\right)=\frac{d}{2\alpha t}$ and \eqref{equ:phiequ} can be written as
  \begin{align*}
      \varphi_t(t)
    = -\left(\alpha\,\trace\left(\overline{A}(t)\right)+\delta\right)\varphi(t)
    = -\left(\frac{d}{2t}+\delta\right)\varphi(t).
  \end{align*}
  Hence, the general solution of \eqref{equ:phiequ} is given by
  \begin{align}
      \varphi(t) 
    = C\exp\left(-\int\left(\frac{d}{2t}+\delta\right)dt\right)
    = C\exp\left(-\frac{d}{2}\ln(t)-\delta t\right)
    = C t^{-\frac{d}{2}}e^{-\delta t},
    \label{equ:phiSol}
  \end{align}
  where $C\in\C$. Below we choose $C\in\C$ such that the normalization condition 
  \begin{align}
    \lim_{t\downarrow 0}\int_{\R^d}H(x,\xi,t)d\xi=1\quad\forall\,x\in\R^d
    \label{equ:normalization}
  \end{align}
  holds. First note that from
  \begin{align*}
      \left\langle \frac{1}{2\overline{\alpha}t}\left(\begin{array}{cc}I_d &-\exp(tS^T)\\-\exp(tS) &I_d\end{array}\right)\left(\begin{array}{c}x\\\xi\end{array}\right),
      \left(\begin{array}{c}x\\\xi\end{array}\right)\right\rangle
    = \frac{1}{2\alpha t}\left|e^{tS}x-\xi\right|^2
  \end{align*}
  we obtain
  \begin{align*}
       H(x,\xi,t)
    =& Ct^{-\frac{d}{2}}e^{-\delta t-\frac{1}{4\alpha t}\left|e^{tS}x-\xi\right|^2}.
  \end{align*}
  Now, integrating over $\R^d$ w.r.t. $\xi$, we obtain from the transformation theorem and assumption \eqref{cond:A2}
  \begin{align*}
     & \int_{\R^d}H(x,\xi,t)d\xi
    =  Ct^{-\frac{d}{2}}e^{-\delta t}\int_{\R^d}e^{-\frac{1}{4\alpha t}\left|e^{tS}x-\xi\right|^2}d\xi \\
    =&  Ct^{-\frac{d}{2}}e^{-\delta t}\int_{\R^d}e^{-\frac{1}{4\alpha t}\left|x-\psi\right|^2}d\psi
    =  Ct^{-\frac{d}{2}}e^{-\delta t}\prod_{j=1}^{d}\int_{-\infty}^{\infty}e^{-\frac{1}{4\alpha t}x_j^2}dx_j \\
    =&  Ct^{-\frac{d}{2}}e^{-\delta t}\left(4\pi\alpha t\right)^{\frac{d}{2}}
    =  C\left(4\pi\alpha\right)^{\frac{d}{2}}e^{-\delta t}\overset{t\to 0}{\rightarrow} C\left(4\pi\alpha\right)^{\frac{d}{2}}\overset{!}{=}1.
  \end{align*}
  Hence, we choose $C=\left(4\pi\alpha\right)^{-\frac{d}{2}}$ such that \eqref{equ:normalization} is satisfied. Here $\alpha^{-\frac{d}{2}}$ denotes the principal 
  root (main branch) of $\alpha^{-d}$. Finally, we obtain the heat kernel \eqref{equ:ScalarHeatKernel} from \eqref{equ:NSol} and \eqref{equ:phiSol}. The 
  properties \eqref{equ:H1} and \eqref{equ:H2} follow directly from the construction of the heat kernel. It remains to verify property \eqref{equ:H3}. For this 
  we use the integral
  \begin{align}
    \label{equ:IntegrationFormula}
    \int_{0}^{\infty}r^{n-1}e^{-z r^2}dr=\frac{z^{-\frac{n}{2}}}{2\Gamma\left(\frac{n}{2}\right)},
  \end{align}
  which holds for $n\in\C$ with $\Re n>0$ and $z\in\C$ with $\Re z>0$. Using the transformation theorem (with transformations for $d$-dimensional 
  polar coordinates and $\Phi(\xi)=2^{-1}t^{-\frac{1}{2}}\left(e^{tS}x-\xi\right)$) and formula \eqref{equ:IntegrationFormula} (with $n=d$ and $z=\alpha^{-1}$) we obtain, 
  similarly to the proof of \cite[Proposition 3.4.1]{CalinChangFurutaniIwasaki2011}, for every $\phi\in C_{\mathrm{c}}^{\infty}(\R^d,\C)$
  \begin{align*}
     & \lim_{t\downarrow 0}H(x,\xi,t)(\phi)
    =  \lim_{t\downarrow 0}\int_{\R^d}H(x,\xi,t)\phi(\xi)d\xi \\
    =& \lim_{t\downarrow 0}\int_{\R^d}\left(4\pi\alpha t\right)^{-\frac{d}{2}}\exp\left(-\delta t-\left(4\alpha t\right)^{-1}\left|e^{tS}x-\xi\right|^2\right)\phi(\xi)d\xi \\
    =& \lim_{t\downarrow 0}\left(4\pi\alpha t\right)^{-\frac{d}{2}}\left(4t\right)^{\frac{d}{2}}\int_{\R^d}\exp\left(-\delta t-\alpha^{-1}\left|\psi\right|^2\right)\phi(e^{tS}x-2t^{\frac{1}{2}}\psi)d\psi \\
    =& \left(\pi\alpha\right)^{-\frac{d}{2}}\int_{\R^d}\exp\left(-\alpha^{-1}\left|\psi\right|^2\right)d\psi \phi(x) \\
    =& \left(\pi\alpha\right)^{-\frac{d}{2}} 2\pi^{\frac{d}{2}}\Gamma\left(\frac{d}{2}\right)\int_{0}^{\infty}r^{d-1}e^{-\alpha^{-1}r^2}dr \phi(x) \\
    =& \left(\pi\alpha\right)^{-\frac{d}{2}} 2\pi^{\frac{d}{2}}\Gamma\left(\frac{d}{2}\right) \frac{\alpha^{\frac{d}{2}}}{2\Gamma\left(\frac{d}{2}\right)} \phi(x)
    =  \phi(x)
    = \delta_{x}(\xi)(\phi).
  \end{align*}
  Note that $\Re z=\Re\left(\alpha^{-1}\right)=\frac{\Re\overline{\alpha}}{\left|\alpha\right|^2}=\frac{\Re\alpha}{\left|\alpha\right|^2}>0$ is true 
  by assumption \eqref{cond:A2}.
\end{proof}

We conclude this section with several remarks concerning generalizations and extensions.

\textbf{Simultaneous diagonalization of $A$ and $B$.}
We stress that condition \eqref{cond:A8B} is crucial in Theorem \ref{thm:HeatKernel}. For arbitrary matrices $A,B\in\C^{N,N}$, where only the matrix $A$ is diagonalizable 
and satisfies \eqref{cond:A2}, the heat kernel of \eqref{equ:Linfty2} is in general not given by \eqref{equ:HeatKernel}. In this case we can expect at most a series 
representation for the heat kernel. This seems to be an open problem. 

\textbf{Ellipticity assumption.}
Assumption \eqref{cond:A2} in Theorem \ref{thm:HeatKernel} states that $\L_{\infty}$ is an elliptic differential operator. Using the weaker assumption $\Re\sigma(A)\geqslant 0$, 
which includes coupled parabolic-hyperbolic differential operators, no heat kernel representation seems to be known.

\textbf{Generalized heat kernel ansatz.}
For the computation of heat kernels for more general differential operators, Beals used in \cite[(2)]{Beals1999} the generalized ansatz
\begin{align*}
  H(x,\xi,t)=\varphi(t)\exp\left(-Q_t(x,\xi)\right),\,t>0,\,x,\xi\in\R^d,
\end{align*}
where $Q_t$ is a quadratic form of $2d$ variables, instead of the ansatz from \eqref{equ:ansatz}. This formula is motivated by the Trotter product formula and 
the Feynman-Kac formula, \cite[Section 2.8]{Mao2008}. Such a general ansatz was also used in \cite[(13.2.14)]{CalinChangFurutaniIwasaki2011} for the construction 
of heat kernels for degenerate elliptic operators.

\textbf{Generalized Ornstein-Uhlenbeck operator.}
Let the assumptions \eqref{cond:A8B}, \eqref{cond:A2}, $Q\in\R^{d,d}$, $Q>0$, $Q=Q^T$ and $S\in\R^{d,d}$ be satisfied and consider 
the generalized $N$-dimensional complex-valued Ornstein-Uhlenbeck operator
\begin{align*}
  \left[\L_{\mathrm{OU}}v\right](x)=&A\,\trace\left(Q D^2v(x)\right)+\left\langle Sx,\nabla v(x)\right\rangle-B v(x) \\
                                   =&A\sum_{i=1}^{d}\sum_{j=1}^{d}Q_{ij}D_{i}D_{j}v(x)+\sum_{i=1}^{d}\sum_{j=1}^{d}S_{ij}x_jD_iv(x)-B v(x),\,x\in\R^d.
\end{align*}
Then one can show that
\begin{align*}
  H(x,\xi,t)=\left(4\pi A\right)^{-\frac{d}{2}}\left(\det Q_t\right)^{-\frac{1}{2}}\exp\left(-Bt-\left(4 A\right)^{-1}\left\langle Q_t^{-1}(e^{tS}x-\psi),(e^{tS}x-\psi)\right\rangle\right)
\end{align*}
with
\begin{align*}
  Q_t = \int_{0}^{t}\exp\left(\tau S\right)Q\exp\left(\tau S^T\right) d\tau
\end{align*}
is a heat kernel of $\L_{\mathrm{OU}}$. This is true, even if \eqref{cond:A5} is not satisfied.

\textbf{Heat kernel via Fourier-Bessel method.}
The Fourier-Bessel method used in \cite[Section 3.2]{BeynLorenz2008} provides a further possibility to determine a heat kernel for $\L_{\infty}$ on $\R^2$. 
Therein one computes a Green's function of $\L_{\infty}$ and finds that Green's function equals the time integral over the heat kernel, see Remark \ref{rem:GreensFunctionLptheta} 
below and \cite[Section 1.6]{Otten2014}. An advantage of this method is that it can be extended to circular disks (bounded domains) with Dirichlet, Neumann and Robin boundary conditions.

%
%
\sect{Properties of the Ornstein-Uhlenbeck kernel}
\label{sec:SomePropertiesOfTheHeatKernel}

In this section we collect some basic properties of the heat kernel from Theorem \ref{thm:HeatKernel}. These properties are essential for the analysis of the semigroup, 
for the solvability of identification problems and for the evidence that the solution of the resolvent equation has exponential decay in space. 

The heat kernel satisfies the following Chapman-Kolmogorov formula, which plays an important role for the semigroup properties, \cite[Proposition C.3.2]{LorenziBertoldi2007}.

\begin{lemma}[Chapman-Kolmogorov formula]\label{lem:ChapmanKolmogorovFormula}
  Let the assumptions \eqref{cond:A8B}--\eqref{cond:A5} be satisfied. Then the following equality is satisfied
  \begin{align*}
    \int_{\R^d}H(x,\tilde{\xi},t_1)H(\tilde{\xi},\xi,t_2)d\tilde{\xi} = H(x,\xi,t_1+t_2)\quad\forall\,x,\xi\in\R^d,\,\forall\,t_1,t_2>0.
  \end{align*}
\end{lemma}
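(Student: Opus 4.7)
The natural strategy is to reduce to the scalar situation exactly as in the proof of Theorem \ref{thm:HeatKernel}. Writing $H(x,\xi,t) = Y\tilde H(x,\xi,t)Y^{-1}$ with $\tilde H$ the diagonal kernel coming from the simultaneously diagonalized operator, the product $H(x,\tilde\xi,t_1)H(\tilde\xi,\xi,t_2)$ telescopes a $YY^{-1}$ in the middle, so the identity for $H$ reduces componentwise to the scalar identity. Hence it suffices to prove the Chapman--Kolmogorov formula for the scalar complex kernel \eqref{equ:ScalarHeatKernel}, i.e.\ with $\alpha,\delta\in\C$, $\Re\alpha>0$, $S=-S^T$.

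In the scalar case the product of the two kernels equals
\begin{align*}
  (4\pi\alpha t_1)^{-\frac d2}(4\pi\alpha t_2)^{-\frac d2}e^{-\delta(t_1+t_2)}\exp\!\left(-\frac{|e^{t_1S}x-\tilde\xi|^2}{4\alpha t_1}-\frac{|e^{t_2S}\tilde\xi-\xi|^2}{4\alpha t_2}\right),
\end{align*}
and I would proceed in three bookkeeping steps. First, perform the change of variables $\eta:=e^{t_2S}\tilde\xi$; since $e^{t_2S}\in\SO(d)$ by \eqref{cond:A5}, the Jacobian is $1$ and
\begin{align*}
  |e^{t_1S}x-\tilde\xi|^2=|e^{-t_2S}(e^{(t_1+t_2)S}x-\eta)|^2=|e^{(t_1+t_2)S}x-\eta|^2.
\end{align*}
Write $y:=e^{(t_1+t_2)S}x$ so the exponent becomes $-\frac{1}{4\alpha}\!\bigl(\tfrac{|y-\eta|^2}{t_1}+\tfrac{|\eta-\xi|^2}{t_2}\bigr)$.

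Second, complete the square in $\eta$. Collecting $|\eta|^2$ coefficients gives $\frac{t_1+t_2}{t_1t_2}$, and a straightforward expansion yields
\begin{align*}
  \frac{|y-\eta|^2}{t_1}+\frac{|\eta-\xi|^2}{t_2}=\frac{t_1+t_2}{t_1t_2}\Bigl|\eta-\tfrac{t_2y+t_1\xi}{t_1+t_2}\Bigr|^2+\frac{|y-\xi|^2}{t_1+t_2},
\end{align*}
where the constant term is obtained via the identity $(t_1+t_2)(t_2|y|^2+t_1|\xi|^2)-|t_2y+t_1\xi|^2=t_1t_2|y-\xi|^2$.

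Third, evaluate the resulting Gaussian. The $\eta$-integral is $\int_{\R^d}\exp(-z|\eta-\mu|^2)\,d\eta=(\pi/z)^{d/2}$ with $z=\tfrac{t_1+t_2}{4\alpha t_1t_2}$; this is legitimate because $\Re z = \Re(\alpha^{-1})\cdot\tfrac{t_1+t_2}{4t_1t_2}>0$ by \eqref{cond:A2}, and the principal branch of $z^{-d/2}$ used here is consistent with the principal branch chosen for $\alpha^{-d/2}$ in \eqref{equ:ScalarHeatKernel} (this branch-consistency is the one delicate point I expect). Multiplying the Gaussian factor $(4\pi\alpha t_1t_2/(t_1+t_2))^{d/2}$ against the prefactor $(4\pi\alpha)^{-d}(t_1t_2)^{-d/2}$ and combining with $e^{-\delta(t_1+t_2)}\exp(-|y-\xi|^2/(4\alpha(t_1+t_2)))$ reproduces precisely $(4\pi\alpha(t_1+t_2))^{-d/2}\exp(-\delta(t_1+t_2)-(4\alpha(t_1+t_2))^{-1}|e^{(t_1+t_2)S}x-\xi|^2)=H(x,\xi,t_1+t_2)$.

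The main obstacle is conceptual rather than computational: one must take care that the square-root/power branches arising from complex-valued $\alpha$ match up correctly under the multiplication $(4\pi\alpha t_1)^{-d/2}(4\pi\alpha t_2)^{-d/2}\cdot(4\pi\alpha t_1t_2/(t_1+t_2))^{d/2}=(4\pi\alpha(t_1+t_2))^{-d/2}$; this is legitimate because all three factors involve the principal branch applied to elements with strictly positive real part, so the standard identity $w_1^{-d/2}w_2^{-d/2}=(w_1w_2)^{-d/2}$ applies.
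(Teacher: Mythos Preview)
Your proof is correct and follows essentially the same route as the paper: reduce to the scalar case via the simultaneous diagonalization $H=Y\tilde H Y^{-1}$, then evaluate the Gaussian convolution using $\Re(\alpha^{-1})>0$ and the orthogonality of $e^{tS}$. The only cosmetic difference is that the paper keeps the integration variable $\tilde\xi$ and splits the $d$-dimensional integral into a product of one-dimensional Gaussian integrals via the formula \eqref{equ:integralformula} with $a=(e^{t_1S}x)_i$, $b=(e^{-t_2S}\xi)_i$, whereas you perform the orthogonal substitution $\eta=e^{t_2S}\tilde\xi$ first and complete the square directly in $\R^d$; both lead to the same prefactor cancellation and the same use of $|e^{t_1S}x-e^{-t_2S}\xi|=|e^{(t_1+t_2)S}x-\xi|$. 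Your explicit remark on branch consistency for the complex powers is a point the paper leaves implicit in \eqref{equ:integralformula}.
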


\begin{remark}
  For the proof we need the following integral
  \begin{align}
    \label{equ:integralformula}
    \begin{split}
      &\int_{-\infty}^{\infty}\exp\left(-c_1\left(a-\psi\right)^2-c_2\left(\psi-b\right)^2\right)d\psi \\
      &\quad\quad\quad\quad\quad\quad\quad = \left(\frac{\pi}{c_1+c_2}\right)^{\frac{1}{2}}\exp\left(-\frac{c_1 c_2}{c_1 +c_2}\left(a-b\right)^2\right)
    \end{split}
  \end{align}
  for $a,b,c_1,c_2\in\C$ with $\Re c_1>0$, $\Re c_2>0$.
\end{remark}

\begin{proof}
  First let us prove the assertion for the diagonalized kernel
  \begin{align*}
    \tilde{H}(x,\xi,t)=(4\pi t \Lambda_A)^{-\frac{d}{2}}\exp\left(-\Lambda_Bt-(4t\Lambda_A)^{-1}\left|e^{tS}x-\xi\right|^2\right),
  \end{align*}
  where $A=Y\Lambda_A Y^{-1}$ and $B=Y\Lambda_B Y^{-1}$. Condition \eqref{cond:A5} yields $\left|e^{tS}x\right|=\left|x\right|$, hence
  \begin{align} 
    \label{equ:SolKernIntProperty}
    \begin{split}
       & \int_{\R^d}\tilde{H}(x,\tilde{\xi},t_1)\tilde{H}(\tilde{\xi},\xi,t_2)d\tilde{\xi} \\
      =& \left(4\pi t_1 \Lambda_A\right)^{-\frac{d}{2}} \left(4\pi t_2 \Lambda_A\right)^{-\frac{d}{2}} \exp\left(-\Lambda_B(t_1+t_2)\right) \\
       & \cdot\int_{\R^d}\exp\left(-\left(4 t_1 \Lambda_A\right)^{-1}\left|e^{t_1S}x-\tilde{\xi}\right|^2-\left(4 t_2 \Lambda_A\right)^{-1}\left|\tilde{\xi}-e^{-t_2S}\xi\right|^2\right) d\tilde{\xi}.
    \end{split}
  \end{align}
  From \eqref{cond:A2} we deduce that $\Re\lambda_j^A>0$ and hence $\Re\left(\lambda_j^A\right)^{-1}=\Re\frac{\overline{\lambda_j^A}}{\left|\lambda_j^A\right|^2}>0$ for every $j=1,\ldots,N$. Using formula 
  \eqref{equ:integralformula} componentwise with $c_1=\left(4 t_1 \lambda_j^A\right)^{-1}$, $c_2=\left(4 t_2 \lambda_j^A\right)^{-1}$, $\psi=\tilde{\xi}_{i}$, $a=\left(e^{t_1S}x\right)_i$, $b=\left(e^{-t_2S}\xi\right)_i$,
  $i=1,\dots,d$ we obtain
  \begin{align*}
       &\int_{-\infty}^{\infty}\exp\left(-\left(4 t_1 \Lambda_A\right)^{-1}\left(\left(e^{t_1S}x\right)_i-\tilde{\xi}_i\right)^2-\left(4 t_2 \Lambda_A\right)^{-1}\left(\tilde{\xi}_i-\left(e^{-t_2S}\xi\right)_i\right)^2\right) d\tilde{\xi}_i\\
      =&\left(4\pi t_1 \Lambda_A\right)^{\frac{1}{2}} \left(4\pi t_2 \Lambda_A\right)^{\frac{1}{2}} \left(4\pi \left(t_1+t_2\right) \Lambda_A\right)^{-\frac{1}{2}} \\
       &\cdot\exp\left(-\left(4 \left(t_1+t_2\right) \Lambda_A\right)^{-1}\left(\left(e^{t_1S}x\right)_i-\left(e^{-t_2S}\xi\right)_i\right)^2\right).
  \end{align*}
  Using this integral and $\left|e^{tS}x\right|=\left|x\right|$ we can compute the latter integral in \eqref{equ:SolKernIntProperty}
  \begin{align*}
     &\int_{\R^d}\exp\left(-\left(4 t_1 \Lambda_A\right)^{-1}\left|e^{t_1S}x-\tilde{\xi}\right|^2-\left(4 t_2 \Lambda_A\right)^{-1}\left|\tilde{\xi}-e^{-t_2 S}\xi\right|^2\right) d\tilde{\xi} \\
    =&\int_{-\infty}^{\infty}\cdots\int_{-\infty}^{\infty}\exp\bigg(\sum_{i=1}^{d}\bigg[-\left(4 t_1 \Lambda_A\right)^{-1}\left((e^{t_1S}x)_i-\tilde{\xi}_i\right)^2 \\
     &\quad\quad\quad\quad                                                              -\left(4 t_2 \Lambda_A\right)^{-1}\left(\tilde{\xi}_i-(e^{-t_2S}\xi)_i\right)^2\bigg]\bigg)d\tilde{\xi}_{1}\cdots d\tilde{\xi}_{d} \\
    =&\int_{-\infty}^{\infty}\cdots\int_{-\infty}^{\infty}\prod_{i=1}^{d}\exp\bigg(-\left(4 t_1 \Lambda_A\right)^{-1}\left((e^{t_1S}x)_i-\tilde{\xi}_i\right)^2 \\
     &\quad\quad\quad\quad                                                         -\left(4 t_2 \Lambda_A\right)^{-1}\left(\tilde{\xi}_i-(e^{-t_2S}\xi)_i\right)^2\bigg)d\tilde{\xi}_{1}\cdots d\tilde{\xi}_{d} \\
    =&\prod_{i=1}^{d}\int_{-\infty}^{\infty}\exp\left(-\left(4 t_1 \Lambda_A\right)^{-1}\left((e^{t_1S}x)_i-\tilde{\xi}_i\right)^2-\left(4 t_2 \Lambda_A\right)^{-1}\left(\tilde{\xi}_i-(e^{-t_2S}\xi)_i\right)^2\right)d\tilde{\xi}_{i} \\
    =&\left(4\pi t_1 \Lambda_A\right)^{\frac{d}{2}} \left(4\pi t_2 \Lambda_A\right)^{\frac{d}{2}} \left(4\pi \left(t_1+t_2\right) \Lambda_A\right)^{-\frac{d}{2}} \\
     &\quad\quad\quad\quad\cdot\exp\left(-\left(4 \left(t_1+t_2\right) \Lambda_A\right)^{-1}\sum_{i=1}^{d}\left(\left(e^{t_1S}x\right)_i-\left(e^{-t_2S}\xi\right)_i\right)^2\right) \\
    =&\left(4\pi t_1 \Lambda_A\right)^{\frac{d}{2}} \left(4\pi t_2 \Lambda_A\right)^{\frac{d}{2}} \left(4\pi \left(t_1+t_2\right) \Lambda_A\right)^{-\frac{d}{2}} \\
     &\quad\quad\quad\quad\cdot\exp\left(-\left(4 \left(t_1+t_2\right) \Lambda_A\right)^{-1}\left|e^{\left(t_1+t_2\right)S}x-\xi\right|^2\right).
  \end{align*}
  Using this equality in \eqref{equ:SolKernIntProperty} we obtain
  \begin{align*}
     \int_{\R^d}\tilde{H}(x,\tilde{\xi},t_1)\tilde{H}(\tilde{\xi},\xi,t_2)d\tilde{\xi}
    =\tilde{H}(x,\xi,t_1+t_2)\quad\forall\,x,\xi\in\R^d,\,\forall\,t_1,t_2>0.
  \end{align*}
  We now consider the general case: Since $H(x,\xi,t)=Y\tilde{H}(x,\xi,t)Y^{-1}$ we obtain
  \begin{align*}
     & \int_{\R^d}H(x,\tilde{\xi},t_1)H(\tilde{\xi},\xi,t_2)d\tilde{\xi}
    = Y\int_{\R^d}\tilde{H}(x,\tilde{\xi},t_1)\tilde{H}(\tilde{\xi},\xi,t_2)d\tilde{\xi}Y^{-1} \\
    =& Y\tilde{H}(x,\xi,t_1+t_2)Y^{-1} = H(x,\xi,t_1+t_2)\quad\forall\,x,\xi\in\R^d,\,\forall\,t_1,t_2>0.
  \end{align*}
\end{proof}

The first two partial derivatives of $H$ with respect to $x$, i.e. $D_i:=\frac{\partial}{\partial x_i}$, are
\begin{align*}
      D_i H(x,\xi,t) =& -(2 t A)^{-1}\left\langle e^{tS}x-\xi,e^{tS}e_i\right\rangle H(x,\xi,t), \\
  D_j D_i H(x,\xi,t) =& \left(-\left(2 t A\right)^{-1}\delta_{ij}+\left(2 t A\right)^{-2}\left\langle e^{tS}x-\xi,e^{tS}e_i\right\rangle\left\langle e^{tS}x-\xi,e^{tS}e_j\right\rangle\right) \\
                      & \cdot H(x,\xi,t)
\end{align*}
for $i,j=1,\ldots,d$, where we used \eqref{cond:A5} once more. Let us define the radial kernels
\begin{align}
       \tilde{K}(\psi,t) :=& \left(4\pi t \Lambda_A\right)^{-\frac{d}{2}}\exp\left(-\Lambda_B t-\left(4 t \Lambda_A\right)^{-1}\left|\psi\right|^2\right), \label{equ:Ktilde}\\
               K(\psi,t) :=& H(x,e^{tS}x-\psi,t) = Y\tilde{K}(\psi,t)Y^{-1}\label{equ:K}\\
                          =& \left(4\pi t A\right)^{-\frac{d}{2}}\exp\left(-B t-\left(4 t A\right)^{-1}\left|\psi\right|^2\right), \nonumber\\
     \tilde{K}^i(\psi,t) :=& -\left(2 t \Lambda_A\right)^{-1}\left\langle\psi,e^{tS}e_i\right\rangle \tilde{K}(\psi,t), \label{equ:Kitilde}\\
             K^i(\psi,t) :=& \left[D_i H(x,\xi,t)\right]_{\xi=e^{tS}x-\psi} = Y\tilde{K}^i(\psi,t)Y^{-1} \label{equ:Ki}\\
                          =& -\left(2 t A\right)^{-1}\left\langle\psi,e^{tS}e_i\right\rangle K(\psi,t), \nonumber\\
  \tilde{K}^{ji}(\psi,t) :=& \left(\left(2 t \Lambda_A\right)^{-2}\left\langle\psi,e^{tS}e_i\right\rangle\left\langle\psi,e^{tS}e_j\right\rangle-\left(2 t\Lambda_A\right)^{-1}\delta_{ij}\right)\tilde{K}(\psi,t), \label{equ:Kjitilde}\\
          K^{ji}(\psi,t) :=& \left[D_j D_i H(x,\xi,t)\right]_{\xi=e^{tS}x-\psi} = Y\tilde{K}^{ji}(\psi,t)Y^{-1}\label{equ:Kji}\\
                          =& \left(\left(2 t A\right)^{-2}\left\langle\psi,e^{tS}e_i\right\rangle\left\langle\psi,e^{tS}e_j\right\rangle-\left(2 t A\right)^{-1}\delta_{ij}\right)K(\psi,t). \nonumber
\end{align}

To prove boundedness of the associated semigroup in exponentially weighted function spaces, we need upper bounds of the exponentially weighted integrals of 
the kernels $K$, $K^i$ and $K^{ji}$.

\begin{lemma}\label{lem:PropK}
  Let the assumptions \eqref{cond:A8B}--\eqref{cond:A5} be satisfied, $p,\eta\in\R$ and let $K$, $K^i$, $K^{ji}$ be given by 
  \eqref{equ:K}, \eqref{equ:Ki} and \eqref{equ:Kji} for every $i,j=1,\ldots,d$, then
  \begin{align*}
    &\textrm{\enum{1} }\int_{\R^d}e^{\eta p|\psi|}\left|K(\psi,t)\right| d\psi\leqslant C_1(t)                       &&,\,t>0, \\
    &\textrm{\enum{2} }\int_{\R^d}e^{\eta p|\psi|}\left|K^i(\psi,t)\right| d\psi\leqslant C_2(t)                     &&,\,t>0, \\
    &\textrm{\enum{3} }\int_{\R^d}e^{\eta p|\psi|}\left|K^{ji}(\psi,t)\right| d\psi\leqslant C_3(t)                  &&,\,t>0,
  \end{align*}
  where $\left|\cdot\right|$ denotes the spectral norm and the functions are given by
  \begin{align*}
    C_1(t) =& \kappa \aone^{\frac{d}{2}}e^{-\bzero t}\bigg[{}_1F_1\left(\frac{d}{2};\frac{1}{2};\nu t\right)
              +2\frac{\Gamma\left(\frac{d+1}{2}\right)}{\Gamma\left(\frac{d}{2}\right)}\left(\nu t\right)^{\frac{1}{2}}{}_1F_1\left(\frac{d+1}{2};\frac{3}{2};\nu t\right)\bigg], \\
    C_2(t) =& \kappa \aone^{\frac{d+1}{2}}e^{-\bzero t}\left(t\amin\right)^{-\frac{1}{2}}\bigg[\frac{\Gamma\left(\frac{d+1}{2}\right)}{\Gamma\left(\frac{d}{2}\right)}
               {}_1F_1\left(\frac{d+1}{2};\frac{1}{2};\nu t\right) \\
            & +2\frac{\Gamma\left(\frac{d+2}{2}\right)}{\Gamma\left(\frac{d}{2}\right)}\left(\nu t\right)^{\frac{1}{2}}{}_1F_1\left(\frac{d+2}{2};\frac{3}{2};\nu t\right)\bigg], \\
    C_3(t) =& \kappa \aone^{\frac{d+2}{2}} e^{-\bzero t} \left(t\amin\right)^{-1}\bigg[
              \frac{\Gamma\left(\frac{d+2}{2}\right)}{\Gamma\left(\frac{d}{2}\right)}{}_1F_1\left(\frac{d+2}{2};\frac{1}{2};\nu t\right) \\
            & +2\frac{\Gamma\left(\frac{d+3}{2}\right)}{\Gamma\left(\frac{d}{2}\right)}\left(\nu t\right)^{\frac{1}{2}}{}_1F_1\left(\frac{d+3}{2};\frac{3}{2};\nu t\right)
              +\frac{\delta_{ij}}{2}\aone^{-1}{}_1F_1\left(\frac{d}{2};\frac{1}{2};\nu t\right) \\
            & +\delta_{ij}\aone^{-1}\frac{\Gamma\left(\frac{d+1}{2}\right)}{\Gamma\left(\frac{d}{2}\right)}\left(\nu t\right)^{\frac{1}{2}}
              {}_1F_1\left(\frac{d+1}{2};\frac{3}{2};\nu t\right)\bigg]
  \end{align*}
  with $\aone:=\frac{\amax^2}{\amin\azero}\geqslant 1$, $\nu:=\frac{\amax^2 \eta^2 p^2}{\azero}\geqslant 0$, $\kappa:=\cond(Y)$ with $Y$ from \eqref{cond:A8B} and 
  $\amax,\amin,\azero,\bzero$ defined in \eqref{equ:aminamaxazerobzero}. Note that $C_{1+\left|\beta\right|}(t)\sim t^{\frac{d-1}{2}} e^{-(\bzero-\nu)t}$ as $t\to\infty$ and 
  $C_{1+\left|\beta\right|}(t)\sim t^{-\frac{\left|\beta\right|}{2}}$ as $t\to 0$ for every $\left|\beta\right|=0,1,2$.
\end{lemma}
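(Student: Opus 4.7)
The plan is to reduce all three matrix-valued integrals to scalar integrals on $[0,\infty)$ which can be evaluated in closed form via the confluent hypergeometric function $ {}_1F_1 $. The argument has three layers: diagonalization, pointwise kernel estimates, and a polar-coordinate evaluation.

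First I would exploit condition \eqref{cond:A8B}: since $K=Y\tilde K Y^{-1}$ and similarly $K^i=Y\tilde K^i Y^{-1}$, $K^{ji}=Y\tilde K^{ji} Y^{-1}$ by \eqref{equ:K}, \eqref{equ:Ki}, \eqref{equ:Kji}, submultiplicativity of the spectral norm gives $|K^{\bullet}(\psi,t)|\leqslant \kappa\,|\tilde K^{\bullet}(\psi,t)|$, which accounts for the factor $\kappa=\cond(Y)$ in front of each $C_i(t)$. Since the diagonalized kernels are diagonal matrices, their spectral norms equal the maximum of the moduli of their entries. For each eigenvalue $\lambda^A_j$ of $A$ I use $|\lambda^A_j|\geqslant\amin$ to bound $|(4\pi t\lambda^A_j)^{-d/2}|$, $\Re\lambda^B_j\geqslant\bzero$ to bound $|\exp(-\lambda^B_j t)|$, and the identity $\Re(4t\lambda^A_j)^{-1}=\Re\lambda^A_j/(4t|\lambda^A_j|^2)\geqslant \azero/(4t\amax^2)$ to bound the Gaussian factor. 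This yields
\[
|\tilde K(\psi,t)|\leqslant (4\pi t\amin)^{-d/2}e^{-\bzero t}\exp\!\left(-\tfrac{\azero}{4t\amax^2}|\psi|^2\right),
\]
with analogous bounds for $|\tilde K^i|$ and $|\tilde K^{ji}|$ that pick up extra factors $(2t\amin)^{-1}|\psi|$ and $(2t\amin)^{-2}|\psi|^2 + (2t\amin)^{-1}\delta_{ij}$. Here the skew-symmetry of $S$ from \eqref{cond:A5} plays a role via $|e^{tS}e_i|=1$, giving $|\langle\psi,e^{tS}e_i\rangle|\leqslant|\psi|$ by Cauchy-Schwarz.

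Next I would pass to polar coordinates with $d\psi=r^{d-1}\,dr\,d\sigma$ and $|\mathbb S^{d-1}|=2\pi^{d/2}/\Gamma(d/2)$, and use $e^{\eta p|\psi|}\leqslant e^{|\eta p|r}$. Each of the three integrals reduces to
\[
I_n \;=\; \int_0^{\infty} r^{n-1} e^{|\eta p|r-\beta r^2}\,dr,\qquad \beta=\frac{\azero}{4t\amax^2},
\]
with $n=d$ for part (1), $n=d+1$ for part (2), and $n\in\{d+2,d\}$ for the two summands in part (3). To bring $I_n$ into the stated ${}_1F_1$-form I would expand $e^{|\eta p|r}=\sum_{k\geqslant 0}(|\eta p|r)^k/k!$, use $\int_0^{\infty} r^{m} e^{-\beta r^2}dr=\Gamma(\tfrac{m+1}{2})/(2\beta^{(m+1)/2})$ termwise, and split the resulting sum into even/odd $k$. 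Legendre's duplication formula $(2m)!=4^m m!(1/2)_m$ and $(2m+1)!=4^m m!(3/2)_m$ then converts the two subseries into
\[
I_n = \frac{\Gamma(n/2)}{2\beta^{n/2}}\,{}_1F_1\!\left(\tfrac{n}{2};\tfrac{1}{2};\tfrac{|\eta p|^2}{4\beta}\right)
\;+\;\frac{|\eta p|\,\Gamma((n+1)/2)}{2\beta^{(n+1)/2}}\,{}_1F_1\!\left(\tfrac{n+1}{2};\tfrac{3}{2};\tfrac{|\eta p|^2}{4\beta}\right),
\]
and substituting $\beta$ gives $|\eta p|^2/(4\beta)=\amax^2\eta^2 p^2 t/\azero=\nu t$, so the ${}_1F_1$-arguments are precisely those in the statement. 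Combining the outside factors $\kappa(4\pi t\amin)^{-d/2}e^{-\bzero t}$, $(2t\amin)^{-1}$ or $(2t\amin)^{-2}$ (depending on the derivative order), the spherical measure $|\mathbb S^{d-1}|$, and the various powers of $\beta$, all $\pi$'s cancel and the remaining powers of $\amin,\amax,\azero,t$ collapse to $\aone^{(d+|\beta|)/2}(t\amin)^{-|\beta|/2}$ by straightforward exponent bookkeeping using $\aone=\amax^2/(\amin\azero)$. This produces exactly $C_{1+|\beta|}(t)$.

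The asymptotic tails are read off from two standard facts: ${}_1F_1(a;b;0)=1$ yields the $t\to 0$ behavior $t^{-|\beta|/2}$, while Kummer's asymptotic ${}_1F_1(a;b;z)\sim \Gamma(b)/\Gamma(a)\,z^{a-b}e^z$ as $z\to +\infty$, inserted for $z=\nu t$, yields $e^{\nu t}t^{(d-1)/2}$ growth that, multiplied by the explicit $e^{-\bzero t}$, gives $t^{(d-1)/2}e^{(\nu-\bzero)t}$. The main obstacle is not conceptual but rather the careful arithmetic of step three: reducing the complex matrix-valued integrand to a positive scalar integrand while losing no essential information, and then matching the constants produced by the ${}_1F_1$-expansion to the precise normalization $\kappa\aone^{(d+|\beta|)/2}(t\amin)^{-|\beta|/2}e^{-\bzero t}$ prescribed in the lemma.
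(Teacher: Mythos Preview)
Your proposal is correct and follows essentially the same route as the paper: reduce to the diagonal kernel via \eqref{cond:A8B} to pick up $\kappa$, bound the diagonal entries pointwise using $\amin,\amax,\azero,\bzero$ together with Cauchy--Schwarz and \eqref{cond:A5}, then pass to polar coordinates and evaluate the resulting radial integral in terms of ${}_1F_1$. The only cosmetic difference is that the paper first substitutes $s=(\azero/(4t\amax^2))^{1/2}r$ and then quotes the integral identity \eqref{equ:1F1Formula1}, whereas you derive that identity directly by expanding $e^{|\eta p|r}$ termwise and splitting into even/odd indices; both yield the same constants.
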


\begin{remark}\label{rem:1F1}
  The function ${}_1F_1(a;b;z)$ denotes the Kummer confluent hypergeometric function $M(a,b,z)$ and satisfies the formula
  \begin{align}
    \label{equ:1F1Formula1}
    \begin{split}
      \int_{0}^{\infty}s^n e^{-s^2+rs}ds =& \frac{1}{2}\Gamma\left(\frac{n+1}{2}\right){}_1F_1\left(\frac{n+1}{2};\frac{1}{2};\frac{r^2}{4}\right) \\
                                          & +\frac{r}{2}\Gamma\left(\frac{n}{2}+1\right){}_1F_1\left(\frac{n}{2}+1;\frac{3}{2};\frac{r^2}{4}\right)
    \end{split}
  \end{align}
  for $r\in\R$ with $r\geqslant 0$ and $n\in\C$ with $\Re n>-1$, that we need to prove Lemma \ref{lem:PropK}. 
  Moreover, in Lemma \ref{lem:ImproperInt} we will need the connection formula
  \begin{align}
      {}_1F_1\left(a;b;x\right) = e^x{}_1F_1\left(b-a;b;-x\right) \label{equ:1F1Formula2}
  \end{align}
  for $a,b,x\in\C$ with $b\neq 0,-1,-2,\ldots$ (see \cite{OlverLozierBoisvertClark2010} 13.2.39) and the integral
  \begin{align}
      \int_{0}^{\infty}t^{\alpha-1}e^{-ct}{}_1F_1\left(a;b;-t\right)dt = c^{-\alpha}\Gamma\left(\alpha\right){}_2F_1\left(a,\alpha;b;-\frac{1}{c}\right) \label{equ:1F1Formula3}
  \end{align}
  for $a,b,c,\alpha\in\C$ with $b\neq 0,-1,-2,\ldots$, $\Re\alpha>0$ and $\Re c>0$ (see \cite{OlverLozierBoisvertClark2010} 16.5.3) where ${}_2F_1\left(a_1,a_2;b_1;z\right)$ 
  denotes the generalized hypergeometric function. To verify the asymptotic behavior of the function ${}_1F_1\left(a,b,z\right)$ at infinity we 
  need the limiting form
  \begin{align}
    {}_1F_1\left(a,b,z\right) \sim \frac{\Gamma(b)}{\Gamma(a)}z^{a-b}e^{z}\text{, as }z\to\infty\text{, }\left|\arg z\right|<\frac{\pi}{2} \label{equ:1F1Formula4}
  \end{align}
  for $z\in\C$ and $a,b\in\C\backslash\{0,-1,-2,\ldots\}$ (see \cite{OlverLozierBoisvertClark2010} 13.2.4 and 13.2.23). Observe that ${}_1F_1\left(a;b;0\right)=1$ and ${}_2F_1\left(a_1,a_2;b_1;0\right)=1$ 
  which induce a simplification of the constants in Lemma \ref{lem:PropK} in case of $\eta=0$.
\end{remark}

\begin{proof}
  First note that due to \eqref{cond:A8B}, \eqref{equ:K}, \eqref{equ:Ki}, \eqref{equ:Kji} it hold 
  \begin{align}
    \label{equ:SpectralNormK}
    \left|K^{\beta}(\psi,t)\right|=\left|Y\tilde{K}^{\beta}(\psi,t)Y^{-1}\right|\leqslant |Y||Y^{-1}|\left|\tilde{K}^{\beta}(\psi,t)\right|
    =\kappa\max_{k=1,\ldots,N}\left|\tilde{K}^{\beta}_{kk}(\psi,t)\right|,
  \end{align}
  for every multi-index $\beta\in\N_0^d$ with $\left|\beta\right|\leqslant 2$. Note that $\tilde{K}^{\beta}(\psi,t)\in\C^{N,N}$ is diagonal.\\
  \enum{1}: Using \eqref{equ:Ktilde} a simple computation shows that
  \begin{align}
    \label{equ:SpectralNormKtilde}
    \max_{k=1,\ldots,N}\left|\tilde{K}_{kk}(\psi,t)\right|\leqslant\left(4\pi t\amin\right)^{-\frac{d}{2}}e^{-\bzero t-\frac{\azero}{4t\amax^2}\left|\psi\right|^2}
  \end{align}  
  for every $\psi\in\R^d$ and $t>0$. From \eqref{equ:SpectralNormK} with $\left|\beta\right|=0$, \eqref{equ:SpectralNormKtilde}, the transformation theorem (with transformations for 
  $d$-dimensional polar coordinates and $\Phi(r)=\left(\frac{\azero}{4t\amax^2}\right)^{\frac{1}{2}}r$) and formula \eqref{equ:1F1Formula1} (since \eqref{cond:A2} is 
  satisfied) we obtain
  \begin{align*}
              &\int_{\R^d}e^{\eta p|\psi|}\left|K(\psi,t)\right| d\psi \\
    \leqslant &\int_{\R^d}\kappa e^{\eta p|\psi|}\left(4\pi t\amin\right)^{-\frac{d}{2}}e^{-\bzero t-\frac{\azero}{4t\amax^2}|\psi|^2}d\psi \\
            = &\kappa\left(4\pi t\amin\right)^{-\frac{d}{2}}e^{-\bzero t}\frac{2\pi^{\frac{d}{2}}}{\Gamma\left(\frac{d}{2}\right)}\int_{0}^{\infty}r^{d-1}e^{-\frac{\azero}{4t\amax^2}r^2+\eta p r} dr \\
            = &\kappa\left(\frac{\amax^2}{\amin\azero}\right)^{\frac{d}{2}}e^{-\bzero t}\frac{2}{\Gamma\left(\frac{d}{2}\right)}
               \int_{0}^{\infty}s^{d-1}e^{-s^2+\left(\frac{4\amax^2 \eta^2 p^2 t}{\azero}\right)^{\frac{1}{2}}s}ds 
            =  C_1(t).
  \end{align*}

  \noindent
  \enum{2}: Using \eqref{equ:Kitilde} for every $i=1,\ldots,d$, $\psi\in\R^d$ and $t>0$ we obtain
  \begin{align}
    \label{equ:SpectralNormKitilde}
    \max_{k=1,\ldots,N}\left|\tilde{K}^i_{kk}(\psi,t)\right|\leqslant\left(2t\amin\right)^{-1}\left|\left\langle\psi,e^{tS}e_i\right\rangle\right|\max_{k=1,\ldots,N}\left|\tilde{K}_{kk}(\psi,t)\right|.
  \end{align}
  From \eqref{equ:SpectralNormK} with $\left|\beta\right|=1$, \eqref{equ:SpectralNormKitilde} with \eqref{equ:SpectralNormKtilde}, Cauchy-Schwarz inequality with assumption \eqref{cond:A5} 
  ($\left|\left\langle\psi,e^{tS}e_i\right\rangle\right|\leqslant|\psi||e^{tS}e_i|=|\psi|$), the transformation theorem (with transformations from (1)) and formula \eqref{equ:1F1Formula1} 
  we obtain
  \begin{align*}
              &\int_{\R^d}e^{\eta p|\psi|}\left|K^i(\psi,t)\right| d\psi \\
    \leqslant &\int_{\R^d}\kappa e^{\eta p|\psi|}\left(2t\amin\right)^{-1}\left|\left\langle\psi,e^{tS}e_i\right\rangle\right|\max_{k=1,\ldots,N}\left|\tilde{K}_{kk}(\psi,t)\right| d\psi \\
    \leqslant &\int_{\R^d}\kappa e^{\eta p|\psi|}(2t\amin)^{-1}\left|\psi\right| (4\pi t\amin)^{-\frac{d}{2}}e^{-\bzero t-\frac{\azero}{4t\amax^2}|\psi|^2}d\psi \\
            = &\kappa (2t\amin)^{-1}(4\pi t\amin)^{-\frac{d}{2}}e^{-\bzero t}\frac{2\pi^{\frac{d}{2}}}{\Gamma\left(\frac{d}{2}\right)}
               \int_{0}^{\infty}r^d e^{-\frac{\azero}{4t\amax^2}r^2+\eta p r} dr \\
            = &\kappa\left(\frac{\amax^2}{\amin\azero}\right)^{\frac{d+1}{2}}e^{-\bzero t}\frac{2}{\Gamma\left(\frac{d}{2}\right)}\left(t\amin\right)^{-\frac{1}{2}}
               \int_{0}^{\infty}s^{d}e^{-s^2+\left(\frac{4\amax^2 \eta^2 p^2 t}{\azero}\right)^{\frac{1}{2}}s}ds
             = C_2(t).
  \end{align*}

  \noindent
  \enum{3}: Using \eqref{equ:Kjitilde}, the triangle inequality and Cauchy-Schwarz inequality with assumption \eqref{cond:A5} (see (2)) yield for every $i,j=1,\ldots,d$, $\psi\in\R^d$ and $t>0$
  \begin{align}
    \label{equ:SpectralNormKjitilde}
    \max_{k=1,\ldots,N}\left|\tilde{K}^{ji}_{kk}(\psi,t)\right|\leqslant\left(\left(2t\amin\right)^{-2}\left|\psi\right|^2+\left(2t\amin\right)^{-1}\delta_{ij}\right) \max_{k=1,\ldots,N}\left|\tilde{K}_{kk}(\psi,t)\right|.
  \end{align}
  From \eqref{equ:SpectralNormK} with $\left|\beta\right|=2$, \eqref{equ:SpectralNormKjitilde} with \eqref{equ:SpectralNormKtilde}, the transformation theorem (with transformations from (1)) and 
  formula \eqref{equ:1F1Formula1} we obtain
  \begin{align*}
              &\int_{\R^d}e^{\eta p|\psi|}\left|K^{ji}(\psi,t)\right| d\psi \\
    \leqslant &\int_{\R^d}\kappa e^{\eta p|\psi|}\left(\left(2t\amin\right)^{-2}\left|\psi\right|^2+\left(2t\amin\right)^{-1}\delta_{ij}\right) \max_{k=1,\ldots,N}\left|\tilde{K}_{kk}(\psi,t)\right| d\psi \\
    \leqslant &\int_{\R^d}\kappa e^{\eta p|\psi|}\left(\left(2t\amin\right)^{-2}\left|\psi\right|^2+\left(2t\amin\right)^{-1}\delta_{ij}\right) \left(4\pi t\amin\right)^{-\frac{d}{2}}e^{-\bzero t-\frac{\azero}{4t\amax^2}\left|\psi\right|} d\psi \\
    = &\kappa(4\pi t\amin)^{-\frac{d}{2}} e^{-\bzero t}\frac{2\pi^{\frac{d}{2}}}{\Gamma\left(\frac{d}{2}\right)}
       \int_{0}^{\infty}\left(\left(2t\amin\right)^{-2}r^{2}+\left(2t\amin\right)^{-1}\delta_{ij}\right)r^{d-1}
       e^{-\frac{\azero}{4t\amax^2}r^2+\eta p r} dr \\
    = &\kappa\left(\frac{\amax^2}{\amin\azero}\right)^{\frac{d+2}{2}} e^{-\bzero t}\frac{2}{\Gamma\left(\frac{d}{2}\right)} \left(t\amin\right)^{-1}
       \int_{0}^{\infty}s^{d+1}e^{-s^2+\left(\frac{4\amax^2\eta^2 p^2 t}{\azero}\right)^{\frac{1}{2}} s} ds \\
      &+\delta_{ij}\kappa\left(\frac{\amax^2}{\amin\azero}\right)^{\frac{d}{2}} e^{-\bzero t}\frac{1}{\Gamma\left(\frac{d}{2}\right)} \left(t\amin\right)^{-1}
       \int_{0}^{\infty}s^{d-1}e^{-s^2+\left(\frac{4\amax^2\eta^2 p^2 t}{\azero}\right)^{\frac{1}{2}} s} ds = C_3(t).
  \end{align*}
\end{proof}

The following lemma is essential for the proof that the Schwartz space is a core for the infinitesimal generator of the Ornstein-Uhlenbeck semigroup, see 
\cite[Proposition 2.2 and 3.2]{Metafune2001} for the scalar real-valued case and \cite[Theorem 5.10]{Otten2014} for the complex-valued extension. 
Moreover, the first statement of this lemma is also needed to prove strong continuity of the semigroup.

\begin{lemma}\label{lem:PropK2}
  Let the assumptions \eqref{cond:A8B} and \eqref{cond:A2} be satisfied and let $K$ be given by \eqref{equ:K}, then for every $i,j=1,\ldots,d$ and $t>0$ we have
  \begin{align*}
    &\textrm{\enum{1} }\int_{\R^d}K(\psi,t)d\psi=e^{-Bt}, \\
    &\textrm{\enum{2} }\int_{\R^d}K(\psi,t)\psi_i d\psi=0, \\
    &\textrm{\enum{3} }\int_{\R^d}K(\psi,t)\psi_i \psi_j d\psi=\begin{cases}2te^{-Bt}A &,\,i=j\\0 &,\,i\neq j\end{cases}.
  \end{align*}
\end{lemma}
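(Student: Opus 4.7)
The plan is to exploit the simultaneous diagonalization \eqref{cond:A8B} to reduce everything to a scalar Gaussian moment calculation, and then to use parity/symmetry to dispatch (2) and the off-diagonal part of (3).

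First I would write $K(\psi,t)=Y\tilde{K}(\psi,t)Y^{-1}$ as in \eqref{equ:K}, where $\tilde{K}$ has the diagonal matrices $\Lambda_A,\Lambda_B$ in place of $A,B$. Since $Y$ is independent of $\psi$, every integral can be pulled through and it suffices to prove the three identities with $A,B$ replaced by single eigenvalues $\alpha,\beta$ with $\Re\alpha>0$ (this is \eqref{cond:A2}). The ingredient that makes everything work in the complex setting is that $\Re(1/\alpha)=\Re\alpha/|\alpha|^2>0$, so the Gaussian $e^{-|\psi|^2/(4t\alpha)}$ is absolutely integrable on $\R^d$.

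For \textbf{(1)}, the $d$-dimensional complex Gaussian integral
\begin{align*}
  \int_{\R^d}e^{-|\psi|^2/(4t\alpha)}d\psi = (4\pi t\alpha)^{d/2}
\end{align*}
(which was already carried out in the normalization step of the proof of Theorem \ref{thm:HeatKernel}, using \eqref{equ:IntegrationFormula} with $n=d$, $z=\alpha^{-1}$ and the principal branch convention) immediately yields $\int_{\R^d}\tilde{K}(\psi,t)d\psi=e^{-\Lambda_B t}$, and after conjugating by $Y$ the claim follows since $B=Y\Lambda_B Y^{-1}$ gives $Ye^{-\Lambda_B t}Y^{-1}=e^{-Bt}$.

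For \textbf{(2)}, I would observe that $K(\psi,t)$ depends on $\psi$ only through $|\psi|^2$; hence $\psi\mapsto K(\psi,t)\psi_i$ is odd under the reflection $\psi_i\mapsto-\psi_i$. Absolute integrability (which is Lemma \ref{lem:PropK}(1) with $\eta=0$, or again the Gaussian bound) lets one invoke Fubini and conclude the integral vanishes. An analogous parity argument disposes of the off-diagonal case $i\neq j$ of \textbf{(3)}: the integrand is odd in $\psi_i$. For the diagonal case $i=j$ I would reduce by rotational symmetry to
\begin{align*}
  \int_{\R^d}\tilde{K}(\psi,t)\psi_i^2\,d\psi=\tfrac{1}{d}\int_{\R^d}\tilde{K}(\psi,t)|\psi|^2\,d\psi,
\end{align*}
and then compute the second moment of the complex Gaussian either by the identity
\begin{align*}
  \int_{\R^d}|\psi|^2 e^{-|\psi|^2/(4t\alpha)}d\psi = 2dt\alpha\,(4\pi t\alpha)^{d/2},
\end{align*}
obtained from \eqref{equ:IntegrationFormula} with $n=d+2$, or equivalently by differentiating the identity from (1) with respect to the parameter $(4t\alpha)^{-1}$. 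Either way one obtains $2t\alpha\,e^{-\beta t}$ in the scalar case, and passing back through the diagonalization turns this into $2te^{-Bt}A$ (using that $A$ and $e^{-Bt}$ commute since they are simultaneously diagonalizable by $Y$).

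There is essentially no hard step: the only thing one has to be slightly careful about is justifying the complex Gaussian moment formulas and the choice of the principal branch of $(4\pi t\alpha)^{-d/2}$, but both are already pinned down in the proof of Theorem \ref{thm:HeatKernel}, so the argument is purely bookkeeping via parity and the one-dimensional moment identity \eqref{equ:IntegrationFormula}.
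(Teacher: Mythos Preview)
Your proposal is correct and the reduction via simultaneous diagonalization to the scalar case is exactly how the paper starts (cf.\ the auxiliary formula \eqref{equ:AuxiliaryIntegral}). Where you diverge is in the treatment of (2) and (3): the paper does not invoke parity or rotational symmetry but instead works in explicit $d$-dimensional polar coordinates \eqref{equ:dDimSphericalCoordinates}, computing each angular integral by hand and distinguishing many sub-cases (e.g.\ $i=1$, $i=2$, $3\leqslant i\leqslant d$ for (2), and all pairings of $i,j$ for (3)), using product formulas of the type \eqref{equ:ProductFormulaSinusIntegral} for iterated sine integrals. Your symmetry argument is considerably shorter and more transparent---odd-in-$\psi_i$ parity plus absolute integrability is all that is needed for (2) and the off-diagonal case of (3), and the averaging trick $\int K\psi_i^2=\tfrac{1}{d}\int K|\psi|^2$ reduces the diagonal case to a single radial moment. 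The paper's route has the minor advantage of being fully explicit (no appeal to an abstract symmetry), but yours is the standard and cleaner way to handle Gaussian moments and loses nothing in rigor.
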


\begin{remark}
  Throughout this proof we will use $d$-dimensional polar coordinates\index{polar coordinates!d-dimensional}: Let $x\in\R^d$, $\Omega:=]0,\infty[\times[0,2\pi[\times[0,\pi]^d$ 
  and $(r,\phi,\theta_1,\ldots,\theta_{d-2})\in\Omega$, then we define
  \begin{align}
    x_1 =& \Phi_1(r,\phi,\theta_1,\ldots,\theta_{d-2}) := r \cos\phi \prod_{k=1}^{d-2}\sin\theta_k, \nonumber\\
    x_2 =& \Phi_2(r,\phi,\theta_1,\ldots,\theta_{d-2}) := r \sin\phi \prod_{k=1}^{d-2}\sin\theta_k, \label{equ:dDimSphericalCoordinates}\\
    x_i =& \Phi_i(r,\phi,\theta_1,\ldots,\theta_{d-2}) := r \cos\theta_{i-2} \prod_{k=i-1}^{d-2}\sin\theta_k,\,3\leqslant i\leqslant d. \nonumber
  \end{align}
  The transformation $\Phi:\Omega\rightarrow\R^d$ is a $C^{\infty}$-diffeomorphism, \cite[X.8.8 Lemma]{AmannEscher2001}, satisfying $\Phi(\overline{\Omega})=\R^d$ and
  \begin{align*}
    \mathrm{det}D\Phi(r,\phi,\theta_1,\ldots,\theta_{d-2})=(-1)^d r^{d-1}\prod_{k=1}^{d-2}\left(\sin\theta_k\right)^k.
  \end{align*}
\end{remark}

\begin{proof}
  First note that \eqref{cond:A2}, \eqref{cond:A8B} and componentwise integration yields for every $n>-1$
  \begin{align}
    \label{equ:AuxiliaryIntegral}
    \begin{split}
       & \int_{0}^{\infty}r^{n}e^{-\left(4tA\right)^{-1}r^2}dr
      =  \int_{0}^{\infty}r^{n}e^{-Y\left(4t\Lambda_A\right)^{-1}Y^{-1}r^2}dr \\
      =& Y\int_{0}^{\infty}r^{n}e^{-\left(4t\Lambda_A\right)^{-1}r^2}dr Y^{-1}
      =  \frac{\Gamma\left(\frac{n+1}{2}\right)}{2}Y(4t\Lambda_A)^{\frac{n+1}{2}}Y^{-1} \\
      =& \frac{\Gamma\left(\frac{n+1}{2}\right)}{2}(4tA)^{\frac{n+1}{2}}.
    \end{split}
  \end{align}
  \enum{1}: From \eqref{equ:K}, \eqref{equ:AuxiliaryIntegral} (with $n=d-1$), the transformation theorem (with $d$-dimensional polar coordinates) and \eqref{cond:A8B} 
  we directly obtain for $t>0$
  \begin{align*}
      \int_{\R^d}K(\psi,t)d\psi
    =& \left(4\pi t A\right)^{-\frac{d}{2}}e^{-Bt}\int_{\R^d}e^{-\left(4tA\right)^{-1}|\psi|^2}d\psi \\
    =& \left(4\pi t A\right)^{-\frac{d}{2}}e^{-Bt}\frac{2 \pi^{\frac{d}{2}}}{\Gamma\left(\frac{d}{2}\right)}\int_{0}^{\infty}r^{d-1}e^{-\left(4tA\right)^{-1}r^2}dr \\
    =& \left(4\pi t A\right)^{-\frac{d}{2}}e^{-Bt}\frac{2 \pi^{\frac{d}{2}}}{\Gamma\left(\frac{d}{2}\right)}\frac{\Gamma\left(\frac{d}{2}\right)}{2}(4tA)^{\frac{d}{2}}
    =  e^{-Bt}.
  \end{align*}
  \enum{2}: Now we must use $d$-dimensional polar coordinates. From the transformation theorem we obtain
  \begin{align*}
     & \int_{\R^d}e^{-\left(4tA\right)^{-1}|\psi|^2}\psi_i d\psi \\
    =& \int_{\Omega}e^{-(4tA)^{-1}r^2}\cdot\left\{\begin{array}{cl}r \cos\phi \prod_{k=1}^{d-2}\sin\theta_k &,\,i=1\\
                                                                   r \sin\phi \prod_{k=1}^{d-2}\sin\theta_k&,\,i=2\\
                                                                   r \cos\theta_{i-2} \prod_{k=i-1}^{d-2}\sin\theta_k&,\,3\leqslant i\leqslant d-2\end{array}\right\} \\
     & \quad\quad\cdot\left|\mathrm{det}D\Phi(r,\phi,\theta_1,\ldots,\theta_{d-2})\right|dr d\phi d\theta_1\cdots d\theta_{d-2} \\
    =& \int_{\Omega}e^{-(4tA)^{-1}r^2}\cdot\left\{\begin{array}{cl}r \cos\phi \prod_{k=1}^{d-2}\sin\theta_k &,\,i=1\\
                                                                   r \sin\phi \prod_{k=1}^{d-2}\sin\theta_k&,\,i=2\\
                                                                   r \cos\theta_{i-2} \prod_{k=i-1}^{d-2}\sin\theta_k&,\,3\leqslant i\leqslant d-2\end{array}\right\} \\
     & \quad\quad \cdot r^{d-1}\prod_{k=1}^{d-2}\left|\sin\theta_k\right|^k dr d\phi d\theta_1\cdots d\theta_{d-2} \\
    =& \left(\int_{0}^{\infty}r^{d}e^{-(4tA)^{-1}r^2}dr\right)\int_{0}^{2\pi}\int_{0}^{\pi}\cdots\int_{0}^{\pi} \\
     & \quad\quad\left\{\begin{array}{cl}\cos\phi \prod_{k=1}^{d-2}\sin\theta_k \prod_{k=1}^{d-2}\left|\sin\theta_k\right|^k&,\,i=1\\
                                                                   \sin\phi \prod_{k=1}^{d-2}\sin\theta_k \prod_{k=1}^{d-2}\left|\sin\theta_k\right|^k&,\,i=2\\
                                                                   \cos\theta_{i-2} \prod_{k=i-1}^{d-2}\sin\theta_k \prod_{k=1}^{d-2}\left|\sin\theta_k\right|^k&,\,3\leqslant i\leqslant d-2\end{array}\right\}
       d\phi d\theta_1\cdots d\theta_{d-2}.
  \end{align*}
  In case of $i=1$ and $i=2$ the $\phi$-integrals vanishes and in case of $3\leqslant i\leqslant d-2$ the $\theta_{i-2}$-integral vanishes, since 
  using for example 
  \begin{align*}
    \left(\sin a\right)^n = \frac{1}{2^n}\sum_{k=0}^{n}\left(\begin{array}{c}n\\k\end{array}\right)\cos\left((n-2k)\left(a-\frac{\pi}{2}\right)\right),\,n\in\N,
  \end{align*}
  we obtain
  \begin{align}
    \label{equ:CosinusSinusIntegral}
      \int_{0}^{\pi}\cos\theta_{i-2}\left|\sin\theta_{i-2}\right|^{i-2}d\theta_{i-2}
    = \int_{0}^{\pi}\cos\theta_{i-2}\left(\sin\theta_{i-2}\right)^{i-2}d\theta_{i-2}=0.
  \end{align}
  Hence, we have for every $i=1,\ldots,d$ and $t>0$
  \begin{align*}
      \int_{\R^d}K(\psi,t)\psi_i d\psi
    = \left(4\pi t A\right)^{-\frac{d}{2}}e^{-Bt}\int_{\R^d}e^{-\left(4tA\right)^{-1}|\psi|^2}\psi_i d\psi
    = 0.
  \end{align*}
  \enum{3}: Finally, let us use $d$-dimensional polar coordinates once more. Similar to (2) from the transformation theorem we obtain
  \begin{align*}
      \int_{\R^d}e^{-\left(4tA\right)^{-1}|\psi|^2}\psi_i \psi_j d\psi 
    =& \left(\int_{0}^{\infty}r^{d+1}e^{-(4tA)^{-1}r^2}dr\right)\int_{0}^{2\pi}\int_{0}^{\pi}\cdots\int_{0}^{\pi} \\
     & \quad\quad\left\{\begin{array}{cl}\cos\phi \prod_{k=1}^{d-2}\sin\theta_k &,\,i=1\\
                                         \sin\phi \prod_{k=1}^{d-2}\sin\theta_k &,\,i=2\\
                                         \cos\theta_{i-2} \prod_{k=i-1}^{d-2}\sin\theta_k &,\,3\leqslant i\leqslant d-2\end{array}\right\} \prod_{k=1}^{d-2}\left|\sin\theta_k\right|^k \\
     & \quad\quad\left\{\begin{array}{cl}\cos\phi \prod_{k=1}^{d-2}\sin\theta_k &,\,j=1\\
                                         \sin\phi \prod_{k=1}^{d-2}\sin\theta_k &,\,j=2\\
                                         \cos\theta_{j-2} \prod_{k=j-1}^{d-2}\sin\theta_k &,\,3\leqslant j\leqslant d-2\end{array}\right\}
       d\phi d\theta_1\cdots d\theta_{d-2} \\
    =& \begin{cases} \frac{\pi^{\frac{d}{2}}}{2}\left(4tA\right)^{\frac{d}{2}+1} &,\,i=j\\0&,\,i\neq j\end{cases}.
  \end{align*}
  Accept the last equality, we first deduce from \eqref{equ:AuxiliaryIntegral} with $n=d+1$
  \begin{align}
    \label{equ:rIntegral}
    \int_{0}^{\infty}r^{d+1}e^{-(4tA)^{-1}r^2}dr=\frac{\Gamma\left(\frac{d+2}{2}\right)}{2}(4tA)^{\frac{d}{2}+1}.
  \end{align}
  Moreover, for $\Re l>-1$, $a,b\in\N_0$ with $a\leqslant b$ it holds
  \begin{align}
    \label{equ:ProductFormulaSinusIntegral}
      \prod_{l=a}^{b}\int_{0}^{\pi}\left(\sin\theta\right)^l d\theta
    = \prod_{l=a}^{b}\pi^{\frac{1}{2}}\frac{\Gamma\left(\frac{l+1}{2}\right)}{\Gamma\left(\frac{l+2}{2}\right)}
    = \pi^{\frac{b-a+1}{2}}\frac{\Gamma\left(\frac{a+1}{2}\right)}{\Gamma\left(\frac{b+2}{2}\right)}.
  \end{align}
  Let is first consider the cases $i=j=1$ and $i=j=2$. Here we must use
  \begin{align*}
    \int_{0}^{2\pi}\left(\cos\phi\right)^2 d\phi = \pi,\quad \int_{0}^{2\pi}\left(\sin\phi\right)^2 d\phi = \pi
  \end{align*}
  and \eqref{equ:ProductFormulaSinusIntegral} with $a=3$ and $b=d$
  \begin{align*}
      \prod_{k=1}^{d-2}\int_{0}^{\pi}\left(\sin\theta_k\right)^2\left|\sin\theta_k\right|^{k}d\theta_k
    =& \prod_{k=1}^{d-2}\int_{0}^{\pi}\left(\sin\theta\right)^{k+2}d\theta
    = \prod_{l=3}^{d}\int_{0}^{\pi}\left(\sin\theta\right)^{l}d\theta \\
    =& \pi^{\frac{d}{2}-1}\frac{\Gamma\left(\frac{4}{2}\right)}{\Gamma\left(\frac{d+2}{2}\right)}
    = \frac{\pi^{\frac{d}{2}-1}}{\Gamma\left(\frac{d+2}{2}\right)}.
  \end{align*}
  Now, let us consider the case $3\leqslant i=j\leqslant d$. Here we can deduce from \eqref{equ:ProductFormulaSinusIntegral} (with
  $a=1$ and $b=i-3$, $a=b=i-2$, $a=b=i$ as well as $a=i+1$ and $b=d$)
  \begin{align*}
    & \prod_{k=1}^{i-3}\int_{0}^{\pi}\left|\sin\theta_k\right|^k d\theta_k
      = \prod_{k=1}^{i-3}\int_{0}^{\pi}\left(\sin\theta\right)^k d\theta
      = \pi^{\frac{i-3}{2}}\frac{\Gamma(1)}{\Gamma\left(\frac{i-1}{2}\right)}
      = \frac{\pi^{\frac{i-3}{2}}}{\Gamma\left(\frac{i-1}{2}\right)}, \\
    & \int_{0}^{\pi}\left(\cos\theta_{i-2}\right)^2\left|\sin\theta_{i-2}\right|^{i-2}d\theta_{i-2}
      = \int_{0}^{\pi}\left(1-\left(\sin\theta_{i-2}\right)^2\right)\left(\sin\theta_{i-2}\right)^{i-2}d\theta_{i-2} \\
    & \quad\quad  = \int_{0}^{\pi}\left(\sin\theta\right)^{i-2}d\theta - \int_{0}^{\pi}\left(\sin\theta\right)^{i}d\theta
      = \pi^{\frac{1}{2}}\left(\frac{\Gamma\left(\frac{i-1}{2}\right)}{\Gamma\left(\frac{i}{2}\right)}-\frac{\Gamma\left(\frac{i+1}{2}\right)}{\Gamma\left(\frac{i+2}{2}\right)}\right), \\
    & \prod_{k=i-1}^{d-2}\int_{0}^{\pi}\left(\sin\theta_k\right)^2\left|\sin\theta_k\right|^k d\theta_k
      = \prod_{k=i-1}^{d-2}\int_{0}^{\pi}\left(\sin\theta\right)^{k+2}d\theta \\
    & \quad\quad = \prod_{l=i+1}^{d}\int_{0}^{\pi}\left(\sin\theta\right)^{l}d\theta
      = \pi^{\frac{d-i}{2}}\frac{\Gamma\left(\frac{i+2}{2}\right)}{\Gamma\left(\frac{d+2}{2}\right)}, \\
    & \int_{0}^{2\pi}1d\phi=2\pi.
  \end{align*}
  Multiplying these four terms with \eqref{equ:rIntegral} and using $\Gamma(x+1)=x\Gamma(x)$ we obtain $\frac{\pi^{\frac{d}{2}}}{2}\left(4tA\right)^{\frac{d}{2}+1}$. 
  Next, we consider the cases $3\leqslant i<j\leqslant d$ and $3\leqslant j<i\leqslant d$. Let w.l.o.g. $i<j$, then the term from \eqref{equ:CosinusSinusIntegral} vanishes. 
  For all the other cases exactly one term vanishes, namely
  \begin{align*}
    &\int_{0}^{2\pi}\sin\phi \cos\phi d\phi=0, &&\text{if $(i=1,j=2)$ or $(i=2,j=1)$,} \\
    &\int_{0}^{2\pi}\cos\phi d\phi=0,          &&\text{if $(i=1,3\leqslant j\leqslant d)$ or $(3\leqslant i\leqslant d,j=1)$,} \\
    &\int_{0}^{2\pi}\sin\phi d\phi=0,          &&\text{if $(i=2,3\leqslant j\leqslant d)$ or $(3\leqslant i\leqslant d,j=2)$.}
  \end{align*}
\end{proof}

Assuming \eqref{cond:A2}, let us introduce the following functions
\begin{align*}
  C_4(t) =& C_{\theta} \kappa \aone^{\frac{d}{2}}e^{-\bzero t}\bigg[{}_1F_1\left(\frac{d}{2};\frac{1}{2};\nu t\right)
            +2\frac{\Gamma\left(\frac{d+1}{2}\right)}{\Gamma\left(\frac{d}{2}\right)}\left(\nu t\right)^{\frac{1}{2}}{}_1F_1\left(\frac{d+1}{2};\frac{3}{2};\nu t\right)\bigg]^{\frac{1}{p}}, \\
  C_5(t) =& C_{\theta} \kappa \aone^{\frac{d+1}{2}}e^{-\bzero t}\left(t\amin\right)^{-\frac{1}{2}}\bigg[\frac{\Gamma\left(\frac{d+1}{2}\right)}{\Gamma\left(\frac{d}{2}\right)}
            {}_1F_1\left(\frac{d+1}{2};\frac{1}{2};\nu t\right) \\
          & +2\frac{\Gamma\left(\frac{d+2}{2}\right)}{\Gamma\left(\frac{d}{2}\right)}\left(\nu t\right)^{\frac{1}{2}}{}_1F_1\left(\frac{d+2}{2};\frac{3}{2};\nu t\right)\bigg]^{\frac{1}{p}},\\
  C_6(t) =& C_{\theta} \kappa \aone^{\frac{d+2}{2}}e^{-\bzero t} \left(t\amin\right)^{-1}\bigg[
            \frac{\Gamma\left(\frac{d+2}{2}\right)}{\Gamma\left(\frac{d}{2}\right)}{}_1F_1\left(\frac{d+2}{2};\frac{1}{2};\nu t\right) \\
          & +2\frac{\Gamma\left(\frac{d+3}{2}\right)}{\Gamma\left(\frac{d}{2}\right)}\left(\nu t\right)^{\frac{1}{2}}{}_1F_1\left(\frac{d+3}{2};\frac{3}{2};\nu t\right)
            +\frac{\delta_{ij}}{2}\aone^{-1}{}_1F_1\left(\frac{d}{2};\frac{1}{2};\nu t\right) \\
          & +\delta_{ij}\aone^{-1}\frac{\Gamma\left(\frac{d+1}{2}\right)}{\Gamma\left(\frac{d}{2}\right)}\left(\nu t\right)^{\frac{1}{2}}
            {}_1F_1\left(\frac{d+1}{2};\frac{3}{2};\nu t\right)\bigg]^{\frac{1}{p}},
\end{align*}
with $\aone:=\frac{\amax^2}{\amin\azero}\geqslant 1$, $\nu:=\frac{\amax^2 \eta^2 p^2}{\azero}\geqslant 0$, $1\leqslant p\leqslant\infty$, $\eta\geqslant 0$, $\kappa:=\cond(Y)$ 
with $Y$ from \eqref{cond:A8B} and $\amax,\amin,\azero,\bzero$ defined in \eqref{equ:aminamaxazerobzero}. In case of $p=\infty$ the constants are given by $C_{4+\left|\beta\right|}(t)$ 
with $p=1$ for every $\left|\beta\right|=0,1,2$. Note that in case of $p=1$ we have $C_{4+\left|\beta\right|}(t)=C_{\theta}C_{1+\left|\beta\right|}(t)$.

In order to prove exponentially weighted resolvent estimates, we need the following lemma. The upper bound for $\eta^2$ is the maximal decay rate appearing in the 
resolvent estimates.

\begin{lemma}\label{lem:ImproperInt}
  Let the assumption \eqref{cond:A2} be satisfied and let $1\leqslant p<\infty$. Moreover, let $0<\vartheta<1$, $\omega\in\R$, $\lambda\in\C$ with $\Re\lambda>\omega$ and 
  $0\leqslant\eta^2\leqslant\vartheta\frac{\azero(\Re\lambda-\omega)}{\amax^2 p^2}$, then we have
  \begin{align*}
    &\textrm{\enum{1} }\int_{0}^{\infty}e^{-\Re\lambda t}C_4(t)dt\leqslant\frac{C_7}{\Re\lambda-\omega}, \\
    &\textrm{\enum{2} }\int_{0}^{\infty}e^{-\Re\lambda t}C_5(t)dt\leqslant\frac{C_8}{\left(\Re\lambda-\omega\right)^{\frac{1}{2}}},
  \end{align*}
  where the constants are given by
  \begin{align*}
    C_7 =& C_{\theta}\kappa \aone^{\frac{d}{2}}\left(\frac{1}{1-\vartheta}\right)^{\frac{1}{p}}
           \bigg({}_2F_1\left(-\frac{d-1}{2},1;\frac{1}{2};-\frac{\vartheta}{1-\vartheta}\right) \\
         & +\pi^{\frac{1}{2}}\frac{\Gamma\left(\frac{d+1}{2}\right)}{\Gamma\left(\frac{d}{2}\right)}\left(\frac{\vartheta}{1-\vartheta}\right)^{\frac{1}{2}}
           {}_2F_1\left(-\frac{d-2}{2},\frac{3}{2};\frac{3}{2};-\frac{\vartheta}{1-\vartheta}\right)\bigg)^{\frac{1}{p}}, \\
    C_8 =& C_{\theta}\kappa \aone^{\frac{d+1}{2}}\frac{\Gamma\left(\frac{1}{2}\right)}{\amin^{\frac{1}{2}}}
           \left(\frac{1}{1-\vartheta}\right)^{\frac{1}{2p}}
           \bigg(\frac{\Gamma\left(\frac{d+1}{2}\right)}{\Gamma\left(\frac{d}{2}\right)}{}_2F_1\left(-\frac{d}{2},\frac{1}{2};\frac{1}{2};-\frac{\vartheta}{1-\vartheta}\right) \\
         &  +2\frac{\Gamma\left(\frac{d+2}{2}\right)}{\Gamma\left(\frac{1}{2}\right)\Gamma\left(\frac{d}{2}\right)}\left(\frac{\vartheta}{1-\vartheta}\right)^{\frac{1}{2}}
           {}_2F_1\left(-\frac{d-1}{2},1;\frac{3}{2};-\frac{\vartheta}{1-\vartheta}\right)\bigg)^{\frac{1}{p}}
  \end{align*}
  with $\aone:=\frac{\amax^2}{\amin\azero}\geqslant 1$, $\kappa:=\cond(Y)$ with $Y$ from \eqref{cond:A8B} and $\amax,\amin,\azero,\bzero$ defined in \eqref{equ:aminamaxazerobzero}.
\end{lemma}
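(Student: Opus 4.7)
The plan is to reduce to the $p=1$ case via H\"older's inequality and then use Kummer's transformation \eqref{equ:1F1Formula2} together with the Laplace transform identity \eqref{equ:1F1Formula3} for ${}_1F_1$ to evaluate the resulting integrals in closed form.

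For assertion \enum{1}, pulling the $t$-independent constants out of $C_4(t)$ reduces the claim to estimating $\int_{0}^{\infty}e^{-(\Re\lambda+\bzero)t}h(t)^{1/p}dt$, where $h(t)$ is the bracketed sum of Kummer functions. Splitting
\begin{equation*}
  e^{-(\Re\lambda+\bzero)t}=e^{-(\Re\lambda+\bzero)t/p}\cdot e^{-(\Re\lambda+\bzero)t/q},\quad\tfrac{1}{p}+\tfrac{1}{q}=1,
\end{equation*}
and applying H\"older yields the upper bound
\begin{equation*}
  \bigg(\int_{0}^{\infty}e^{-(\Re\lambda+\bzero)t}h(t)\,dt\bigg)^{1/p}\cdot(\Re\lambda+\bzero)^{-1/q}.
\end{equation*}
The remaining $L^1$-integral I would evaluate exactly: \eqref{equ:1F1Formula2} extracts a factor $e^{\nu t}$ from each ${}_1F_1(a;b;\nu t)$, which combines with $e^{-(\Re\lambda+\bzero)t}$ to produce $e^{-(\Re\lambda+\bzero-\nu)t}$; substituting $u=\nu t$ and invoking \eqref{equ:1F1Formula3} with $\alpha=1$ on the first summand and $\alpha=\tfrac{3}{2}$ on the second produces a linear combination of ${}_2F_1$-functions whose common argument is $-\nu/(\Re\lambda+\bzero-\nu)$.

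At this step the hypothesis enters. The bound $\eta^2\leqslant\vartheta\azero(\Re\lambda-\omega)/(\amax^2 p^2)$ rewrites as $\nu\leqslant\vartheta(\Re\lambda-\omega)$, and in the natural range $\omega\geqslant -\bzero$ this furnishes both $\Re\lambda+\bzero-\nu\geqslant(1-\vartheta)(\Re\lambda-\omega)$ and $\nu/(\Re\lambda+\bzero-\nu)\leqslant\vartheta/(1-\vartheta)$. Feeding these into the closed form replaces every hypergeometric argument by $-\vartheta/(1-\vartheta)$ and every prefactor $(\Re\lambda+\bzero-\nu)^{-\beta}$ by $(1-\vartheta)^{-\beta}(\Re\lambda-\omega)^{-\beta}$; combined with the H\"older factor $(\Re\lambda+\bzero)^{-1/q}\leqslant(\Re\lambda-\omega)^{-1/q}$, the exponents collapse through $1/p+1/q=1$ to a single $(\Re\lambda-\omega)^{-1}$, matching $C_7$. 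The argument for \enum{2} is analogous for $C_5$, but the $(t\amin)^{-1/2}$-factor must be distributed symmetrically as $t^{-1/2}=t^{-1/(2p)}\cdot t^{-1/(2q)}$, so that both H\"older-factors remain integrable on $[0,\infty[$ for every $p\in[1,\infty[$; then $\|G\|_{q}^{q}=\Gamma(\tfrac{1}{2})(\Re\lambda+\bzero)^{-1/2}$, and applying \eqref{equ:1F1Formula3} with $\alpha=\tfrac{1}{2}$ and $\alpha=1$ on the two summands of the bracket yields the stated $C_8\,(\Re\lambda-\omega)^{-1/2}$.

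The main technical obstacle is precisely this split in \enum{2}: the naive choice $G(t)=e^{-(\Re\lambda+\bzero)t/q}t^{-1/2}$ fails to lie in $L^q$ unless $p>2$, so the symmetric split is forced. Beyond that, the remaining work is the bookkeeping $\Gamma(\tfrac{1}{2})^{1/p}\Gamma(\tfrac{1}{2})^{1/q}=\Gamma(\tfrac{1}{2})$, the H\"older exponent collapse, and the verification that the parameters in \eqref{equ:1F1Formula2}--\eqref{equ:1F1Formula3} meet the hypotheses (which is automatic since $b\in\{\tfrac{1}{2},\tfrac{3}{2}\}$ is never a non-positive integer and $\Re c=(\Re\lambda+\bzero-\nu)/\nu>0$ by the assumption above).
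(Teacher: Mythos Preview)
Your proposal is correct and follows essentially the same route as the paper: H\"older's inequality to reduce the $[\,\cdot\,]^{1/p}$ to an $L^1$-integral, Kummer's transformation \eqref{equ:1F1Formula2} to extract the $e^{\nu t}$ factor, and the Laplace-transform identity \eqref{equ:1F1Formula3} to obtain the ${}_2F_1$-expressions, followed by the monotonicity bound with $\vartheta/(1-\vartheta)$. The only cosmetic difference is that the paper replaces $e^{-(\Re\lambda+\bzero)t}$ by $e^{-\czero t}$ with $\czero=\Re\lambda-\omega$ at the outset, whereas you keep $\Re\lambda+\bzero$ throughout and pass to $\Re\lambda-\omega$ at the end; in particular, your explicit remark that the argument needs the ``natural range'' $\omega\geqslant-\bzero$ is exactly the inequality the paper silently uses in its first displayed step, and your symmetric split $t^{-1/2}=t^{-1/(2p)}t^{-1/(2q)}$ for \enum{2} is precisely what the paper does.
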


\begin{proof}
  \enum{1}: From $\czero:=\Re\lambda-\omega$, H{\"o}lder's inequality (with $q\in]1,\infty]$ such that $\frac{1}{p}+\frac{1}{q}=1$), the transformation theorem (with transformation 
  $\Phi(t)=\frac{\amax^2 \eta^2 p^2 t}{\azero}$), formula \eqref{equ:1F1Formula2} (with $a=\frac{d}{2}$, $b=\frac{1}{2}$, $x=s$ and $a=\frac{d+1}{2}$, $b=\frac{3}{2}$, $x=s$) 
  and formula \eqref{equ:1F1Formula3} (with $\alpha=1$, $c=\frac{\azero\czero-\amax^2 \eta^2 p^2}{\amax^2 \eta^2 p^2}$, $a=-\frac{d-1}{2}$, $b=\frac{1}{2}$ 
  and $\alpha=\frac{3}{2}$, $c=\frac{\azero\czero-\amax^2 \eta^2 p^2}{\amax^2 \eta^2 p^2}$, $a=-\frac{d-2}{2}$, $b=\frac{3}{2}$ -- note that because of \eqref{cond:A2}, $\czero>0$ and 
  $\eta^2<\frac{\azero\czero}{\amax^2 p^2}$ we have $\Re c>0$) we obtain
  \begin{align*}
             & \int_{0}^{\infty}e^{-\Re\lambda t}C_4(t)dt \\
            =& \int_{0}^{\infty}C_{\theta}\kappa\left(\frac{\amax^2}{\amin\azero}\right)^{\frac{d}{2}}e^{-\czero t}\bigg[{}_1F_1\left(\frac{d}{2};\frac{1}{2};\frac{\amax^2 \eta^2 p^2 t}{\azero}\right) \\
             & +2\frac{\Gamma\left(\frac{d+1}{2}\right)}{\Gamma\left(\frac{d}{2}\right)}\left(\frac{\amax^2 \eta^2 p^2 t}{\azero}\right)^{\frac{1}{2}}{}_1F_1\left(\frac{d+1}{2};\frac{3}{2};\frac{\amax^2 \eta^2 p^2 t}{\azero}\right)\bigg]^{\frac{1}{p}}dt\\
    \leqslant& C_{\theta}\kappa\left(\frac{\amax^2}{\amin\azero}\right)^{\frac{d}{2}}\bigg(\int_{0}^{\infty}e^{-\czero t}dt\bigg)^{\frac{1}{q}}
               \bigg(\int_{0}^{\infty}e^{-\czero t}{}_1F_1\left(\frac{d}{2};\frac{1}{2};\frac{\amax^2 \eta^2 p^2 t}{\azero}\right)dt \\
             & +2\frac{\Gamma\left(\frac{d+1}{2}\right)}{\Gamma\left(\frac{d}{2}\right)}\int_{0}^{\infty}\left(\frac{\amax^2 \eta^2 p^2 t}{\azero}\right)^{\frac{1}{2}}e^{-\czero t}
               {}_1F_1\left(\frac{d+1}{2};\frac{3}{2};\frac{\amax^2 \eta^2 p^2 t}{\azero}\right)dt\bigg)^{\frac{1}{p}} \\
            =& C_{\theta}\kappa \aone^{\frac{d}{2}}\left(\frac{1}{\czero}\right)^{\frac{1}{q}}\bigg(\left(\frac{\amax^2 \eta^2 p^2}{\azero}\right)^{-1}
               \int_{0}^{\infty}e^{-\frac{\azero\czero}{\amax^2 \eta^2 p^2}s}{}_1F_1\left(\frac{d}{2};\frac{1}{2};s\right)ds \\
             & +2\frac{\Gamma\left(\frac{d+1}{2}\right)}{\Gamma\left(\frac{d}{2}\right)}\left(\frac{\amax^2 \eta^2 p^2}{\azero}\right)^{-1}
               \int_{0}^{\infty}s^{\frac{1}{2}}e^{-\frac{\azero\czero}{\amax^2 \eta^2 p^2}s}{}_1F_1\left(\frac{d+1}{2};\frac{3}{2};s\right)ds\bigg)^{\frac{1}{p}} \\
            =& C_{\theta}\kappa \aone^{\frac{d}{2}}\left(\frac{1}{\czero}\right)\bigg(\left(\frac{\amax^2 \eta^2 p^2}{\azero\czero}\right)^{-1}
               \int_{0}^{\infty}e^{-\left(\frac{\azero\czero}{\amax^2 \eta^2 p^2}-1\right)s}{}_1F_1\left(-\frac{d-1}{2};\frac{1}{2};-s\right)ds \\
             & +2\frac{\Gamma\left(\frac{d+1}{2}\right)}{\Gamma\left(\frac{d}{2}\right)}\left(\frac{\amax^2 \eta^2 p^2}{\azero\czero}\right)^{-1}
               \int_{0}^{\infty}s^{\frac{1}{2}}e^{-\left(\frac{\azero\czero}{\amax^2 \eta^2 p^2}-1\right)s}{}_1F_1\left(-\frac{d-2}{2};\frac{3}{2};-s\right)ds\bigg)^{\frac{1}{p}} \\
            =& C_{\theta}\kappa\aone^{\frac{d}{2}}\left(\frac{1}{\czero}\right)\left(\frac{\azero\czero}{\azero\czero-\amax^2\eta^2 p^2}\right)^{\frac{1}{p}}
               \bigg({}_2F_1\left(-\frac{d-1}{2},1;\frac{1}{2};-\frac{\amax^2 \eta^2 p^2}{\azero\czero-\amax^2 \eta^2 p^2}\right) \\
             & +\pi^{\frac{1}{2}}\frac{\Gamma\left(\frac{d+1}{2}\right)}{\Gamma\left(\frac{d}{2}\right)}\left(\frac{\amax^2 \eta^2 p^2}{\azero\czero-\amax^2 \eta^2 p^2}\right)^{\frac{1}{2}}
               {}_2F_1\left(-\frac{d-2}{2},\frac{3}{2};\frac{3}{2};-\frac{\amax^2 \eta^2 p^2}{\azero\czero-\amax^2 \eta^2 p^2}\right)\bigg)^{\frac{1}{p}}.
  \end{align*}
  Finally, to obtain $C_7$ we must use that ${}_2F_1$ is strictly monotonically decreasing in $]-\infty,0]$ as well as the inequalities
  \begin{align}
    \label{equ:ThetaInequalities}
    \frac{\azero\czero}{\azero\czero-\amax^2\eta^2 p^2}\leqslant\frac{1}{1-\vartheta}\quad\text{and}\quad\frac{\amax^2 \eta^2 p^2}{\azero\czero-\amax^2 \eta^2 p^2}\leqslant\frac{\vartheta}{1-\vartheta}.
  \end{align}
  \enum{2}: From $\czero:=\Re\lambda-\omega$, H{\"o}lder's inequality (with $q\in]1,\infty]$ such that $\frac{1}{p}+\frac{1}{q}=1$), the transformation theorem (with transformation 
  $\Phi(t)=\frac{\amax^2 \eta^2 p^2 t}{\azero}$), formula \eqref{equ:1F1Formula2} (with $a=\frac{d+1}{2}$, $b=\frac{1}{2}$, $x=s$ and $a=\frac{d+2}{2}$, $b=\frac{3}{2}$, $x=s$) 
  and formula \eqref{equ:1F1Formula3} (with $\alpha=\frac{1}{2}$, $c=\frac{\azero\czero-\amax^2 \eta^2 p^2}{\amax^2 \eta^2 p^2}$, $a=-\frac{d}{2}$, $b=\frac{1}{2}$ 
  and $\alpha=1$, $c=\frac{\azero\czero-\amax^2 \eta^2 p^2}{\amax^2 \eta^2 p^2}$, $a=-\frac{d-1}{2}$, $b=\frac{3}{2}$ -- note that because of \eqref{cond:A2}, $\czero>0$ and 
  $\eta^2<\frac{\azero\czero}{\amax^2 p^2}$ we have $\Re c>0$) we obtain
  \begin{align*}
             & \int_{0}^{\infty}e^{-\Re\lambda t}C_5(t)dt \\
            =& \int_{0}^{\infty}C_{\theta}\kappa\left(\frac{\amax^2}{\amin\azero}\right)^{\frac{d+1}{2}}e^{-\czero t}\left(t\amin\right)^{-\frac{1}{2}}\bigg[\frac{\Gamma\left(\frac{d+1}{2}\right)}{\Gamma\left(\frac{d}{2}\right)}
               {}_1F_1\left(\frac{d+1}{2};\frac{1}{2};\frac{\amax^2 \eta^2 p^2 t}{\azero}\right) \\
             & +2\frac{\Gamma\left(\frac{d+2}{2}\right)}{\Gamma\left(\frac{d}{2}\right)}\left(\frac{\amax^2 \eta^2 p^2 t}{\azero}\right)^{\frac{1}{2}}
               {}_1F_1\left(\frac{d+2}{2};\frac{3}{2};\frac{\amax^2 \eta^2 p^2 t}{\azero}\right)\bigg]^{\frac{1}{p}}dt \\
    \leqslant& C_{\theta}\kappa\left(\frac{\amax^2}{\amin\azero}\right)^{\frac{d+1}{2}}\amin^{-\frac{1}{2}}\bigg(\int_{0}^{\infty}t^{-\frac{1}{2}}e^{-\czero t}dt\bigg)^{\frac{1}{q}} \\
             & \cdot\bigg(\frac{\Gamma\left(\frac{d+1}{2}\right)}{\Gamma\left(\frac{d}{2}\right)}\int_{0}^{\infty}t^{-\frac{1}{2}}e^{-\czero t}{}_1F_1\left(\frac{d+1}{2};\frac{1}{2};\frac{\amax^2 \eta^2 p^2 t}{\azero}\right)dt \\
             & +2\frac{\Gamma\left(\frac{d+2}{2}\right)}{\Gamma\left(\frac{d}{2}\right)}\left(\frac{\amax^2 \eta^2 p^2}{\azero}\right)^{\frac{1}{2}}
               \int_{0}^{\infty}e^{-\czero t}{}_1F_1\left(\frac{d+2}{2};\frac{3}{2};\frac{\amax^2 \eta^2 p^2 t}{\azero}\right)dt\bigg)^{\frac{1}{p}} \\
            =& C_{\theta}\kappa \aone^{\frac{d+1}{2}}\left(\left(\frac{1}{\czero}\right)^{\frac{1}{2}}\Gamma\left(\frac{1}{2}\right)\right)^{\frac{1}{q}}\amin^{-\frac{1}{2}} \\
             & \cdot\bigg(\frac{\Gamma\left(\frac{d+1}{2}\right)}{\Gamma\left(\frac{d}{2}\right)}\left(\frac{\amax^2 \eta^2 p^2}{\azero}\right)^{-\frac{1}{2}}
               \int_{0}^{\infty}s^{-\frac{1}{2}}e^{-\frac{\azero\czero}{\amax^2 \eta^2 p^2}s}{}_1F_1\left(\frac{d+1}{2};\frac{1}{2};s\right)ds \\
             & +2\frac{\Gamma\left(\frac{d+2}{2}\right)}{\Gamma\left(\frac{d}{2}\right)}\left(\frac{\amax^2 \eta^2 p^2}{\azero}\right)^{-\frac{1}{2}}
               \int_{0}^{\infty}e^{-\frac{\azero\czero}{\amax^2 \eta^2 p^2}s}{}_1F_1\left(\frac{d+2}{2};\frac{3}{2};s\right)ds\bigg)^{\frac{1}{p}} \\
            =& C_{\theta}\kappa \aone^{\frac{d+1}{2}}\left(\frac{1}{\czero}\right)^{\frac{1}{2}}\frac{\Gamma\left(\frac{1}{2}\right)}{\amin^{\frac{1}{2}}} \\
             & \cdot\bigg(\frac{\Gamma\left(\frac{d+1}{2}\right)}{\Gamma\left(\frac{1}{2}\right)\Gamma\left(\frac{d}{2}\right)}\left(\frac{\amax^2 \eta^2 p^2}{\azero\czero}\right)^{-\frac{1}{2}}
               \int_{0}^{\infty}s^{-\frac{1}{2}}e^{-\left(\frac{\azero\czero}{\amax^2 \eta^2 p^2}-1\right)s}{}_1F_1\left(-\frac{d}{2};\frac{1}{2};-s\right)ds \\
             & +2\frac{\Gamma\left(\frac{d+2}{2}\right)}{\Gamma\left(\frac{1}{2}\right)\Gamma\left(\frac{d}{2}\right)}\left(\frac{\amax^2 \eta^2 p^2}{\azero\czero}\right)^{-\frac{1}{2}}
               \int_{0}^{\infty}e^{-\left(\frac{\azero\czero}{\amax^2 \eta^2 p^2}-1\right)s}{}_1F_1\left(-\frac{d-1}{2};\frac{3}{2};-s\right)ds\bigg)^{\frac{1}{p}} \\
            =& C_{\theta}\kappa \aone^{\frac{d+1}{2}}\left(\frac{1}{\czero}\right)^{\frac{1}{2}}\frac{\Gamma\left(\frac{1}{2}\right)}{\amin^{\frac{1}{2}}}
               \left(\frac{\azero\czero}{\azero\czero-\amax^2 \eta^2 p^2}\right)^{\frac{1}{2p}} \\
             & \cdot\bigg(\frac{\Gamma\left(\frac{d+1}{2}\right)}{\Gamma\left(\frac{d}{2}\right)}{}_2F_1\left(-\frac{d}{2},\frac{1}{2};\frac{1}{2};-\frac{\amax^2 \eta^2 p^2}{\azero\czero-\amax^2 \eta^2 p^2}\right) \\
             & +2\frac{\Gamma\left(\frac{d+2}{2}\right)}{\Gamma\left(\frac{1}{2}\right)\Gamma\left(\frac{d}{2}\right)}\left(\frac{\amax^2 \eta^2 p^2}{\azero\czero-\amax^2 \eta^2 p^2}\right)^{\frac{1}{2}}
               {}_2F_1\left(-\frac{d-1}{2},1;\frac{3}{2};-\frac{\amax^2 \eta^2 p^2}{\azero\czero-\amax^2 \eta^2 p^2}\right)\bigg)^{\frac{1}{p}}.
  \end{align*}
  Finally, to obtain $C_8$ we use again that ${}_2F_1$ is strictly monotonically decreasing in $]-\infty,0]$ and the inequalities \eqref{equ:ThetaInequalities}.
\end{proof}

%
%
\sect{The Ornstein-Uhlenbeck semigroup in \texorpdfstring{$L^p_{\theta}(\R^d,\C^N)$}{L\_\{p,theta\}(Rd,CN)}}
\label{sec:TheOrnsteinUhlenbeckSemigroupInLp}

Recall the Ornstein-Uhlenbeck kernel
\begin{align*}
  H(x,\xi,t)=(4\pi t A)^{-\frac{d}{2}}\exp\left(-Bt-(4tA)^{-1}\left|e^{tS}x-\xi\right|^2\right)
\end{align*}
of the differential operator
\begin{align*}
  \left[\L_{\infty}v\right](x):=A\triangle v(x)+\left\langle Sx,\nabla v(x)\right\rangle-Bv(x)
\end{align*}
from Theorem \ref{thm:HeatKernel}. In the following we study the family of mappings
\begin{align}
  \left[T(t)v\right](x):= \begin{cases}
                              \int_{\R^d}H(x,\xi,t)v(\xi)d\xi &\text{, }t>0 \\
                              v(x) &\text{, }t=0
                            \end{cases}\quad ,x\in\R^d
  \label{equ:OrnsteinUhlenbeckSemigroup2}
\end{align}
on the (complex-valued) Banach space $(L^p_{\theta}(\R^d,\C^N),\left\|\cdot\right\|_{L^p_{\theta}})$ for $1\leqslant p\leqslant\infty$. 

The next two theorems show that the family of mappings $\left(T(t)\right)_{t\geqslant 0}$ generates a semigroup on $L^p_{\theta}(\R^d,\C^N)$ 
for every $1\leqslant p\leqslant\infty$. The proof of Theorem \ref{thm:OrnsteinUhlenbeckLpBoundedness} is taken from \cite[Theorem 5.3]{Otten2014} 
while the proof of Theorem \ref{thm:OrnsteinUhlenbeckLpSemigroup} extends \cite[Theorem 5.2]{Otten2014} to the weighted $L^p$-case. 
For later use, we keep track of the time dependence of constants in terms of Kummer functions.

\begin{theorem}[Boundedness on $L^p_{\theta}(\R^d,\C^N)$]\label{thm:OrnsteinUhlenbeckLpBoundedness}
  Let the assumptions \eqref{cond:A8B}--\eqref{cond:A5} be satisfied and let $1\leqslant p\leqslant\infty$. Then for every radial weight 
  function $\theta\in C(\R^d,\R)$ of exponential growth rate $\eta\geqslant 0$ and for every $v\in L^p_{\theta}(\R^d,\C^N)$ we have
  \begin{align}
    \left\|T(t)v\right\|_{L^p_{\theta}(\R^d,\C^N)}          &\leqslant C_4(t)\left\|v\right\|_{L^p_{\theta}(\R^d,\C^N)} &&,\,t\geqslant 0,          \label{equ:OrnsteinUhlenbeckLpBoundednessOfT}     \\
    \left\|D_i T(t)v\right\|_{L^p_{\theta}(\R^d,\C^N)}      &\leqslant C_5(t)\left\|v\right\|_{L^p_{\theta}(\R^d,\C^N)} &&,\,t>0,\,i=1,\ldots,d,    \label{equ:OrnsteinUhlenbeckLpBoundednessOfDiT}   \\
    \left\|D_j D_i T(t)v\right\|_{L^p_{\theta}(\R^d,\C^N)}  &\leqslant C_6(t)\left\|v\right\|_{L^p_{\theta}(\R^d,\C^N)} &&,\,t>0,\,i,j=1,\ldots,d,  \label{equ:OrnsteinUhlenbeckLpBoundednessOfDjDiT}
  \end{align}
  where the constants $C_{4+\left|\beta\right|}(t)=C_{4+\left|\beta\right|}(t;\bzero,p)$ for $\left|\beta\right|=0,1,2$ are given by
  \begin{align*}
    C_4(t) =& C_{\theta}\kappa\aone^{\frac{d}{2}} e^{-\bzero t} \bigg[{}_1F_1\left(\frac{d}{2};\frac{1}{2};\nu t\right)
              +2\frac{\Gamma\left(\frac{d+1}{2}\right)}{\Gamma\left(\frac{d}{2}\right)}\left(\nu t\right)^{\frac{1}{2}}{}_1F_1\left(\frac{d+1}{2};\frac{3}{2};\nu t\right)\bigg]^{\frac{1}{p}},\\
    C_5(t) =& C_{\theta}\kappa\aone^{\frac{d+1}{2}} e^{-\bzero t} \left(t\amin\right)^{-\frac{1}{2}}\bigg[\frac{\Gamma\left(\frac{d+1}{2}\right)}{\Gamma\left(\frac{d}{2}\right)}
               {}_1F_1\left(\frac{d+1}{2};\frac{1}{2};\nu t\right) \\
            & +2\frac{\Gamma\left(\frac{d+2}{2}\right)}{\Gamma\left(\frac{d}{2}\right)}\left(\nu t\right)^{\frac{1}{2}}{}_1F_1\left(\frac{d+2}{2};\frac{3}{2};\nu t\right)\bigg]^{\frac{1}{p}},\\
    C_6(t) =& C_{\theta}\kappa\aone^{\frac{d+2}{2}} e^{-\bzero t} \left(t\amin\right)^{-1}\bigg[
              \frac{\Gamma\left(\frac{d+2}{2}\right)}{\Gamma\left(\frac{d}{2}\right)}{}_1F_1\left(\frac{d+2}{2};\frac{1}{2};\nu t\right) \\
            & +2\frac{\Gamma\left(\frac{d+3}{2}\right)}{\Gamma\left(\frac{d}{2}\right)}\left(\nu t\right)^{\frac{1}{2}}{}_1F_1\left(\frac{d+3}{2};\frac{3}{2};\nu t\right)
              +\frac{\delta_{ij}}{2}\aone^{-1}{}_1F_1\left(\frac{d}{2};\frac{1}{2};\nu t\right) \\
            & +\delta_{ij}\aone^{-1}\frac{\Gamma\left(\frac{d+1}{2}\right)}{\Gamma\left(\frac{d}{2}\right)}\left(\nu t\right)^{\frac{1}{2}}
              {}_1F_1\left(\frac{d+1}{2};\frac{3}{2};\nu t\right)\bigg]^{\frac{1}{p}},
  \end{align*}
  with $\aone:=\frac{\amax^2}{\amin\azero}\geqslant 1$, $\nu:=\frac{\amax^2 \eta^2 p^2}{\azero}\geqslant 0$, $\kappa:=\cond(Y)$ with $Y$ from \eqref{cond:A8B} 
  and $\amax,\amin,\azero,\bzero$ defined in \eqref{equ:aminamaxazerobzero}, if $1\leqslant p<\infty$. In case $p=\infty$ they are given by $C_{4+\left|\beta\right|}(t)$ with $p=1$. 
  Note that $C_{4+\left|\beta\right|}(t)\sim t^{\frac{-p\left|\beta\right|+d+\left|\beta\right|-1}{2p}} e^{-\left(\bzero-\frac{\nu}{p}\right)t}$ as $t\to\infty$ 
  and $C_{4+\left|\beta\right|}(t)\sim t^{-\frac{\left|\beta\right|}{2}}$ as $t\to 0$ for every $\left|\beta\right|=0,1,2$.
\end{theorem}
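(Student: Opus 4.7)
The plan is to reduce each of the three bounds to integrals over the radial kernels $K,K^i,K^{ji}$ that were already estimated in Lemma \ref{lem:PropK}. First, for $t>0$ and $x\in\R^d$, the substitution $\psi=e^{tS}x-\xi$ converts the action of the semigroup into
\begin{align*}
  [T(t)v](x) = \int_{\R^d} K(\psi,t)\,v(e^{tS}x-\psi)\,d\psi,
\end{align*}
with the analogous representations
\begin{align*}
  [D_i T(t)v](x) = \int_{\R^d} K^i(\psi,t)\,v(e^{tS}x-\psi)\,d\psi,\qquad [D_jD_i T(t)v](x) = \int_{\R^d} K^{ji}(\psi,t)\,v(e^{tS}x-\psi)\,d\psi,
\end{align*}
where the interchange of differentiation and integration is justified by the Gaussian decay of $K$ and its first and second derivatives together with dominated convergence. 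Because $\theta$ is radial and $S$ is skew-symmetric one has $\theta\circ e^{tS}=\theta$ and $|e^{-tS}\psi|=|\psi|$, so applying property \eqref{equ:WeightFunctionProp2} at the point $e^{tS}x-\psi$ with increment $e^{-tS}\psi$ yields the crucial pointwise bound $\theta(x)\le C_\theta\,e^{\eta|\psi|}\,\theta(e^{tS}x-\psi)$ uniformly in $x,\psi\in\R^d$.

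For the case $1\le p<\infty$ I would then apply H\"older's inequality with conjugate exponents $p$ and $q$ to the positive measure $|K^{\beta}(\psi,t)|\,d\psi$ (equivalently, Jensen's inequality against the normalized measure). Combined with the weight bound raised to the power $p$, this produces
\begin{align*}
  \bigl|\theta(x)[D^{\beta} T(t)v](x)\bigr|^{p} \le C_\theta^{p}\Bigl(\int_{\R^d}|K^{\beta}|\,d\psi\Bigr)^{p/q}\!\int_{\R^d}|K^{\beta}|\,e^{p\eta|\psi|}\bigl|\theta(e^{tS}x-\psi)v(e^{tS}x-\psi)\bigr|^{p}d\psi
\end{align*}
for each multi-index $\beta$ with $|\beta|\le 2$, where I write $K^{\beta}$ for $K,K^i,K^{ji}$ according to $|\beta|=0,1,2$. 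Integrating over $x\in\R^d$, exchanging order by Fubini, and performing the unimodular substitution $y=e^{tS}x-\psi$ in the inner $x$-integral gives
\begin{align*}
  \|D^{\beta}T(t)v\|_{L^{p}_{\theta}}^{p} \le C_\theta^{p}\,\|K^{\beta}(\cdot,t)\|_{L^1}^{p/q}\,\bigg(\int_{\R^d}|K^{\beta}(\psi,t)|\,e^{p\eta|\psi|}\,d\psi\bigg)\,\|v\|_{L^{p}_{\theta}}^{p}.
\end{align*}
Taking $p$-th roots and invoking Lemma \ref{lem:PropK} twice, once with $\eta=0$ to control $\|K^{\beta}\|_{L^1}$ and once with the given $\eta$ and exponent $p$ to control the weighted integral (producing exactly $\nu=\amax^2\eta^2 p^2/\azero$ inside the Kummer functions), will deliver the constants $C_4(t),C_5(t),C_6(t)$ in the stated product form: prefactor to the first power, Kummer bracket to the $1/p$-th power. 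The limit case $p=\infty$ is handled separately by taking the supremum in $x$ before the weight bound, which reduces the task to controlling $\int_{\R^d}|K^{\beta}|\,e^{\eta|\psi|}\,d\psi$ via Lemma \ref{lem:PropK} with lemma-parameter $p_{L}=1$, reproducing $C_{4+|\beta|}(t)$ with $p=1$ substituted.

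The small-time asymptotics $C_{4+|\beta|}(t)\sim t^{-|\beta|/2}$ follow at once from ${}_1F_1(a;b;0)=1$ together with the explicit $(t\amin)^{-|\beta|/2}$ prefactor. The large-time asymptotics follow from the limiting form \eqref{equ:1F1Formula4}, $\,{}_1F_1(a;b;\nu t)\sim\Gamma(b)/\Gamma(a)\,(\nu t)^{a-b}e^{\nu t}$, which turns the Kummer bracket into an expression of order $t^{(d-1)/2}e^{\nu t}$; taking the $1/p$-th root and combining with $e^{-\bzero t}$ produces the claimed decay rate $\bzero-\nu/p$ and power $(d-1)/(2p)-|\beta|/2$. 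The main obstacle is the bookkeeping that coordinates the $\|K^{\beta}\|_{L^1}^{1/q}$ prefactor with the $1/p$-th power of the weighted integral so that the gamma ratios $\Gamma((d+\cdot)/2)/\Gamma(d/2)$ and the explicit powers $\aone^{(d+|\beta|)/2}(t\amin)^{-|\beta|/2}$ recombine exactly into the displayed $C_{4+|\beta|}(t)$; a second, purely technical point is justifying the interchange of $D_iD_j$ with the integral for $|\beta|=2$, which requires dominated-convergence majorants involving the quadratic factor $\langle\psi,e^{tS}e_i\rangle\langle\psi,e^{tS}e_j\rangle$ in $K^{ji}$, but this is routine given the Gaussian decay of $K$.
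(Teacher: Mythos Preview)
Your proposal is correct and follows essentially the same route as the paper: substitute $\psi=e^{tS}x-\xi$ to pass to the radial kernels $K^{\beta}$, use radiality of $\theta$ together with \eqref{equ:WeightFunctionProp2} to move the weight onto $v(e^{tS}x-\psi)$, apply H\"older with exponents $p,q$ against the measure $|K^{\beta}|\,d\psi$, then Fubini and the unimodular change $y=e^{tS}x-\psi$, and finish with Lemma~\ref{lem:PropK} (once with $\eta=0$ for $\|K^{\beta}\|_{L^1}^{1/q}$, once with the given $\eta,p$ for the weighted piece). The paper's proof is exactly this computation, and your observation that the prefactor $\kappa\,\aone^{(d+|\beta|)/2}e^{-\bzero t}(t\amin)^{-|\beta|/2}$ recombines to the first power while the Kummer bracket survives to the $1/p$-th power is precisely how the final inequality $\leqslant C_{4+|\beta|}(t)\|v\|_{L^p_{\theta}}$ is obtained; one small wording slip is that the increment in \eqref{equ:WeightFunctionProp2} should be $\psi$ (after using $\theta(x)=\theta(e^{tS}x)$) rather than $e^{-tS}\psi$, but since $|e^{-tS}\psi|=|\psi|$ your stated bound is unaffected.
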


\begin{proof}
  In the following $\beta\in\N_0^d$ denotes a $d$-dimensional multi-index with $|\beta|=0,1,2$ and we use the notation
  \begin{align*}
    D^{\beta}v = \begin{cases}v&{, }|\beta|=0\\ D_iv&{, }|\beta|=1\\ D_jD_iv&{, }|\beta|=2\end{cases},\,
    D^{\beta}H = \begin{cases}H&{, }|\beta|=0\\ D_iH&{, }|\beta|=1\\ D_jD_iH&{, }|\beta|=2\end{cases},\,
    K^{\beta}  = \begin{cases}K&{, }|\beta|=0\\  K^i&{, }|\beta|=1\\  K^{ji}&{, }|\beta|=2\end{cases}
  \end{align*}
  where $i,j=1,\ldots,d$. Moreover, the kernels $K$, $K^i$ and $K^{ji}$ are from \eqref{equ:K}, \eqref{equ:Ki} and \eqref{equ:Kji}. To show \eqref{equ:OrnsteinUhlenbeckLpBoundednessOfT}, 
  \eqref{equ:OrnsteinUhlenbeckLpBoundednessOfDiT} and \eqref{equ:OrnsteinUhlenbeckLpBoundednessOfDjDiT} for $v\in L^p_{\theta}(\R^d,\C^N)$ with $1\leqslant p<\infty$, we use 
  \eqref{equ:OrnsteinUhlenbeckSemigroup2}, the transformation theorem (with transformations $\Phi(\xi)=e^{tS}x-\xi$ in $\xi$, $\Phi(x)=e^{tS}x-\psi$ in $x$), \eqref{equ:K}, \eqref{equ:Ki}, 
  \eqref{equ:Kji}, the triangle inequality, H{\"o}lder's inequality (with $q\in]1,\infty]$ such that $\frac{1}{p}+\frac{1}{q}=1$), Fubini's theorem, 
  \eqref{equ:WeightFunctionProp1}--\eqref{equ:WeightFunctionProp3} and Lemma \ref{lem:PropK}(1),(2),(3) to obtain
  \begin{align*}
             & \left\|D^{\beta}T(t)v\right\|_{L^p_{\theta}}
            =  \left(\int_{\R^d}\theta^p(x)\left|D^{\beta}\left[T(t)v\right](x)\right|^p dx\right)^{\frac{1}{p}} \\
            =& \left(\int_{\R^d}\theta^p(x)\left|\int_{\R^d}\left[D^{\beta} H(x,\xi,t)\right]v(\xi)d\xi\right|^p dx\right)^{\frac{1}{p}} \\
            =& \left(\int_{\R^d}\theta^p(x)\left|\int_{\R^d}K^{\beta}(\psi,t)v(e^{tS}x-\psi)d\psi\right|^p dx\right)^{\frac{1}{p}} \\        
    \leqslant& \left(\int_{\R^d}\left(\int_{\R^d}\theta(x)\left|K^{\beta}(\psi,t)\right| \left|v(e^{tS}x-\psi)\right|d\psi\right)^p dx\right)^{\frac{1}{p}} \\
    \leqslant& \left(\int_{\R^d}\left(\int_{\R^d}\left|K^{\beta}(\psi,t)\right| d\psi\right)^{\frac{p}{q}}
               \int_{\R^d}\left|K^{\beta}(\psi,t)\right| \theta^p(x)\left|v(e^{tS}x-\psi)\right|^p d\psi dx\right)^{\frac{1}{p}} \\
            =& \left\|K^{\beta}(\cdot,t)\right\|_{L^1}^{\frac{1}{q}}
               \left(\int_{\R^d}\left|K^{\beta}(\psi,t)\right| \int_{\R^d}\theta^p(x)\left|v(e^{tS}x-\psi)\right|^p dx d\psi\right)^{\frac{1}{p}} \\
            =& \left\|K^{\beta}(\cdot,t)\right\|_{L^1}^{\frac{1}{q}}
               \left(\int_{\R^d}\left|K^{\beta}(\psi,t)\right| \int_{\R^d}\theta^p(e^{-tS}(y+\psi))\left|v(y)\right|^p dy d\psi\right)^{\frac{1}{p}} \\
    \leqslant& C_{\theta}\left\|K^{\beta}(\cdot,t)\right\|_{L^1}^{\frac{p-1}{p}}
               \left\|e^{\eta p|\cdot|}K^{\beta}(\cdot,t)\right\|_{L^1}^{\frac{1}{p}}
               \left\|v\right\|_{L^p_{\theta}}
    \leqslant  C_{4+|\beta|}(t)\left\|v\right\|_{L^p_{\theta}}
  \end{align*}
  for $t\geqslant 0$, if $|\beta|=0$, and for $t>0$, if $|\beta|=1$ or $|\beta|=2$. For $p=\infty$ the proof is very similar and can be found later in Theorem 
  \ref{thm:OrnsteinUhlenbeckCbBoundedness}.
\end{proof}

\begin{theorem}[Semigroup on $L^p_{\theta}(\R^d,\C^N)$]\label{thm:OrnsteinUhlenbeckLpSemigroup}
  Let the assumptions \eqref{cond:A8B}--\eqref{cond:A5} be satisfied and let $1\leqslant p\leqslant\infty$. Moreover, let $\theta\in C(\R^d,\R)$ be a radial 
  weight function of exponential growth rate $\eta\geqslant 0$. Then the operators $\left(T(t)\right)_{t\geqslant 0}$ given by \eqref{equ:OrnsteinUhlenbeckSemigroup2} generate 
  a semigroup on $L^p_{\theta}(\R^d,\C^N)$, i.e. $T(t):L^p_{\theta}(\R^d,\C^N)\rightarrow L^p_{\theta}(\R^d,\C^N)$ is linear and bounded for every 
  $t\geqslant 0$ and satisfies the semigroup properties
  \begin{align}
    &T(0) = I,                                        \label{equ:InitalProp} \\
    &T(t)T(s) = T(t+s),\,\forall\,s,t\geqslant 0. \label{equ:KozyklProp}
  \end{align}
\end{theorem}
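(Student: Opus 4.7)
The plan is to verify the three semigroup properties separately. Linearity of $T(t)$ for each $t \geqslant 0$ is immediate from the integral form \eqref{equ:OrnsteinUhlenbeckSemigroup2}. The initial condition \eqref{equ:InitalProp} holds by definition, and boundedness $T(t): L^p_{\theta}(\R^d, \C^N) \to L^p_{\theta}(\R^d, \C^N)$ together with the explicit estimate $\|T(t)v\|_{L^p_\theta} \leqslant C_4(t)\|v\|_{L^p_\theta}$ has already been established in Theorem \ref{thm:OrnsteinUhlenbeckLpBoundedness}. Thus the only substantive task is the cocycle identity \eqref{equ:KozyklProp}.

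For the cocycle property the cases $s = 0$ and $t = 0$ follow at once from \eqref{equ:InitalProp}, so fix $s, t > 0$ and $v \in L^p_\theta(\R^d, \C^N)$. I would write
\begin{align*}
  [T(t)(T(s)v)](x) = \int_{\R^d} H(x, \tilde{\xi}, t) \int_{\R^d} H(\tilde{\xi}, \xi, s) v(\xi) \, d\xi \, d\tilde{\xi},
\end{align*}
interchange the order of integration by Fubini's theorem, and apply the Chapman-Kolmogorov formula (Lemma \ref{lem:ChapmanKolmogorovFormula}) to collapse the inner $\tilde{\xi}$-integral to $H(x, \xi, t+s)$, which produces exactly $[T(t+s)v](x)$.

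The main obstacle is justifying Fubini's theorem in the exponentially weighted setting. To this end I would dominate the integrand pointwise using \eqref{equ:SpectralNormK} combined with the scalar Gaussian bound \eqref{equ:SpectralNormKtilde}, so that $|H(x, \tilde{\xi}, t)|\,|H(\tilde{\xi}, \xi, s)|\,|v(\xi)|$ is controlled by the product of two positive scalar real Gaussians (in the arguments $e^{tS}x - \tilde{\xi}$ and $e^{sS}\tilde{\xi} - \xi$) times $|v(\xi)|$. Since $|e^{\tau S} z| = |z|$ by \eqref{cond:A5}, a change of variables $\zeta = e^{sS}\tilde{\xi}$ reduces the inner integral to a standard Gaussian convolution, which by the formula \eqref{equ:integralformula} produces another scalar Gaussian in $e^{(t+s)S} x - \xi$. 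The resulting single integral against $|v(\xi)|$ is then finite in the weighted $L^p$-sense by exactly the reasoning used in the proof of Theorem \ref{thm:OrnsteinUhlenbeckLpBoundedness}. This justifies Fubini and closes the proof; for $p = \infty$ the argument proceeds analogously but pointwise in $x$, using the $p = 1$ constants of Theorem \ref{thm:OrnsteinUhlenbeckLpBoundedness}.
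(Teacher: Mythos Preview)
Your proposal is correct and follows essentially the same approach as the paper: boundedness is quoted from Theorem~\ref{thm:OrnsteinUhlenbeckLpBoundedness}, linearity and \eqref{equ:InitalProp} are immediate, and the cocycle property \eqref{equ:KozyklProp} is obtained by interchanging the order of integration and invoking the Chapman--Kolmogorov formula (Lemma~\ref{lem:ChapmanKolmogorovFormula}). The paper's proof simply writes down this chain of equalities without justifying Fubini; your added paragraph supplying the Gaussian domination argument is a welcome elaboration rather than a genuinely different route.
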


\begin{proof}
  The boundedness of $T(t)$ in $L^p_{\theta}(\R^d,\C^N)$ for every $t\geqslant 0$ follows from \eqref{equ:OrnsteinUhlenbeckLpBoundednessOfT}. The linearity and property 
  \eqref{equ:InitalProp} follow obviously from the definition of $T(t)$ in \eqref{equ:OrnsteinUhlenbeckSemigroup2}. To prove property \eqref{equ:KozyklProp} we use 
  \eqref{equ:OrnsteinUhlenbeckSemigroup2} and apply Lemma \ref{lem:ChapmanKolmogorovFormula} and Fubini's theorem
  \begin{align*}
     \left[T(t)\left(T(s)v\right)\right](x)
    =&\int_{\R^d}H(x,\tilde{\xi},t)\int_{\R^d}H(\tilde{\xi},\xi,s)v(\xi)d\xi d\tilde{\xi} \\
    =&\int_{\R^d}\int_{\R^d}H(x,\tilde{\xi},t)H(\tilde{\xi},\xi,s)d\tilde{\xi}v(\xi)d\xi \\
    =&\int_{\R^d}H(x,\xi,t+s)v(\xi)d\xi = \left[T(t+s)v\right](x),\, x\in\R^d.
  \end{align*}
\end{proof}

The next theorem states that the semigroup $\left(T(t)\right)_{t\geqslant 0}$ is strongly continuous on $L^p_{\theta}(\R^d,\C^N)$ for every $1\leqslant p<\infty$, 
if we additionally require \eqref{equ:WeightFunctionProp4}. Recall that property \eqref{equ:WeightFunctionProp5} follows directly from \eqref{equ:WeightFunctionProp3} and 
strong continuity justifies to introduce the infinitesimal generator. If $\theta$ satisfies only \eqref{equ:WeightFunctionProp1}--\eqref{equ:WeightFunctionProp3} 
then $\left(T(t)\right)_{t\geqslant 0}$ is in general neither strongly nor weakly continuous on $L^p_{\theta}(\R^d,\C^N)$, see Remark \ref{rem:RemarkLpFunctionSpace} below. 
Only in the unweighted $L^p$-case is the semigroup $\left(T(t)\right)_{t\geqslant 0}$ strongly continuous on $L^p(\R^d,\C^N)$ for every $1\leqslant p<\infty$, 
\cite[Theorem 5.3]{Otten2014}. In this case the $L^p$-continuity of translations follows from \cite[Satz 2.14(1)]{Alt2006} and the $L^p$-continuity of rotations follows from 
a density argument. The following result is an extension of \cite[Proof of Theorem 5.3]{Otten2014}.

\begin{theorem}[Strong continuity on $L^p_{\theta}(\R^d,\C^N)$]\label{thm:OrnsteinUhlenbeckLpStrongContinuity}
  Let the assumptions \eqref{cond:A8B}--\eqref{cond:A5} be satisfied and let $1\leqslant p\leqslant\infty$. Moreover, let $\theta\in C(\R^d,\R)$ be a radial 
  weight function of exponential growth rate $\eta\geqslant 0$ satisfying \eqref{equ:WeightFunctionProp4}. Then $\left(T(t)\right)_{t\geqslant 0}$ is a $C_0$-semigroup 
 (or strongly continuous semigroup) on $L^p_{\theta}(\R^d,\C^N)$, i.e.
  \begin{align}
    \lim_{t\downarrow 0}\left\|T(t)v-v\right\|_{L^p_{\theta}(\R^d,\C^N)}=0\quad\forall\,v\in L^p_{\theta}(\R^d,\C^N). \label{equ:StrongCont}
  \end{align}
\end{theorem}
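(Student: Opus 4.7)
I would prove strong continuity only for $1\leqslant p<\infty$ (the $p=\infty$ case fails, as $C_c^{\infty}$ is not dense in $L^{\infty}_{\theta}$; the statement as printed appears to contain a typo in the upper bound on $p$), via the standard $\varepsilon/3$ reduction to a dense subspace. Theorem \ref{thm:OrnsteinUhlenbeckLpBoundedness} yields uniform boundedness of $T(t)$ on every $[0,\tau]$: the Kummer functions satisfy $_1F_1(a;b;0)=1$, so $C_4(t)$ is continuous at $t=0$. Density of $C_c^{\infty}(\R^d,\C^N)$ in $L^p_{\theta}(\R^d,\C^N)$ for $1\leqslant p<\infty$ (standard, via truncation, mollification, and local boundedness of $\theta$) reduces the task to showing $\|T(t)w-w\|_{L^p_{\theta}}\to 0$ for each fixed $w\in C_c^{\infty}(\R^d,\C^N)$.

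\textbf{Decomposition.} For such a $w$, I would perform the change of variables $\xi=e^{tS}x-\psi$ in \eqref{equ:OrnsteinUhlenbeckSemigroup2} to obtain $[T(t)w](x)=\int_{\R^d}K(\psi,t)w(e^{tS}x-\psi)\,d\psi$ with $K$ from \eqref{equ:K}, and invoke the normalization $\int_{\R^d}K(\psi,t)\,d\psi=e^{-Bt}$ from Lemma \ref{lem:PropK2}(1) to split
\begin{align*}
  [T(t)w](x)-w(x) &= \underbrace{\int_{\R^d}K(\psi,t)\bigl[w(e^{tS}x-\psi)-w(e^{tS}x)\bigr]d\psi}_{=:I_1(x,t)} \\
                  &\quad + \underbrace{(e^{-Bt}-I_N)w(e^{tS}x)}_{=:I_2(x,t)} + \underbrace{w(e^{tS}x)-w(x)}_{=:I_3(x,t)}.
\end{align*}
Since $w$ has compact support, $e^{tS}$ preserves norms hence compact sets, $\theta$ is locally bounded, $e^{-Bt}\to I_N$, and $w$ is uniformly continuous, a straightforward dominated-convergence argument confined to a fixed compact set gives $\|I_2\|_{L^p_{\theta}},\|I_3\|_{L^p_{\theta}}\to 0$ as $t\downarrow 0$. (The $L^p_{\theta}$-isometry $w\mapsto w(e^{tS}\cdot)$, valid because $\theta$ is radial, is used for $I_2$.)

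\textbf{Main term.} For $I_1$, Minkowski's integral inequality yields
\begin{align*}
  \|I_1(\cdot,t)\|_{L^p_{\theta}} \leqslant \int_{\R^d}|K(\psi,t)|\cdot\|w(e^{tS}\cdot-\psi)-w(e^{tS}\cdot)\|_{L^p_{\theta}}\,d\psi,
\end{align*}
and using the $L^p_{\theta}$-isometry induced by $e^{tS}$ one more time reduces the inner norm to $\|w(\cdot-\psi)-w(\cdot)\|_{L^p_{\theta}}$. I would then split the integral at $|\psi|=\delta$: on $\{|\psi|\leqslant\delta\}$, property \eqref{equ:WeightFunctionProp4} together with $w\in C_c^{\infty}$ gives $\sup_{|\psi|\leqslant\delta}\|w(\cdot-\psi)-w\|_{L^p_{\theta}}\to 0$ as $\delta\to 0$, while $\int|K(\psi,t)|\,d\psi\leqslant C_1(t)$ is uniformly bounded near $t=0$ by Lemma \ref{lem:PropK}(1); on $\{|\psi|>\delta\}$, property \eqref{equ:WeightFunctionProp2} yields the bound $\|w(\cdot-\psi)-w\|_{L^p_{\theta}}\leqslant(1+C_{\theta}e^{\eta|\psi|})\|w\|_{L^p_{\theta}}$.

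\textbf{Main obstacle.} The technical heart of the argument is the tail estimate
\begin{align*}
  \int_{|\psi|>\delta}|K(\psi,t)|\,e^{\eta|\psi|}\,d\psi\;\longrightarrow\;0 \quad\text{as } t\downarrow 0
\end{align*}
for each fixed $\delta>0$. This is not directly implied by Lemma \ref{lem:PropK}, which only bounds the whole integral uniformly in $t$; instead it must be read off from the explicit Gaussian-type bound $|K(\psi,t)|\leqslant C\,t^{-d/2}e^{-c|\psi|^2/t}$ furnished by \eqref{cond:A2}, via the rescaling $\psi=t^{1/2}z$ (which maps $\{|\psi|>\delta\}$ to $\{|z|>\delta t^{-1/2}\}$) and dominated convergence against $Ce^{-c|z|^2+\eta|z|}$. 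Once $\delta$ is chosen small enough to make the near-zero contribution to $\|I_1\|_{L^p_{\theta}}$ below $\varepsilon$, letting $t\downarrow 0$ kills the tail as well as $I_2$ and $I_3$, yielding \eqref{equ:StrongCont}.
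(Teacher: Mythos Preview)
Your proof is correct and follows essentially the same route as the paper: the three-term decomposition into a rotation term $I_3$, the $(e^{-Bt}-I_N)$ term $I_2$, and the translation-inside-the-integral term $I_1$ is exactly the paper's $v_2$, $v_4$, $v_3$, and the crucial tail estimate $\int_{|\psi|>\delta}e^{\eta|\psi|}|K(\psi,t)|\,d\psi\to 0$ is also the heart of the paper's argument (computed there explicitly in polar coordinates as $\tilde{C}(t,\eta p,\delta_0)$).

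The one organizational difference is that you reduce to $w\in C_c^{\infty}$ at the outset via uniform boundedness of $T(t)$ and density, whereas the paper works with a general $v\in L^p_{\theta}$ throughout and instead proves, as intermediate lemmas, $L^p_{\theta}$-continuity of translations (this is where \eqref{equ:WeightFunctionProp4} enters in the paper) and of rotations (via a separate density argument). Your upfront reduction is cleaner and, as you implicitly observe, shows that for compactly supported $w$ one does not really need \eqref{equ:WeightFunctionProp4} at all---a point the paper acknowledges in the remark following the proof. Your observation that the $p=\infty$ case fails and the stated range $1\leqslant p\leqslant\infty$ is a typo is also correct; the paper's proof is only for $1\leqslant p<\infty$.
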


\begin{proof} 
  1. Let us define the (\begriff{$d$-dimensional}) \begriff{diffusion semigroup} (\begriff{Gaussian semigroup}, \begriff{heat semigroup})
  \begin{align}
    \label{equ:DiffusionSemigroup}
    \begin{split}
      \left[G(t,0)v\right](y) :=& \int_{\R^d}H(e^{-tS}y,\xi,t)v(\xi)d\xi \\
                               =& \int_{\R^d}\left(4\pi tA\right)^{-\frac{d}{2}}\exp\left(-Bt-\left(4tA\right)^{-1}\left|y-\xi\right|^2\right)v(\xi)d\xi,\,t>0,
    \end{split}
  \end{align}
  then we have $\left[T(t)v\right](x)=\left[G(t,0)v\right](e^{tS}x)$. Following \cite{DaPratoLunardi1995}, we consider the decomposition
  \begin{align*}
               \left\|T(t)v-v\right\|_{L^p_{\theta}}
    \leqslant &\left\|\left[G(t,0)v\right](e^{tS}\cdot)-v(e^{tS}\cdot)\right\|_{L^p_{\theta}}+\left\|v(e^{tS}\cdot)-v(\cdot)\right\|_{L^p_{\theta}} \\
           =: &\left\|v_1(\cdot,t)\right\|_{L^p_{\theta}} + \left\|v_2(\cdot,t)\right\|_{L^p_{\theta}}.
  \end{align*}

  \noindent
  2. First we compute the $v_1$-term: Applying the transformation theorem (with transformation $\Phi(x)=e^{tS}x$) and using \eqref{equ:WeightFunctionProp3} 
  and \eqref{cond:A5} we decompose $v_1$ as follows
  \begin{align*}
              & \left\|v_1(\cdot,t)\right\|_{L^p_{\theta}}
             =  \left\|\theta(\cdot)\left(\left[G(t,0)v\right](e^{tS}\cdot)-v(e^{tS}\cdot)\right)\right\|_{L^p} \\
             =& \left\|\theta(e^{-tS}\cdot)\left(\left[G(t,0)v\right](\cdot)-v(\cdot)\right)\right\|_{L^p}
             =  \left\|\left[G(t,0)v\right](\cdot)-v(\cdot)\right\|_{L^p_{\theta}} \\
     \leqslant& \left\|\int_{\R^d}H(e^{-tS}\cdot,\xi,t)\left(v(\xi)-v(\cdot)\right)d\xi\right\|_{L^p_{\theta}} 
                + \left\|\left(\int_{\R^d}H(e^{-tS}\cdot,\xi,t)d\xi-I_N\right)v(\cdot)\right\|_{L^p_{\theta}} \\
            =:& \left\|v_3(\cdot,t)\right\|_{L^p_{\theta}} + \left\|v_4(\cdot,t)\right\|_{L^p_{\theta}}.
  \end{align*}

  \noindent
  3. Let us consider the $v_4$-term: Applying the transformation theorem (with transformation $\Phi(\xi)=y-\xi$) and Lemma \ref{lem:PropK2}(1), we obtain
  \begin{align*}
             & \left\|v_4(\cdot,t)\right\|_{L^p_{\theta}} 
            =  \left(\int_{\R^d}\left|\theta(y)\left(\int_{\R^d}H(e^{-tS}y,\xi,t)d\xi - I_N\right)v(y)\right|^p dy\right)^{\frac{1}{p}} \\
            =& \left(\int_{\R^d}\left|\theta(y)\left(\int_{\R^d}K(\psi,t)d\psi - I_N\right)v(y)\right|^p dy\right)^{\frac{1}{p}} 
    \leqslant  \left|e^{-tB} - I_N\right| \left\|v\right\|_{L^p_{\theta}},\,t>0.
  \end{align*}
  Therefore, $\lim_{t\downarrow 0}\left\|v_4(\cdot,t)\right\|_{L^p_{\theta}}=0$.

  \noindent
  4. The $v_3$-term is much more delicate: First, we need the following integral for some constant $\delta_0>0$, compare proof of Lemma \ref{lem:PropK},
  \begin{align*}
             & \int_{\left|\psi\right|\geqslant\delta_0}e^{\eta p|\psi|}\left|K(\psi,t)\right| d\psi 
    \leqslant \int_{\left|\psi\right|\geqslant\delta_0}\kappa\left(4\pi t \amin\right)^{-\frac{d}{2}}
               e^{-\bzero t-\frac{\azero}{4t\amax^2}\left|\psi\right|^2+\eta p|\psi|}d\psi \\
            =& \kappa\left(4\pi t \amin\right)^{-\frac{d}{2}} e^{-\bzero t} \frac{2\pi^{\frac{d}{2}}}{\Gamma\left(\frac{d}{2}\right)}
               \int_{\delta_0}^{\infty}r^{d-1}e^{-\frac{\azero}{4t\amax^2}r^2-\eta p r}dr \\
            =& \kappa\left(\frac{\amax^2}{\amin\azero}\right)^{\frac{d}{2}}e^{-\bzero t}\frac{2}{\Gamma\left(\frac{d}{2}\right)}
               \int_{\left(\frac{\azero}{4t\amax^2}\right)^{\frac{1}{2}}\delta_0}^{\infty}s^{d-1}e^{-s^2+\left(\frac{4\amax^2 \eta^2 p^2 t}{\azero}\right)^{\frac{1}{2}}s}ds 
           =: \tilde{C}(t,\eta p,\delta_0),
  \end{align*}
  where we used the transformation theorem (with transformations for $d$-dimensional polar coordinates and 
  $\Phi(r)=\left(\frac{\azero}{4t\amax^2}\right)^{\frac{1}{2}}r$). Note, that $\tilde{C}(t,\eta p,\delta_0)\to 0$ as $t\downarrow 0$ for every fixed $\delta_0>0$ and $\eta, p\in\R$. 
  Moreover, we need $L^p_{\theta}$-continuity (at $0$) w.r.t. translations which states that
  \begin{align}
    \label{equ:LpthetaContinuityTranslation}
    \forall\,\varepsilon_0>0\;\exists\,\delta_0>0\;\forall\,\psi\in\R^d\text{ with }|\psi|\leqslant\delta_0:\;
    \left\|v(\cdot-\psi)-v(\cdot)\right\|_{L^p_{\theta}}\leqslant\varepsilon_0.
  \end{align}
  This was proved in \cite[Satz 2.14(1)]{Alt2006} for the unweighted $L^p$-case, i.e. $\theta\equiv 1$ and $C_{\theta}=1$. In the weighted case we use assumption \eqref{equ:WeightFunctionProp4} 
  which yields
  \begin{align}
    \label{equ:WeightFunctionConvergence}
    \forall\,\varepsilon_1>0\;\exists\,\delta_1>0\;\forall\,\psi\in\R^d\text{ with }|\psi|\leqslant\delta_1:\;
    \sup_{x\in\R^d}\left|\frac{\theta(x+\psi)-\theta(x)}{\theta(x)}\right|\leqslant\varepsilon_1.
  \end{align}
  Now, let $\varepsilon>0$ and choose $\delta:=\min\{\delta_0,\delta_1\}$ with $\delta_0$ from \eqref{equ:LpthetaContinuityTranslation} (for $\theta\equiv 1$ and 
  $\varepsilon_0=\frac{\varepsilon}{2}$) and $\delta_1$ from \eqref{equ:WeightFunctionConvergence} (with $\varepsilon_1:=\frac{\varepsilon}{2\left\|w\right\|_{L^p}}$ and 
  $w:=\theta v\in L^p(\R^d,C^N)$) then we obtain the $L^p_{\theta}$-continuity from \eqref{equ:LpthetaContinuityTranslation}
  \begin{align*}
              &\left\|v(\cdot-\psi)-v(\cdot)\right\|_{L^p_{\theta}} 
            =  \left\|\theta(\cdot)v(\cdot-\psi)-\theta(\cdot)v(\cdot)\right\|_{L^p} \\
    \leqslant &\left\|(\theta(\cdot)-\theta(\cdot-\psi))v(\cdot-\psi)\right\|_{L^p}+\left\|\theta(\cdot-\psi)v(\cdot-\psi)-\theta(\cdot)v(\cdot)\right\|_{L^p} \\
    \leqslant &\sup_{x\in\R^d}\left|\frac{\theta(x+\psi)-\theta(x)}{\theta(x)}\right|\left\|w\right\|_{L^p}+\left\|w(\cdot-\psi)-w(\cdot)\right\|_{L^p}
    \leqslant \varepsilon\quad\forall\,\psi\in\R^d,\,|\psi|\leqslant\delta.
  \end{align*}
  Using the transformation theorem (with transformations $\Phi(\xi)=y-\xi$ and $\Phi(y)=y-\psi$), the triangle inequality, H{\"o}lder's inequality (with $q\in]1,\infty]$ such 
  that $\frac{1}{p}+\frac{1}{q}=1$), Fubini's theorem, \eqref{equ:LpthetaContinuityTranslation} and Lemma \ref{lem:PropK}(1) we obtain
  \begin{align*}
             & \left\|v_3(\cdot,t)\right\|_{L^p_{\theta}} 
            = \left(\int_{\R^d}\left|\theta(y)\int_{\R^d}H(e^{-tS}y,\xi,t)\left(v(\xi)-v(y)\right)d\xi\right|^p dy\right)^{\frac{1}{p}} \\
            =& \left(\int_{\R^d}\left|\theta(y)\int_{\R^d}K(\psi,t)\left(v(y-\psi)-v(y)\right)d\psi\right|^p dy\right)^{\frac{1}{p}} \\
    \leqslant& \left(\int_{\R^d}\left(\theta(y)\int_{\R^d}\left|K(\psi,t)\right|\left|v(y-\psi)-v(y)\right|d\psi\right)^p dy\right)^{\frac{1}{p}} \\
    \leqslant& \left\|K(\cdot,t)\right\|_{L^1}^{\frac{1}{q}}\left(\int_{\R^d}\theta^p(y)\int_{\R^d}\left|K(\psi,t)\right| \left|v(y-\psi)-v(y)\right|^p d\psi dy\right)^{\frac{1}{p}} \\
            =& \left\|K(\cdot,t)\right\|_{L^1}^{\frac{1}{q}}\left(\int_{\R^d}\left|K(\psi,t)\right|\int_{\R^d}\theta^p(y)\left|v(y-\psi)-v(y)\right|^p dy d\psi\right)^{\frac{1}{p}} \\       
            =& \left\|K(\cdot,t)\right\|_{L^1}^{\frac{1}{q}}
               \left(\int_{\R^d}\left|K(\psi,t)\right|\left\|v(\cdot-\psi)-v(\cdot)\right\|_{L^p_{\theta}}^p d\psi\right)^{\frac{1}{p}} \\
            =& \left\|K(\cdot,t)\right\|_{L^1}^{\frac{1}{q}}
               \bigg(\int_{\left|\psi\right|\leqslant\delta_0}\left|K(\psi,t)\right|\left\|v(\cdot-\psi)-v(\cdot)\right\|_{L^p_{\theta}}^p d\psi \\
             & +\int_{\left|\psi\right|\geqslant\delta_0}\left|K(\psi,t)\right|\left\|v(\cdot-\psi)-v(\cdot)\right\|_{L^p_{\theta}}^p d\psi\bigg)^{\frac{1}{p}} \\
    \leqslant& \left\|K(\cdot,t)\right\|_{L^1}^{\frac{1}{q}}
               \bigg(\varepsilon_0^p \int_{\left|\psi\right|\leqslant\delta_0}\left|K(\psi,t)\right| d\psi \\
             & +2^{p-1} \left\|v\right\|_{L^p_{\theta}}^p \int_{\left|\psi\right|\geqslant\delta_0}\left(C_{\theta}^p e^{\eta p|\psi|}+1\right)\left|K(\psi,t)\right|d\psi\bigg)^{\frac{1}{p}} \\
    \leqslant& \left\|K(\cdot,t)\right\|_{L^1}^{\frac{p-1}{p}}
               \bigg(\varepsilon_0^p \int_{\R^d}\left|K(\psi,t)\right| d\psi
               +2^{p-1} \left\|v\right\|_{L^p_{\theta}}^p \int_{\left|\psi\right|\geqslant\delta_0}\left(C_{\theta}^p e^{\eta p|\psi|}+1\right)\left|K(\psi,t)\right| d\psi\bigg)^{\frac{1}{p}} \\
    \leqslant& C_1^{\frac{p-1}{p}}(t) \left(\varepsilon_0^p C_1(t)+2^{p-1} \left\|v\right\|_{L^p_{\theta}}^p \left(C_{\theta}^p \tilde{C}(t,\eta p,\delta_0)+\tilde{C}(t,0,\delta_0) \right)\right)^{\frac{1}{p}},
  \end{align*}
  where the constant $C_1(t)=C_1(t,0)$ is from Lemma \ref{lem:PropK} with $\eta p=0$. Hence, $\lim_{t\downarrow 0}\left\|v_3(\cdot,t)\right\|_{L^p_{\theta}}
  \leqslant\varepsilon_0 C_1(0)=\varepsilon_0 M^{\frac{d}{2}}$. Now, choose $\varepsilon_0>0$ arbitrary small.

  \noindent
  5. Finally, let us consider the $v_2$-term: Here we need the $L^p_{\theta}$-continuity (at $0$) w.r.t. rotations which states
  \begin{align}
    \label{equ:LpthetaContinuityRotation}
    \forall\,\varepsilon_0>0\;\exists\,t_0>0\;\forall\,0\leqslant t\leqslant t_0:\;\left\|v(e^{tS}\cdot)-v(\cdot)\right\|_{L^p_{\theta}}\leqslant\varepsilon_0.
  \end{align}
  First we verify \eqref{equ:LpthetaContinuityRotation} for the unweighted $L^p$-case with $\theta\equiv 1$ and $C_{\theta}=1$: Let $\varepsilon_0>0$. Since 
  $C_{\mathrm{c}}^{\infty}(\R^d,\C^N)$ is dense in $L^p(\R^d,\C^N)$ w.r.t. $\left\|\cdot\right\|_{L^p}$ for every $1\leqslant p<\infty$, see \cite[Satz 2.14(3)]{Alt2006}, 
  we can choose $\varphi_{\varepsilon_0}\in C_{\mathrm{c}}^{\infty}(\R^d,\C^N)$ such that $\left\|v-\varphi_{\varepsilon_0}\right\|_{L^p}\leqslant\frac{\varepsilon_0}{3}$. 
  Since $\varphi_{\varepsilon_0}\in C_{\mathrm{c}}^{\infty}(\R^d,\C^N)$, $\varphi_{\varepsilon_0}$ is uniformly continuous on $\supp(\varphi_{\varepsilon_0})$, i.e.
  \begin{align*}
    \forall\,\varepsilon_1>0\;\exists\,\delta_1=\delta_1(\varepsilon_1)>0\;&\forall\,x,x_0\in\supp(\varphi_{\varepsilon_0}) \\
    &\quad\text{ with }\left|x-x_0\right|\leqslant\delta_1:\;\left|\varphi_{\varepsilon_0}(x)-\varphi_{\varepsilon_0}(x_0)\right|\leqslant\varepsilon_1.
  \end{align*}
  Choosing $x_0:=e^{tS}x$ we have
  \begin{align*}
    \exists\,t_0=t_0(\delta_1)>0\;\forall\,0\leqslant t\leqslant t_0:\;\left|e^{tS}x-x\right|\leqslant\delta_1.
  \end{align*}
  Thus, choosing $\varepsilon_1:=\varepsilon_0\left(3\left|\supp(\varphi_{\varepsilon_0})\right|^{\frac{1}{p}}\right)^{-1}$ and combining this facts yields
  \begin{align*}
      \left\|\varphi_{\varepsilon_0}(e^{tS}\cdot)-\varphi_{\varepsilon_0}(\cdot)\right\|_{L^p}
    = \left(\int_{\supp(\varphi_{\varepsilon_0})}\left|\varphi_{\varepsilon_0}(e^{tS}x)-\varphi_{\varepsilon_0}(x)\right|^p\right)^{\frac{1}{p}}
    \leqslant\varepsilon_0\quad\forall\,0\leqslant t\leqslant t_0(\varepsilon_0).
  \end{align*}
  Now, we prove \eqref{equ:LpthetaContinuityRotation} for the general weighted $L^p$-case: Let $\varepsilon>0$ and choose $t_0$ as in \eqref{equ:LpthetaContinuityRotation} 
  (for $\theta\equiv 1$, $C_{\theta}=1$, $\varepsilon_0=\varepsilon$ and $w:=\theta v\in L^p(\R^d,\C^N)$ instead of $v$) then using \eqref{equ:WeightFunctionProp3} we obtain 
  the $L^p_{\theta}$-continuity from \eqref{equ:LpthetaContinuityRotation}
  \begin{align*}
               \left\|v_2(\cdot,t)\right\|_{L^p_{\theta}} 
            =& \left\|v(e^{tS}\cdot)-v(\cdot)\right\|_{L^p_{\theta}}
            =  \left\|\theta(\cdot)v(e^{tS}\cdot)-\theta(\cdot)v(\cdot)\right\|_{L^p} \\
    \leqslant& \left\|(\theta(\cdot)-\theta(e^{tS}\cdot))v(e^{tS}\cdot)\right\|_{L^p}+\left\|\theta(e^{tS}\cdot)v(e^{tS}\cdot)-\theta(\cdot)v(\cdot)\right\|_{L^p} \\
            =& \left\|w(e^{tS}\cdot)-w(\cdot)\right\|_{L^p} \leqslant\varepsilon\quad\forall\,0\leqslant t\leqslant t_0.
  \end{align*}
  Here, we used that \eqref{cond:A5} and \eqref{equ:WeightFunctionProp3} imply $\theta(e^{tS}x)=\theta(x)$ for every $x\in\R^d$ and $t\in\R$. Note that one can also use 
  \eqref{equ:WeightFunctionProp5} which is automatically satisfied by \eqref{equ:WeightFunctionProp3}. We end up with 
  $\lim_{t\downarrow 0}\left\|v_2(\cdot,t)\right\|_{L^p_{\theta}}\leqslant\varepsilon$. Now, choose $\varepsilon>0$ arbitrary small.
\end{proof}

\begin{remark}\label{rem:RemarkLpFunctionSpace}
  Condition \eqref{equ:WeightFunctionProp3} in this paper was assumed only to simplify the proofs of the heat kernel estimates in Section \ref{sec:SomePropertiesOfTheHeatKernel}. 
  The strong continuity in Theorem \ref{thm:OrnsteinUhlenbeckLpStrongContinuity} does not necessarily need the assumptions \eqref{equ:WeightFunctionProp3}, \eqref{equ:WeightFunctionProp4} 
  and \eqref{equ:WeightFunctionProp5}. If we omit these assumptions, then the semigroup $(T(t))_{t\geqslant 0}$ is still strongly continuous but (in general) only on a closed 
  subspace of $L^p_{\theta}(\R^d,\C^N)$, namely
  \begin{align*}
    X^p_{\theta}:=\left\{v\in L^p_{\theta}(\R^d,\C^N)\mid\text{ $v$ satisfies \eqref{equ:LpthetaContinuityTranslation} and \eqref{equ:LpthetaContinuityRotation}}\right\}, 1\leqslant p\leqslant\infty.
  \end{align*}
  Note that \eqref{equ:WeightFunctionProp4} was only assumed to deduce \eqref{equ:LpthetaContinuityTranslation} directly and therefore, to guarantee that $L^p_{\theta}=X^p_{\theta}$ 
  for any $1\leqslant p<\infty$. If \eqref{equ:WeightFunctionProp4} is not assumed it is possible to show that $T(t)$ maps $X^p_{\theta}$ into itself by extending the 
  proof of Theorem \ref{thm:OrnsteinUhlenbeckLpSemigroup}. In contrast to Theorem \ref{thm:OrnsteinUhlenbeckLpStrongContinuity} this even leads to strong continuity for $p=\infty$. 
  Note that in the unweighted case it holds $X^p=L^p(\R^d,\C^N)$ for any $1\leqslant p<\infty$ but not for $p=\infty$. Summarizing these facts, if we impose additionally 
  assumption \eqref{equ:WeightFunctionProp4} then $(T(t))_{t\geqslant 0}$ is strongly continuous on the whole $L^p_{\theta}(\R^d,\C^N)$ for any $1\leqslant p<\infty$, otherwise 
  $(T(t))_{t\geqslant 0}$ is only strongly continuous on the closed subspace $X^p_{\theta}$ but even for any $1\leqslant p\leqslant\infty$.
\end{remark}

Now, the \begriff{infinitesimal generator} $A_{p,\theta}:\D(A_{p,\theta})\subseteq L^p_{\theta}(\R^d,\C^N)\rightarrow L^p_{\theta}(\R^d,\C^N)$ of the Ornstein-Uhlenbeck 
semigroup $\left(T(t)\right)_{t\geqslant 0}$ in $L^p_{\theta}(\R^d,\C^N)$ for $1\leqslant p<\infty$, short $\left(A_{p,\theta},\D(A_{p,\theta})\right)$, is 
defined by, \cite[II.1.2 Definition]{EngelNagel2000},
\begin{align*}
  A_{p,\theta}v := \lim_{t\downarrow 0}\frac{T(t)v-v}{t},\; 1\leqslant p<\infty
\end{align*}
for every $v\in\D(A_{p,\theta})$, where the \begriff{domain} (or \begriff{maximal domain}) of $A_{p,\theta}$ is given by the subspace
\begin{align*}
  \D(A_{p,\theta}):=&\left\{v\in L^p_{\theta}(\R^d,\C^N)\mid \lim_{t\downarrow 0}\frac{T(t)v-v}{t}\text{ exists in $L^p_{\theta}(\R^d,\C^N)$}\right\} \\
          =&\left\{v\in L^p_{\theta}(\R^d,\C^N)\mid A_{p,\theta}v\in L^p_{\theta}(\R^d,\C^N)\right\}.
\end{align*}
Since $\left(A_{p,\theta},\D(A_{p,\theta})\right)$ is a closed operator on the Banach space $L^p_{\theta}(\R^d,\C^N)$ for every $1\leqslant p<\infty$, we have the following notion
\begin{align}
     \sigma(A_{p,\theta}) :=& \left\{\lambda\in\C\mid \lambda I-A_{p,\theta}\text{ is not bijective}\right\} &&\text{\begriff{spectrum of $A_{p,\theta}$},} \nonumber\\
       \rho(A_{p,\theta}) :=& \C\backslash\sigma(A_{p,\theta})                                               &&\text{\begriff{resolvent set of $A_{p,\theta}$},} 
       \label{equ:SpectrumResolventsetResolventLp}\\
  R(\lambda,A_{p,\theta}) :=& \left(\lambda I-A_{p,\theta}\right)^{-1}\text{, for }\lambda\in\rho(A_{p,\theta})       &&\text{\begriff{resolvent of $A_{p,\theta}$}.}\nonumber
\end{align}
In particular, $(\D(A_{p,\theta}),\left\|\cdot\right\|_{A_{p,\theta}})$ is a Banach space w.r.t. the \begriff{graph norm of $A_{p,\theta}$}
\begin{align*}
  \left\|v\right\|_{A_{p,\theta}}:=\left\|A_{p,\theta} v\right\|_{L^p_{\theta}(\R^d,\C^N)}+\left\|v\right\|_{L^p_{\theta}(\R^d,\C^N)},\quad v\in\D(A_{p,\theta}),
\end{align*}
see \cite[B.1 Definition]{EngelNagel2000}. 

In the next Corollary \ref{cor:OrnsteinUhlenbeckLpSolvabilityUniqueness} we show that the resolvent equation $(\lambda I-A_{p,\theta})v=g$ has a unique 
solution in $\D(A_{p,\theta})$. This follows immediately from a general result about semigroup theory, \cite[II.1.10 Theorem]{EngelNagel2000}. The application 
of this theorem requires an estimate of the form
\begin{align}
  \exists\,\omega_{p,\theta}\in\R\;\wedge\;\exists\,M_{p,\theta}\geqslant 1:\;\left\|T(t)\right\|_{\mathcal{L}(L^p_{\theta},L^p_{\theta})}\leqslant M_{p,\theta} e^{\omega_{p,\theta} t}\;
  \forall\,t\geqslant 0.
  \label{equ:OrnsteinUhlenbeckBoundednessOfTinLp}
\end{align}
We point out that such an estimate holds for every strongly continuous semigroup, \cite[I.5.5 Proposition]{EngelNagel2000}. For the unweighted $L^p$-case this follows directly 
from the estimate \eqref{equ:OrnsteinUhlenbeckLpBoundednessOfT} in Theorem \ref{thm:OrnsteinUhlenbeckLpBoundedness} (with $\theta\equiv 1$, $C_{\theta}=1$ and $\eta=\nu=0$) and 
leads to
\begin{align*}
  \left\|T(t)\right\|_{\L(L^p,L^p)}\leqslant M_p e^{\omega_{p} t}\;\forall\,t\geqslant 0\quad\text{with}\quad M_p:=\kappa\aone^{\frac{d}{2}},\quad\omega_p:=-\bzero,
\end{align*}
where $\aone:=\frac{\amax^2}{\amin\azero}\geqslant 1$, $\kappa:=\cond(Y)$ with $Y$ from \eqref{cond:A8B} and $\bzero:=s(-B)$. The weighted $L^p$-case is much more involved since 
we must estimate the terms of the Kummer confluent hypergeometric function ${}_1F_1$ that are contained in the constant $C_4(t)$. These terms must be estimated pointwise for every 
$t\geqslant 0$ by some exponential expression of the form $e^{ct}$: From Remark \ref{rem:1F1} we know that ${}_1F_1(a,b,0)=1$ for $a,b\in\C$ with $b\notin\{0,-1,-2,\ldots\}$ and
\begin{align*}
  t^c {}_1F_1(a,b,t) \sim \frac{\Gamma(b)}{\Gamma(a)}t^{a-b+c}e^{t}\text{, as }t\to\infty,\,t\geqslant 0
\end{align*}
for $a,b\in\C\backslash\{0,-1,-2,\ldots\}$, $c\in\C$. This implies for $a,b\in\C\backslash\{0,-1,-2,\ldots\}$, $c\in\C$
\begin{align*}
  \forall\,\varepsilon>0\;\exists\,\tilde{C}=\tilde{C}(a,b,c,\varepsilon)>0:\;\left|t^c\,{}_1F_1(a,b,t)\right|\leqslant\tilde{C}e^{(1+\varepsilon)t}\;\forall\,t\geqslant 0.
\end{align*}
Therefore, we obtain
\begin{align*}
  \forall\,\varepsilon>0\;\exists\,C_{*}=C_{*}(d,A,\eta,p,\varepsilon)>0:\;
  C_4(t) \leqslant C_{\theta}\kappa\aone^{\frac{d}{2}}C_{*}e^{\left(-\bzero+(1+\varepsilon)\frac{\nu}{p}\right)t} \;\forall\,t\geqslant 0,
\end{align*}
and we find that \eqref{equ:OrnsteinUhlenbeckBoundednessOfTinLp} is satisfied with constants
\begin{align}
  M_{p,\theta}=C_{\theta}\kappa\aone^{\frac{d}{2}}C_{*},\quad \omega_{p,\theta}=-\bzero+(1+\varepsilon)\frac{\nu}{p}=-\bzero+(1+\varepsilon)\frac{\amax^2 \eta^2 p}{\azero},\quad\varepsilon>0.
  \label{equ:MthetaOmegatheta}
\end{align}

\begin{corollary}[Solvability and uniqueness in $L^p_{\theta}(\R^d,\C^N)$]\label{cor:OrnsteinUhlenbeckLpSolvabilityUniqueness}
  Let the assumptions \eqref{cond:A8B}--\eqref{cond:A5} be satisfied and let $1\leqslant p<\infty$. Moreover, let $\theta\in C(\R^d,\R)$ be a radial weight function 
  of exponential growth rate $\eta\geqslant 0$ satisfying \eqref{equ:WeightFunctionProp4} and let $\lambda\in\C$ with $\Re\lambda>\omega_{p,\theta}$ for some $\varepsilon>0$ 
  (see \eqref{equ:MthetaOmegatheta}).
  Then for every $g\in L^p_{\theta}(\R^d,\C^N)$ the resolvent equation
  \begin{align*}
    \left(\lambda I-A_{p,\theta}\right)v = g
  \end{align*}
  admits a unique solution $v_{\star}\in\D(A_{p,\theta})$, which is given by the integral expression
  \begin{align}
    v_{\star} = R(\lambda,A_{p,\theta})g = \int_{0}^{\infty}e^{-\lambda t}T(t)g dt 
                            = \int_{0}^{\infty}e^{-\lambda t}\int_{\R^d}H(\cdot,\xi,t)g(\xi)d\xi dt.
    \label{equ:IntegralExpressionLp}
  \end{align}
  Moreover, the following resolvent estimate holds
  \begin{align*}
    \left\|v_{\star}\right\|_{L^p_{\theta}(\R^d,\C^N)}\leqslant \frac{M_{p,\theta}}{\Re\lambda-\omega_{p,\theta}}\left\|g\right\|_{L^p_{\theta}(\R^d,\C^N)}.
  \end{align*}
\end{corollary}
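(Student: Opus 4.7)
The plan is to reduce this statement to the classical Hille--Yosida-type characterization of the resolvent of a $C_0$-semigroup generator, namely \cite[II.1.10 Theorem]{EngelNagel2000}. That theorem says: if $(T(t))_{t\geqslant 0}$ is a strongly continuous semigroup on a Banach space $X$ with generator $(A,\D(A))$ satisfying $\left\|T(t)\right\|_{\L(X)}\leqslant Me^{\omega t}$ for all $t\geqslant 0$, then $\{\lambda\in\C\mid\Re\lambda>\omega\}\subseteq\rho(A)$, the resolvent is given by the Laplace transform
\begin{align*}
  R(\lambda,A)g=\int_{0}^{\infty}e^{-\lambda t}T(t)g\,dt,
\end{align*}
and $\left\|R(\lambda,A)\right\|_{\L(X)}\leqslant M(\Re\lambda-\omega)^{-1}$.

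First I would collect the three ingredients needed to invoke this theorem in our setting with $X=L^p_{\theta}(\R^d,\C^N)$ and $(A,\D(A))=(A_{p,\theta},\D(A_{p,\theta}))$: strong continuity is provided by Theorem \ref{thm:OrnsteinUhlenbeckLpStrongContinuity} (which uses \eqref{equ:WeightFunctionProp4}), the existence of the generator with closed graph follows by abstract semigroup theory, and the exponential bound \eqref{equ:OrnsteinUhlenbeckBoundednessOfTinLp} with the explicit constants $M_{p,\theta}$ and $\omega_{p,\theta}$ from \eqref{equ:MthetaOmegatheta} was established in the paragraph immediately preceding the corollary by estimating the Kummer-function terms inside $C_4(t)$ from Theorem \ref{thm:OrnsteinUhlenbeckLpBoundedness} via the asymptotic formula \eqref{equ:1F1Formula4}. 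Once these three pieces are in place, existence, uniqueness and the integral representation \eqref{equ:IntegralExpressionLp} all drop out of the cited abstract theorem.

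For the resolvent estimate, I would just integrate the exponential bound: for $\Re\lambda>\omega_{p,\theta}$ the integrand satisfies
\begin{align*}
  \left\|e^{-\lambda t}T(t)g\right\|_{L^p_{\theta}}
  \leqslant M_{p,\theta}e^{-(\Re\lambda-\omega_{p,\theta})t}\left\|g\right\|_{L^p_{\theta}},
\end{align*}
which is integrable on $[0,\infty[$ and yields absolute convergence of $v_{\star}=\int_{0}^{\infty}e^{-\lambda t}T(t)g\,dt$ as a Bochner integral in $L^p_{\theta}(\R^d,\C^N)$, together with the claimed bound $\left\|v_{\star}\right\|_{L^p_{\theta}}\leqslant M_{p,\theta}(\Re\lambda-\omega_{p,\theta})^{-1}\left\|g\right\|_{L^p_{\theta}}$. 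The iterated-integral form in \eqref{equ:IntegralExpressionLp} is then obtained by inserting the definition \eqref{equ:OrnsteinUhlenbeckSemigroup2} of $T(t)g$ through the heat kernel. There is no genuine obstacle here, since all of the analytic work (strong continuity, the heat-kernel estimates underlying \eqref{equ:OrnsteinUhlenbeckBoundednessOfTinLp}, and the control of the Kummer factors in $C_4(t)$) has already been carried out in the previous sections; the corollary is a packaging of those results through an off-the-shelf semigroup-theoretic statement.
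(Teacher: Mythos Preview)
Your proposal is correct and follows exactly the paper's approach: the paper does not give a separate proof for this corollary but states just before it that the result ``follows immediately from a general result about semigroup theory, \cite[II.1.10 Theorem]{EngelNagel2000}'', after having established the exponential bound \eqref{equ:OrnsteinUhlenbeckBoundednessOfTinLp} with the constants \eqref{equ:MthetaOmegatheta}. Your write-up simply makes this explicit, and there is nothing to add.
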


Corollary \ref{cor:OrnsteinUhlenbeckLpSolvabilityUniqueness} states that the complete right half--plane $\Re\lambda>\omega_{p,\theta}$ belongs to the resolvent set $\rho(A_{p,\theta})$. 
Therefore, the $L^p_{\theta}$-spectrum $\sigma(A_{p,\theta})$ is contained in the left half--plane $\Re\lambda\leqslant\omega_{p,\theta}$. The spectral bound $s(A_{p,\theta})$ of 
$A_{p,\theta}$, \cite[II.1.12 Definition]{EngelNagel2000}, defined by
\begin{align*}
  -\infty\leqslant s(A_{p,\theta}):=\sup_{\lambda\in\sigma(A_{p,\theta})}\Re\lambda\leqslant\omega_{p,\theta}< +\infty
\end{align*}
can be considered as the smallest value $\omega\in\R$ such that the spectrum is contained in the half--plane $\Re\lambda\leqslant\omega$. This value is an important characteristic 
for linear operators. We point out that a large growth rate $\eta\geqslant 0$ yields a large right-shift of the upper bound $\omega_{p,\theta}$, compare \eqref{equ:MthetaOmegatheta}. 
The following remark about the Green's function of $A_{p,\theta}$ concerns the case where $0\in\rho(A_{p,\theta})$. For more information about the $L^p$-spectrum of $\L_{\infty}$ 
we refer to the more general result from \cite[Theorem 7.9 and Theorem 9.4]{Otten2014}. A result about the essential spectrum for the special case $d=p=2$ can be found in 
\cite[Section 8.2]{BeynLorenz2008}. The essential $L^p$-spectrum of the drift term for a general scalar real-valued Ornstein-Uhlenbeck operator was treated in 
\cite[Theorem 2.6]{Metafune2001}.

\begin{remark} \label{rem:GreensFunctionLptheta}
  Let the assumptions \eqref{cond:A8B}--\eqref{cond:A5} and $\Re\sigma(B)>0$ be satisfied. Moreover, choose $\varepsilon>0$ and the growth rate $\eta\geqslant 0$ 
  small such that $\omega_{p,\theta}<0$, then we deduce from Corollary \ref{cor:OrnsteinUhlenbeckLpSolvabilityUniqueness} with $\lambda=0>\omega_{p,\theta}$ that
  \begin{align*}
    \forall\,g\in L^p_{\theta}(\R^d,\C^N)\;\exists\,v_{\star}\in\D(A_{p,\theta}):\;A_{p,\theta}v_{\star}=g\text{ in $L^p_{\theta}(\R^d,\C^N)$}
  \end{align*}
  and the solution is given by, compare \eqref{equ:IntegralExpressionLp},
  \begin{align*}
    v_{\star}(x)=-\int_{0}^{\infty}\int_{\R^d}H(x,\xi,t)g(\xi)d\xi dt.
  \end{align*}
  If Fubini's theorem applies here, we obtain
  \begin{align*}
    v_{\star}(x) = -\left[R(0,A_{p,\theta})g\right](x) = \int_{\R^d}G(x,\xi)g(\xi)d\xi,
  \end{align*}
  where
  \begin{align*}
    G(x,\xi) := -\int_{0}^{\infty}H(x,\xi,t) dt
  \end{align*}
  denotes the Green's function of $A_{p,\theta}$. We have not investigated in detail in which cases this argument can be rigorous.
\end{remark}

We next prove exponentially weighted $L^p$-resolvent estimates for the solution $v_{\star}$ of the resolvent equation $\left(\lambda I-A_{p,\theta}\right)v=g$. 
One consequence of these estimates is the inclusion $\D(A_{p,\theta})\subseteq W^{1,p}_{\theta}(\R^d,\C^N)$ for any $1\leqslant p<\infty$. The proof uses only 
the integral expression \eqref{equ:IntegralExpressionLp} and extends \cite[Theorem 6.8]{Otten2014}.

\begin{theorem}[Exponentially weighted resolvent estimates in $L^p_{\theta}(\R^d,\C^N)$]\label{thm:OrnsteinUhlenbeckAPrioriEstimatesInLpConstantCoefficients}
  Let the assumptions \eqref{cond:A8B}--\eqref{cond:A5} be satisfied, $1\leqslant p<\infty$ and let $\theta_1\in C(\R^d,\R)$ 
  be a radial weight function of exponential growth rate $\eta_1\geqslant 0$ satisfying \eqref{equ:WeightFunctionProp4}. Moreover, let $0<\vartheta<1$, 
  $\lambda\in\C$ with $\Re\lambda>\omega_{p,\theta_1}$ for some $\varepsilon>0$ (see \eqref{equ:MthetaOmegatheta}) and let $\theta_2\in C(\R^d,\R)$ be a further radial weight function 
  of exponential growth rate $\eta_2\geqslant 0$ with 
  \begin{align*}
    0\leqslant\eta_2^2\leqslant\vartheta\frac{\azero(\Re\lambda-\omega_{p,\theta_1})}{\amax^2 p^2},
  \end{align*} 
  which satisfies the relation
  \begin{align*}
    \exists\,C>0:\;\theta_1(x)\leqslant C\theta_2(x)\;\forall\,x\in\R^d.
  \end{align*}
  Let $g\in L^p_{\theta_1}(\R^d,\C^N)$ and $v_{\star}\in\D(A_{p,\theta_1})$ be the unique solution of $(\lambda I-A_{p,\theta_1})v = g$ in $L^p_{\theta_1}(\R^d,\C^N)$ 
  according to Corollary \ref{cor:OrnsteinUhlenbeckLpSolvabilityUniqueness}. If in addition $g\in L^p_{\theta_2}(\R^d,\C^N)$ then we have $v_{\star}\in W^{1,p}_{\theta_2}(\R^d,\C^N)$ 
  and the following estimates hold
  \begin{align}
        \left\|v_{\star}\right\|_{L^p_{\theta_2}(\R^d,\C^N)} \leqslant& \frac{C_7}{\Re\lambda-\omega_{p,\theta_1}}\left\|g\right\|_{L^p_{\theta_2}(\R^d,\C^N)},                                             \label{equ:OrnsteinUhlenbeckExpDecStatVstar}\\
    \left\|D_i v_{\star}\right\|_{L^p_{\theta_2}(\R^d,\C^N)} \leqslant& \frac{C_8}{\left(\Re\lambda-\omega_{p,\theta_1}\right)^{\frac{1}{2}}}\left\|g\right\|_{L^p_{\theta_2}(\R^d,\C^N)},\,i=1,\ldots,d,   \label{equ:OrnsteinUhlenbeckExpDecStatDiVstar}
  \end{align}
  where the $\lambda$-independent constants $C_7$, $C_8$ are given by Lemma \ref{lem:ImproperInt} for $1\leqslant p<\infty$ and $C_{\theta}=C_{\theta_2}$.
\end{theorem}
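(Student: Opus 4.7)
The plan is to work directly from the integral representation
\[
v_{\star} = \int_{0}^{\infty} e^{-\lambda t} T(t) g \, dt
\]
furnished by Corollary \ref{cor:OrnsteinUhlenbeckLpSolvabilityUniqueness}, which applies because $\theta_1 \leqslant C\theta_2$ yields the continuous embedding $L^p_{\theta_2}(\R^d,\C^N) \hookrightarrow L^p_{\theta_1}(\R^d,\C^N)$, so that $g \in L^p_{\theta_1}(\R^d,\C^N)$ is automatic. The overall strategy is to pull the $L^p_{\theta_2}$-norm inside the time integral via Minkowski's integral inequality, apply the semigroup bound from Theorem \ref{thm:OrnsteinUhlenbeckLpBoundedness} with respect to the \emph{stronger} weight $\theta_2$, and finally use Lemma \ref{lem:ImproperInt} to control the resulting time integral.

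For \eqref{equ:OrnsteinUhlenbeckExpDecStatVstar}, Minkowski's integral inequality gives
\[
\|v_{\star}\|_{L^p_{\theta_2}} \leqslant \int_{0}^{\infty} e^{-\Re\lambda\, t} \|T(t) g\|_{L^p_{\theta_2}} \, dt.
\]
Theorem \ref{thm:OrnsteinUhlenbeckLpBoundedness} applied with the radial weight $\theta_2$ of growth rate $\eta_2$ produces $\|T(t) g\|_{L^p_{\theta_2}} \leqslant C_4(t) \|g\|_{L^p_{\theta_2}}$ with $\nu = a_{\max}^2 \eta_2^2 p^2 / a_0$, and Lemma \ref{lem:ImproperInt}(1) applied with $\omega = \omega_{p,\theta_1}$ and $\eta = \eta_2$ -- the hypothesis on $\eta_2^2$ being precisely the assumption required by that lemma -- closes the estimate.

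For \eqref{equ:OrnsteinUhlenbeckExpDecStatDiVstar} the crucial point is to justify
\[
D_i v_{\star} = \int_{0}^{\infty} e^{-\lambda t} D_i T(t) g \, dt \quad\text{in } L^p_{\theta_2}(\R^d,\C^N).
\]
Here I would proceed via truncation: set $v_{\star}^{n} := \int_{1/n}^{n} e^{-\lambda t} T(t) g\, dt$, differentiate under the proper integral by exploiting the semigroup smoothing, and verify that $\{D_i v_{\star}^n\}$ is Cauchy in $L^p_{\theta_2}$ using Minkowski's inequality together with \eqref{equ:OrnsteinUhlenbeckLpBoundednessOfDiT} and dominated convergence -- the dominating integrand $e^{-\Re\lambda\, t} C_5(t) \|g\|_{L^p_{\theta_2}}$ is integrable on $(0,\infty)$ by Lemma \ref{lem:ImproperInt}(2), with the singularity $C_5(t) \sim t^{-1/2}$ at the origin being harmless. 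Closedness of the weak derivative then identifies the limit with $D_i v_{\star}$. Once this exchange is established, the same Minkowski--Theorem \ref{thm:OrnsteinUhlenbeckLpBoundedness}--Lemma \ref{lem:ImproperInt}(2) chain produces \eqref{equ:OrnsteinUhlenbeckExpDecStatDiVstar}, and $v_{\star} \in W^{1,p}_{\theta_2}(\R^d,\C^N)$ follows.

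The principal obstacle is this interchange of $D_i$ with $\int_{0}^{\infty}$ in the weighted $L^p$-setting, which relies delicately on the integrability at both ends of the time interval afforded by Lemma \ref{lem:ImproperInt}; all remaining steps are routine applications of the weighted semigroup bounds and the Kummer-function integral estimates already prepared in Section \ref{sec:SomePropertiesOfTheHeatKernel}.
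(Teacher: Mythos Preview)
Your proposal is correct and follows essentially the same route as the paper: the paper also starts from the integral representation \eqref{equ:IntegralExpressionLp}, applies Minkowski's inequality to pull the $L^p_{\theta_2}$-norm inside the time integral, reproduces inline the kernel estimate that you package as an invocation of Theorem \ref{thm:OrnsteinUhlenbeckLpBoundedness} with weight $\theta_2$, and then feeds the result into Lemma \ref{lem:ImproperInt} with $\omega=\omega_{p,\theta_1}$ and $\eta=\eta_2$. Your truncation argument for interchanging $D_i$ with $\int_0^\infty$ is in fact more careful than the paper, which simply writes $D^\beta v_\star$ as the integral of $D^\beta H$ without further comment.
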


\begin{proof}
  In the following we use the same notation as in Theorem \ref{thm:OrnsteinUhlenbeckLpBoundedness}. Let $v_{\star}\in\D(A_{p,\theta_1})$ be from Corollary 
  \ref{cor:OrnsteinUhlenbeckLpSolvabilityUniqueness} the unique solution of the resolvent equation. To show \eqref{equ:OrnsteinUhlenbeckExpDecStatVstar} and 
  \eqref{equ:OrnsteinUhlenbeckExpDecStatDiVstar} for $1\leqslant p<\infty$ we use the integral expression \eqref{equ:IntegralExpressionLp}, the transformation theorem (with transformation 
  $\Phi(\xi)=e^{tS}x-\xi$ in $\xi$ and $\Phi(x)=e^{tS}x-\psi$ in $x$), \eqref{equ:K} and \eqref{equ:Ki}, the triangle inequality, H{\"o}lder's inequality (with $q\in]1,\infty]$ 
  such that $\frac{1}{p}+\frac{1}{q}=1$), Fubini's theorem, \eqref{equ:WeightFunctionProp1}--\eqref{equ:WeightFunctionProp3} for $\theta_2$, Lemma \ref{lem:ImproperInt} 
  (with $\omega=\omega_{p,\theta_1}$, $C_{\theta}=C_{\theta_2}$, $\eta=\eta_2$) and obtain for every $\beta\in\N_0^d$ with $\left|\beta\right|\in\{0,1\}$
  \begin{align*}
             & \left\|D^{\beta}v_{\star}\right\|_{L^p_{\theta_2}}
            =  \left(\int_{\R^d}\theta_2^p(x)\left|D^{\beta}v_{\star}(x)\right|^p dx\right)^{\frac{1}{p}} \\
            =& \left(\int_{\R^d}\theta_2^p(x)\left|\int_{0}^{\infty}e^{-\lambda t}\int_{\R^d}\left[D^{\beta} H(x,\xi,t)\right]g(\xi)d\xi dt\right|^p dx\right)^{\frac{1}{p}} \\
            =& \left(\int_{\R^d}\theta_2^p(x)\left|\int_{0}^{\infty}e^{-\lambda t}\int_{\R^d}K^{\beta}(\psi,t)g(e^{tS}x-\psi)d\psi dt\right|^p dx\right)^{\frac{1}{p}} \\        
    \leqslant& \int_{0}^{\infty}e^{-\Re\lambda t}\left(\int_{\R^d}\theta_2^p(x)\left|\int_{\R^d}K^{\beta}(\psi,t)g(e^{tS}x-\psi)d\psi\right|^p dx\right)^{\frac{1}{p}} dt \\
    \leqslant& \int_{0}^{\infty}e^{-\Re\lambda t}\left(\int_{\R^d}\left(\int_{\R^d}\theta_2(x)\left|K^{\beta}(\psi,t)\right| \left|g(e^{tS}x-\psi)\right|d\psi\right)^p dx\right)^{\frac{1}{p}} dt \\
    \leqslant& \int_{0}^{\infty}e^{-\Re\lambda t}\left(\int_{\R^d} Z^{\frac{p}{q}}(t)
               \int_{\R^d}\left|K^{\beta}(\psi,t)\right| \left(\theta_2(x)\left|g(e^{tS}x-\psi)\right|\right)^p d\psi dx\right)^{\frac{1}{p}} dt \\
            =& \int_{0}^{\infty}e^{-\Re\lambda t}Z^{\frac{1}{q}}(t)
               \left(\int_{\R^d}\left|K^{\beta}(\psi,t)\right| \int_{\R^d}\left(\theta_2(x)\left|g(e^{tS}x-\psi)\right|\right)^p dx d\psi\right)^{\frac{1}{p}} dt \\
            =& \int_{0}^{\infty}e^{-\Re\lambda t}Z^{\frac{1}{q}}(t)
               \left(\int_{\R^d}\left|K^{\beta}(\psi,t)\right| \int_{\R^d}\left(\theta_2(e^{-tS}(y+\psi))\left|g(y)\right|\right)^p dy d\psi\right)^{\frac{1}{p}} dt \\
    \leqslant& \int_{0}^{\infty}e^{-\Re\lambda t}Z^{\frac{1}{q}}(t)
               \left(\int_{\R^d}C_{\theta_2}^p e^{\eta_2 p |\psi|}\left|K^{\beta}(\psi,t)\right| \int_{\R^d}\theta_2^p(y)\left|g(y)\right|^p dy d\psi\right)^{\frac{1}{p}} dt \\
            =& \int_{0}^{\infty}e^{-\Re\lambda t} C_{\theta_2}\left(\int_{\R^d}\left|K^{\beta}(\psi,t)\right| d\psi\right)^{\frac{p-1}{p}}
               \left(\int_{\R^d}e^{\eta_2 p |\psi|}\left|K^{\beta}(\psi,t)\right| d\psi\right)^{\frac{1}{p}} dt
               \left\|g\right\|_{L^p_{\theta_2}} \\
    \leqslant& \int_{0}^{\infty}e^{-\Re\lambda t}C_{4+|\beta|}(t)dt \left\|g\right\|_{L^p_{\theta_2}}
    \leqslant  \frac{C_{7+|\beta|}}{\left(\Re\lambda-\omega_{p,\theta_1}\right)^{1-\frac{\left|\beta\right|}{2}}} \left\|g\right\|_{L^p_{\theta_2}},
  \end{align*}
  where we used the abbreviation
  \begin{align*}
    Z(t):=\int_{\R^d}\left|K^{\beta}(\psi,t)\right| d\psi=\left\|K^{\beta}(\cdot,t)\right\|_{L^1}.
  \end{align*}
\end{proof}

\begin{remark}
  \enum{1} Let us briefly clarify the meaning of Theorem \ref{thm:OrnsteinUhlenbeckAPrioriEstimatesInLpConstantCoefficients} for a concrete choice of weight functions: 
  Consider the weight functions $\theta_1(x)=e^{-\varepsilon_1|x|}$, $\varepsilon_1>0$, and $\theta_2(x)=e^{\varepsilon_2|x|}$, $\varepsilon_2>0$, with growth rates 
  $\eta_1=\varepsilon_1$ and $\eta_2=\varepsilon_2$. Then $L^p_{\theta_1}(\R^d,\C^N)$ contains exponentially increasing functions and $L^p_{\theta_2}(\R^d,\C^N)$ exponentially 
  decreasing functions. If $g\in L^p_{\theta_1}(\R^d,\C^N)$ is exponentially increasing then Corollary \ref{cor:OrnsteinUhlenbeckLpSolvabilityUniqueness} yields a unique 
  exponentially increasing solution $v_{\star}\in\D(A_{p,\theta_1})$ of the resolvent equation $(\lambda I-A_{p,\theta_1})v_{\star}=g$. 
  Theorem \ref{thm:OrnsteinUhlenbeckAPrioriEstimatesInLpConstantCoefficients} then states that if the right hand side $g$ is even exponentially decreasing (in the $L^p$-sense) 
  then the unique exponentially increasing solution $v_{\star}$ and its derivatives up to order $1$ must also decay exponentially at the same rate $\eta_2$ as the inhomogeneity, 
  meaning that $v_{\star}\in W^{1,p}_{\theta_2}(\R^d,\C^N)$. 
  In the following we discuss two special cases of Theorem \ref{thm:OrnsteinUhlenbeckAPrioriEstimatesInLpConstantCoefficients} and their consequences.

  \noindent
  \enum{2} An application of Theorem \ref{thm:OrnsteinUhlenbeckAPrioriEstimatesInLpConstantCoefficients} with $\theta_1=\theta_2=:\theta$ shows that
  \begin{align*}
    \D(A_{p,\theta})\subseteq W^{1,p}_{\theta}(\R^d,\C^N),\text{ for every $1\leqslant p<\infty$}.
  \end{align*}
  This result is useful to solve the identification problem for $A_{p,\theta}$ and to characterize the maximal domain of $\L_{\infty}$ in $L^p_{\theta}(\R^d,\C^N)$. 
  For more details see the forthcoming papers \cite{Otten2015a} and \cite{Otten2015b}. For the unweighted case $\theta_1=\theta_2\equiv 1$ this yields
  \begin{align*}
    \D(A_{p})\subseteq W^{1,p}(\R^d,\C^N),\text{ for every $1\leqslant p<\infty$}.
  \end{align*}
  In general the procedure used in the proof of Theorem \ref{thm:OrnsteinUhlenbeckAPrioriEstimatesInLpConstantCoefficients} does not yield an estimate 
  of second-order derivatives, for instance of $\left\|D_jD_i v_{\star}\right\|_{L^p_{\theta}(\R^d,\C^N)}$. This is caused by the fact that the asymptotic behavior 
  at the origin $C_{4+|\beta|}(t)\sim t^{-\frac{\left|\beta\right|}{2}}$ as $t\to 0$, cf. Theorem \ref{thm:OrnsteinUhlenbeckLpBoundedness}, leads to the singularity 
  $t^{-1}$ at $t=0$ for $\left|\beta\right|=2$ in the integral from Lemma \ref{lem:ImproperInt}.

  \noindent
  \enum{3} The special case $\theta_1\equiv 1$ and $\theta_2=:\theta$ of Theorem \ref{thm:OrnsteinUhlenbeckAPrioriEstimatesInLpConstantCoefficients} is treated 
  in \cite[Theorem 6.8]{Otten2014}. The result states that unique $L^p$-solutions $v_{\star}$ with exponentially decreasing inhomogeneities $g$ 
  decay exponentially. This is crucial when investigating exponential decay of rotating patterns in reaction-diffusion equations.
\end{remark}

%
%
\sect{The Ornstein-Uhlenbeck semigroup in \texorpdfstring{$\Cbt(\R^d,\C^N)$}{C\_\{b,theta\}(Rd,CN)}}
\label{sec:TheOrnsteinUhlenbeckSemigroupInCrub}

We now extend the results from Section \ref{sec:TheOrnsteinUhlenbeckSemigroupInLp} and derive resolvent estimates in exponentially weighted spaces of bounded 
continuous functions. For this purpose recall the family of mappings $\left(T(t)\right)_{t\geqslant 0}$ from \eqref{equ:OrnsteinUhlenbeckSemigroup2} which is 
now defined on the (complex-valued) Banach space $(\Cb(\R^d,\C^N),\left\|\cdot\right\|_{\Cb})$. 

The next two theorems show that the family of mappings $\left(T(t)\right)_{t\geqslant 0}$ generates a semigroup on $\Cb(\R^d,\C^N)$ and on its closed 
subspaces $\Cub(\R^d,\C^N)$ and $\Crub(\R^d,\C^N)$. This is well-known from \cite[Lemma 3.2]{DaPratoLunardi1995} for the scalar real-valued case with $B=0$. In 
the following theorem we even prove boundedness of the operator $T(t)$ in exponentially weighted $\Cb$-spaces. This is crucial for the proof of exponentially 
weighted resolvent estimates below.

\begin{theorem}[Boundedness on $\Cbt(\R^d,\C^N)$]\label{thm:OrnsteinUhlenbeckCbBoundedness}
  Let the assumptions \eqref{cond:A8B}--\eqref{cond:A5} be satisfied. Then for every radial weight function $\theta\in C(\R^d,\R)$ of exponential growth rate 
  $\eta\geqslant 0$ and for every $v\in \Cbt(\R^d,\C^N)$ we have
  \begin{align}
    \left\|T(t)v\right\|_{\Cbt(\R^d,\C^N)}          &\leqslant C_4(t)\left\|v\right\|_{\Cbt(\R^d,\C^N)} &&,\,t\geqslant 0,          \label{equ:OrnsteinUhlenbeckCbBoundednessOfT}     \\
    \left\|D_i T(t)v\right\|_{\Cbt(\R^d,\C^N)}      &\leqslant C_5(t)\left\|v\right\|_{\Cbt(\R^d,\C^N)} &&,\,t>0,\,i=1,\ldots,d,    \label{equ:OrnsteinUhlenbeckCbBoundednessOfDiT}   \\
    \left\|D_j D_i T(t)v\right\|_{\Cbt(\R^d,\C^N)}  &\leqslant C_6(t)\left\|v\right\|_{\Cbt(\R^d,\C^N)} &&,\,t>0,\,i,j=1,\ldots,d,  \label{equ:OrnsteinUhlenbeckCbBoundednessOfDjDiT}
  \end{align}
  where the constants $C_{4+\left|\beta\right|}(t)=C_{4+\left|\beta\right|}(t;\bzero,1)$ for $\left|\beta\right|=0,1,2$ are from Theorem \ref{thm:OrnsteinUhlenbeckLpBoundedness} 
  with $p=1$. Note that $C_{4+\left|\beta\right|}(t)\sim t^{\frac{d-1}{2}} e^{-(\bzero-\nu)t}$ as $t\to\infty$ and $C_{4+\left|\beta\right|}(t)\sim t^{-\frac{\left|\beta\right|}{2}}$ 
  as $t\to 0$ for every $\left|\beta\right|=0,1,2$, where $\nu:=\frac{\amax^2 \eta^2}{\azero}\geqslant 0$ and $\amax,\azero,\bzero$ are defined in \eqref{equ:aminamaxazerobzero}.
\end{theorem}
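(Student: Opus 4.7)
The plan is to mirror the argument used for Theorem \ref{thm:OrnsteinUhlenbeckLpBoundedness} but in the $p=\infty$ style, i.e.\ replacing integrated estimates by pointwise ones. For a multi-index $\beta\in\N_0^d$ with $|\beta|\leqslant 2$, I would first differentiate under the integral in \eqref{equ:OrnsteinUhlenbeckSemigroup2} and then perform the substitution $\xi=e^{tS}x-\psi$ (justified by \eqref{cond:A5}) to obtain the representation
\begin{align*}
  D^{\beta}\left[T(t)v\right](x)=\int_{\R^d}K^{\beta}(\psi,t)\,v\bigl(e^{tS}x-\psi\bigr)\,d\psi,
\end{align*}
where $K^{\beta}$ is the radial kernel from \eqref{equ:K}, \eqref{equ:Ki}, \eqref{equ:Kji}. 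Taking the spectral norm under the integral sign immediately gives the pointwise bound
\begin{align*}
  \theta(x)\bigl|D^{\beta}\left[T(t)v\right](x)\bigr|\leqslant\int_{\R^d}\theta(x)\,\bigl|K^{\beta}(\psi,t)\bigr|\,\bigl|v(e^{tS}x-\psi)\bigr|\,d\psi.
\end{align*}

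The key step, as in the $L^p_\theta$-case, is to transfer the weight from $x$ onto the point $e^{tS}x-\psi$ where $v$ is evaluated. Using radiality \eqref{equ:WeightFunctionProp3} together with skew-symmetry \eqref{cond:A5}, we have $\theta(x)=\theta(e^{tS}x)$; then applying \eqref{equ:WeightFunctionProp2} with the decomposition $e^{tS}x=(e^{tS}x-\psi)+\psi$ yields
\begin{align*}
  \theta(x)=\theta(e^{tS}x)\leqslant C_{\theta}\,\theta(e^{tS}x-\psi)\,e^{\eta|\psi|}.
\end{align*}
Substituting this into the previous inequality and using the definition of the $\Cbt$-norm bounds $\theta(e^{tS}x-\psi)|v(e^{tS}x-\psi)|$ by $\|v\|_{\Cbt}$, giving
\begin{align*}
  \theta(x)\bigl|D^{\beta}\left[T(t)v\right](x)\bigr|\leqslant C_{\theta}\,\|v\|_{\Cbt}\int_{\R^d}e^{\eta|\psi|}\,\bigl|K^{\beta}(\psi,t)\bigr|\,d\psi.
\end{align*}

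Since this right-hand side is independent of $x$, I take the supremum over $x\in\R^d$ on the left, and then invoke Lemma \ref{lem:PropK}\enum{1}--\enum{3} with $p=1$ (so that the exponent $\eta p$ there becomes $\eta$) to control the integral by $C_{1+|\beta|}(t)$. Recalling the remark preceding the statement of the theorem, namely $C_{4+|\beta|}(t)=C_{\theta}\,C_{1+|\beta|}(t)$ when $p=1$, this yields exactly \eqref{equ:OrnsteinUhlenbeckCbBoundednessOfT}--\eqref{equ:OrnsteinUhlenbeckCbBoundednessOfDjDiT}. The asymptotic behavior as $t\to\infty$ and $t\to 0$ follows directly from the asymptotics already recorded in Lemma \ref{lem:PropK}. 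I do not anticipate a serious obstacle: the only slightly delicate point is the weight transfer, which works cleanly thanks to the combination \eqref{cond:A5}+\eqref{equ:WeightFunctionProp3}, so that $\theta$ is invariant under the rotation $e^{tS}$ and only a purely translational factor $e^{\eta|\psi|}$ needs to be absorbed into the kernel integral.
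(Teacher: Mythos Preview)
Your proposal is correct and matches the paper's own proof essentially step for step: both use the substitution $\xi=e^{tS}x-\psi$ to pass to the radial kernel $K^{\beta}$, transfer the weight via \eqref{cond:A5}+\eqref{equ:WeightFunctionProp3}+\eqref{equ:WeightFunctionProp2} to produce the factor $C_{\theta}e^{\eta|\psi|}$, and then invoke Lemma \ref{lem:PropK} with $p=1$ to obtain $C_{4+|\beta|}(t)=C_{\theta}C_{1+|\beta|}(t)$. The only cosmetic difference is that the paper first interchanges $\sup_x$ with the $\psi$-integral and then changes variable $y=e^{tS}x-\psi$ inside the supremum, whereas you apply the weight bound pointwise before taking the supremum; the two orderings are equivalent.
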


\begin{proof}
  We use the same notation as in the proof of Theorem \ref{thm:OrnsteinUhlenbeckLpBoundedness}. To show \eqref{equ:OrnsteinUhlenbeckCbBoundednessOfT}, 
  \eqref{equ:OrnsteinUhlenbeckCbBoundednessOfDiT} and \eqref{equ:OrnsteinUhlenbeckCbBoundednessOfDjDiT} for $v\in\Cbt(\R^d,\C^N)$, we use 
  \eqref{equ:OrnsteinUhlenbeckSemigroup2}, the transformation theorem (with transformations $\Phi(\xi)=e^{tS}x-\xi$ in $\xi$, $\Phi(x)=e^{tS}x-\psi$ in $x$), 
  \eqref{equ:K}, \eqref{equ:Ki}, \eqref{equ:Kji}, the triangle inequality, \eqref{equ:WeightFunctionProp1}--\eqref{equ:WeightFunctionProp3} and Lemma \ref{lem:PropK} (1),(2),(3) 
  with $p=1$ to obtain
  \begin{align*}
             & \left\|D^{\beta}T(t)v\right\|_{\Cbt}
            =  \sup_{x\in\R^d}\theta(x)\left|D^{\beta}\left[T(t)v\right](x)\right| \\
            =& \sup_{x\in\R^d}\theta(x)\left|\int_{\R^d}\left[D^{\beta}H(x,\xi,t)\right]v(\xi)d\xi\right| \\
            =& \sup_{x\in\R^d}\theta(x)\left|\int_{\R^d}K^{\beta}(\psi,t)v(e^{tS}x-\psi)d\psi\right| \\
    \leqslant& \sup_{x\in\R^d}\theta(x)\int_{\R^d}\left|K^{\beta}(\psi,t)\right| \left|v(e^{tS}x-\psi)\right|d\psi \\
    \leqslant& \int_{\R^d}\left|K^{\beta}(\psi,t)\right| \sup_{x\in\R^d}\theta(x) \left|v(e^{tS}x-\psi)\right|d\psi \\
            =& \int_{\R^d}\left|K^{\beta}(\psi,t)\right| \sup_{y\in\R^d}\theta(e^{-tS}(y+\psi))\left|v(y)\right|d\psi \\ 
    \leqslant& C_{\theta} \left\|e^{\eta|\cdot|}K^{\beta}(\cdot,t)\right\|_{L^1} \left\|v\right\|_{\Cbt}
    \leqslant  C_{4+|\beta|}(t)\left\|v\right\|_{\Cbt}
  \end{align*}
  for $t\geqslant 0$, if $|\beta|=0$, and for $t>0$, if $|\beta|=1$ or $|\beta|=2$.
\end{proof}

\begin{theorem}[Semigroup on $\Cb(\R^d,\C^N)$, $\Cub(\R^d,\C^N)$ and $\Crub(\R^d,\C^N)$]\label{thm:OrnsteinUhlenbeckCrubSemigroup}
  Let the assumptions \eqref{cond:A8B}--\eqref{cond:A5} be satisfied. Moreover, let $\theta\in C(\R^d,\R)$ be a radial weight function of exponential 
  growth rate $\eta\geqslant 0$. Then the operators $\left(T(t)\right)_{t\geqslant 0}$ given by \eqref{equ:OrnsteinUhlenbeckSemigroup2} generate a semigroup on $\Cb(\R^d,\C^N)$, 
  $\Cub(\R^d,\C^N)$ and $\Crub(\R^d,\C^N)$.
\end{theorem}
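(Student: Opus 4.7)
The plan is to verify the three claims in turn, reducing everything to the integral representation
\[
[T(t)v](x) = \int_{\R^d} K(\psi,t)\, v(e^{tS}x-\psi)\, d\psi,\quad t>0,
\]
(obtained from \eqref{equ:OrnsteinUhlenbeckSemigroup2} by the substitution $\psi=e^{tS}x-\xi$), together with the kernel estimates of Lemma \ref{lem:PropK} and Lemma \ref{lem:ChapmanKolmogorovFormula}.

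\textbf{Step 1 (Boundedness on $\Cb$).} Specialising Theorem \ref{thm:OrnsteinUhlenbeckCbBoundedness} to the trivial weight $\theta\equiv 1$ (with $C_\theta=1$, $\eta=0$) immediately yields $T(t)\colon \Cb(\R^d,\C^N)\to\Cb(\R^d,\C^N)$ linear and bounded for every $t\geqslant 0$. Moreover, the semigroup properties $T(0)=I$ and $T(t)T(s)=T(t+s)$ are proved exactly as in Theorem \ref{thm:OrnsteinUhlenbeckLpSemigroup}: Fubini's theorem plus the Chapman--Kolmogorov identity of Lemma \ref{lem:ChapmanKolmogorovFormula}; no $L^p$--specific argument is used there.

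\textbf{Step 2 (Invariance of $\Cub$).} For $v\in\Cub(\R^d,\C^N)$ with modulus of continuity $\omega_v$, we estimate
\[
\bigl| [T(t)v](x)-[T(t)v](y) \bigr| \leqslant \int_{\R^d}|K(\psi,t)|\,\bigl|v(e^{tS}x-\psi)-v(e^{tS}y-\psi)\bigr|\,d\psi.
\]
Since $S$ is skew-symmetric, $e^{tS}$ is orthogonal, so $|e^{tS}x-e^{tS}y|=|x-y|$, and the integrand is bounded by $\omega_v(|x-y|)\,|K(\psi,t)|$. Applying Lemma \ref{lem:PropK}\enum{1} with $p=1$, $\eta=0$ gives $\|K(\cdot,t)\|_{L^1}\leqslant C_1(t)<\infty$, hence
\[
\bigl| [T(t)v](x)-[T(t)v](y) \bigr| \leqslant C_1(t)\,\omega_v(|x-y|) \xrightarrow{|x-y|\to 0}0
\]
uniformly in $x,y$, so $T(t)v\in\Cub(\R^d,\C^N)$.

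\textbf{Step 3 (Invariance of $\Crub$).} Let $v\in\Crub(\R^d,\C^N)$. By Step 2 we already have $T(t)v\in\Cub(\R^d,\C^N)$, so it only remains to verify the rotation continuity
\[
\lim_{s\to 0}\sup_{x\in\R^d}\bigl|[T(t)v](e^{-sS}x)-[T(t)v](x)\bigr|=0.
\]
Here the key observation is that $K(\cdot,t)$ is radial (see \eqref{equ:K}, since $|\psi|^2$ enters), and orthogonal transformations preserve Lebesgue measure. Substituting $\psi\mapsto e^{-sS}\psi$ in the first integral and using $K(e^{-sS}\psi,t)=K(\psi,t)$ together with the commutativity $e^{tS}e^{-sS}=e^{-sS}e^{tS}$, we obtain
\[
[T(t)v](e^{-sS}x)-[T(t)v](x) = \int_{\R^d} K(\psi,t)\,\bigl[v\bigl(e^{-sS}(e^{tS}x-\psi)\bigr)-v(e^{tS}x-\psi)\bigr]\,d\psi.
\]
Taking the sup over $x$, the bracket is dominated by $\sup_{y\in\R^d}|v(e^{-sS}y)-v(y)|$, which tends to $0$ as $s\to 0$ by the definition of $\Crub$. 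One final application of $\|K(\cdot,t)\|_{L^1}\leqslant C_1(t)$ completes the estimate. The semigroup identities on $\Cub$ and $\Crub$ are inherited from those on $\Cb$ established in Step 1, since these spaces are closed subspaces invariant under $T(t)$.

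\textbf{Main obstacle.} The delicate point is Step 3: the naive change of variable $\psi\mapsto\psi-e^{tS}x$ would disentangle $v$ from $e^{-sS}$ in the wrong place. The trick of exploiting the radial symmetry of $K$ together with orthogonality of $e^{-sS}$ is what forces the rotation variable inside $v$ in the form $v(e^{-sS}y)-v(y)$, which is precisely what $\Crub$ is designed to control.
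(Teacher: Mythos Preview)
Your proof is correct and follows essentially the same route as the paper: boundedness on $\Cb$ via Theorem \ref{thm:OrnsteinUhlenbeckCbBoundedness} with $\theta\equiv 1$ and Chapman--Kolmogorov for the semigroup law, then invariance of $\Cub$ via uniform continuity and the $L^1$-bound on $K$, and finally invariance of $\Crub$ by exploiting the radial symmetry $K(e^{-sS}\psi,t)=K(\psi,t)$ to push the rotation inside $v$. The only cosmetic differences are that the paper writes the $\Cub$ step in $\varepsilon$--$\delta$ form rather than with a modulus of continuity, and uses $e^{\tau S}$ rather than $e^{-sS}$ in Step 3 (equivalent since the limit is two-sided).
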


\begin{proof}
  1. Similarly as in the proof of Theorem \ref{thm:OrnsteinUhlenbeckLpSemigroup} but with $\theta\equiv 1$ and \eqref{equ:OrnsteinUhlenbeckCbBoundednessOfT} instead of 
  \eqref{equ:OrnsteinUhlenbeckLpBoundednessOfT} we verify that $\left(T(t)\right)_{t\geqslant 0}$ is a semigroup on $\Cb(\R^d,\C^N)$. Obviously, the 
  semigroup properties for $\left(T(t)\right)_{t\geqslant 0}$ are also satisfied on the subspaces $\Cub(\R^d,\C^N)$ and $\Crub(\R^d,\C^N)$. Therefore, we 
  must only prove that for any $t\geqslant 0$ the mapping $T(t)$ maps $\Cub(\R^d,\C^N)$ into itself and $\Crub(\R^d,\C^N)$ into itself.

  \noindent
  2. First we show that $T(t)$ maps $\Cub(\R^d,\C^N)$ into $\Cub(\R^d,\C^N)$ for any $t\geqslant 0$: Let $v\in\Cub(\R^d,\C^N)$, i.e. $v$ satisfies
  \begin{align*}
    \forall\,\tilde{\varepsilon}>0\;\exists\,\tilde{\delta}>0\;\forall\,x,y\in\R^d\text{ with }|x-y|\leqslant\tilde{\delta}:\left|v(x)-v(y)\right|\leqslant\tilde{\varepsilon}.
  \end{align*}
  Let $\varepsilon>0$ be arbitrary, choose $\tilde{\varepsilon}:=\frac{\varepsilon}{C_1(t)}$, where $C_1(t)=C_1(t;\bzero,1)>0$ is from Lemma \ref{lem:PropK} for $\eta=0$, 
  and choose $\delta:=\tilde{\delta}>0$. Using the transformation theorem (with transformations $\Phi(\xi)=e^{tS}x-\xi$ and $\Phi(\xi)=e^{tS}y-\xi$), \eqref{equ:K} 
  and Lemma \ref{lem:PropK} (1), we obtain for every $x,y\in\R^d$ with $\left|x-y\right|\leqslant\delta$ 
  \begin{align*}
             & \left|\left[T(t)v\right](x)-\left[T(t)v\right](y)\right| \\
            =& \left|\int_{\R^d}H(x,\xi,t)v(\xi)d\xi - \int_{\R^d}H(y,\xi,t)v(\xi)d\xi\right| \\
            =& \left|\int_{\R^d}H(x,e^{tS}x-\psi,t)v(e^{tS}x-\psi)d\psi - \int_{\R^d}H(y,e^{tS}y-\psi,t)v(e^{tS}y-\psi)d\psi\right| \\
    \leqslant& \int_{\R^d}\left|K(\psi,t)\right| \left|v(e^{tS}x-\psi)- v(e^{tS}y-\psi)\right| d\psi \\
    \leqslant& \tilde{\varepsilon}\int_{\R^d}\left|K(\psi,t)\right| d\psi
    \leqslant  \tilde{\varepsilon}C_1(t) = \varepsilon.
  \end{align*}
  Therefore, $\left(T(t)\right)_{t\geqslant 0}$ is a semigroup on $\Cub(\R^d,\C^N)$.

  \noindent
  3. Finally, we show that $T(t)$ maps $\Crub(\R^d,\C^N)$ into $\Crub(\R^d,\C^N)$ for any $t\geqslant 0$: Let $v\in C_{rub}(\R^d,\C^N)$, i.e. 
  $\left\|v(e^{\tau S}\cdot)-v(\cdot)\right\|_{C_b(\R^d,\C^N)}\to 0$ as $\tau\to 0$. 
  Using the transformation theorem (with transformations $\Phi(\xi)=e^{tS}x-e^{-\tau S}\xi$ and $\Phi(\xi)=e^{tS}x-\xi$), \eqref{equ:K}, $K(e^{\tau S}\psi,t)=K(\psi,t)$ 
  and Lemma \ref{lem:PropK} (1) (with $p=1$ and $\eta=0$) we obtain
  \begin{align*}
             & \left|\left[T(t)v\right](e^{\tau S}x)-\left[T(t)v\right](x)\right| \\
            =& \left|\int_{\R^d}H(e^{\tau S}x,\xi,t)v(\xi)d\xi-\int_{\R^d}H(x,\xi,t)v(\xi)d\xi\right| \\
            =& \left|\int_{\R^d}H(e^{\tau S}x,e^{\tau S}(e^{tS}x-\psi),t)v(e^{\tau S}(e^{tS}x-\psi))d\psi\right. \\
             & \left.\quad\quad\quad-\int_{\R^d}H(x,e^{tS}x-\psi,t)v(e^{tS}x-\psi)d\psi\right| \\
    \leqslant& \int_{\R^d}\left|K(\psi,t)\right| \left|v(e^{\tau S}(e^{tS}x-\psi))-v(e^{tS}x-\psi)\right|d\psi \\
    \leqslant& \int_{\R^d}\left|K(\psi,t)\right| \sup_{x\in\R^d}\left|v(e^{\tau S}(e^{tS}x-\psi))-v(e^{tS}x-\psi)\right|d\psi \\
            =& \int_{\R^d}\left|K(\psi,t)\right| \sup_{y\in\R^d}\left|v(e^{\tau S}y)-v(y)\right|d\psi \\
            =& \left\|v(e^{\tau S}\cdot)-v(\cdot)\right\|_{C_b} \int_{\R^d}\left|K(\psi,t)\right| d\psi \\
    \leqslant& C_1(t) \left\|v(e^{\tau S}\cdot)-v(\cdot)\right\|_{C_b},\,x\in\R^d.
  \end{align*}
  Now, we take the suprema over all $x\in\R^d$ and the limit as $\tau\to 0$ to end up with
  \begin{align*}
    \lim_{\tau\to 0}\left\|\left[T(t)v\right](e^{\tau S}\cdot)-\left[T(t)v\right](\cdot)\right\|_{C_b}\leqslant C_1(t) \lim_{\tau\to 0}\left\|v(e^{\tau S}\cdot)-v(\cdot)\right\|_{C_b}=0.
  \end{align*}
\end{proof}

The next result shows that the Ornstein-Uhlenbeck semigroup $\left(T(t)\right)_{t\geqslant 0}$ is neither strongly continuous on $\Cb(\R^d,\C^N)$ nor on $\Cub(\R^d,\C^N)$. 
More precisely, $\left(T(t)\right)_{t\geqslant 0}$ generates only a weakly continuous semigroup on $\Cub(\R^d,\C^N)$. This observation is due to \cite[Section 6]{Cerrai1994} 
and \cite{DaPratoLunardi1995} for the scalar real-valued Ornstein-Uhlenbeck operator. It was shown in \cite[Lemma 3.2]{DaPratoLunardi1995} that $\Crub(\R^d,\R)$ is the 
largest subspace of $\Cub(\R^d,\R)$ such that the scalar real-valued Ornstein-Uhlenbeck operator is strongly continuous. The next theorem is an extension of 
\cite[Lemma 3.2]{DaPratoLunardi1995} to complex systems and states that the Ornstein-Uhlenbeck semigroup $\left(T(t)\right)_{t\geqslant 0}$ is strongly continuous on 
$\Crub(\R^d,\C^N)$. The proof follows the line of thought from Theorem \ref{thm:OrnsteinUhlenbeckLpStrongContinuity}.

\begin{theorem}[Strong continuity on $\Crub(\R^d,\C^N)$]\label{thm:OrnsteinUhlenbeckCrubStrongContinuity}
  Let the assumptions \eqref{cond:A8B}, \eqref{cond:A2} and \eqref{cond:A5} be satisfied. Then $\left(T(t)\right)_{t\geqslant 0}$ is a $C_0$-semigroup 
  (or strongly continuous semigroup) on $\Crub(\R^d,\C^N)$, i.e.
  \begin{align}
    \lim_{t\downarrow 0}\left\|T(t)v-v\right\|_{\Cb(\R^d,\C^N)}=0\;\forall\,v\in\Crub(\R^d,\C^N). \label{equ:CrubStrongCont}
  \end{align}
\end{theorem}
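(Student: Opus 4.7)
The plan is to mirror the strategy of Theorem \ref{thm:OrnsteinUhlenbeckLpStrongContinuity} with $\theta\equiv 1$, replacing the $L^p_{\theta}$-continuity of translations and rotations by uniform continuity of $v\in\Cub(\R^d,\C^N)$ together with the defining property of $\Crub(\R^d,\C^N)$. First I would introduce the diffusion semigroup $G(t,0)$ from \eqref{equ:DiffusionSemigroup} so that $[T(t)v](x)=[G(t,0)v](e^{tS}x)$, and split
\begin{align*}
\left\|T(t)v-v\right\|_{\Cb} \leqslant \sup_{y\in\R^d}\left|\left[G(t,0)v\right](y)-v(y)\right| + \sup_{x\in\R^d}\left|v(e^{tS}x)-v(x)\right|.
\end{align*}
The second summand tends to $0$ as $t\downarrow 0$ directly from the defining property of $\Crub(\R^d,\C^N)$: by skew-symmetry of $S$ from \eqref{cond:A5}, substituting $y=e^{-tS}x$ shows $\sup_x|v(e^{tS}x)-v(x)|=\sup_y|v(e^{-tS}y)-v(y)|$, so there is nothing to do here.

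For the first summand I would use Lemma \ref{lem:PropK2}\enum{1}, which yields $\int_{\R^d}H(y,\xi,t)d\xi=e^{-Bt}$, to decompose
\begin{align*}
\left[G(t,0)v\right](y)-v(y) = \int_{\R^d}H(e^{-tS}y,\xi,t)\left(v(\xi)-v(y)\right)d\xi + \left(e^{-Bt}-I_N\right)v(y).
\end{align*}
The last term is globally dominated by $|e^{-Bt}-I_N|\,\left\|v\right\|_{\Cb}$, which vanishes as $t\downarrow 0$. For the integral I would perform the change of variables $\xi=y-\psi$, turning it into $\int_{\R^d}K(\psi,t)\bigl(v(y-\psi)-v(y)\bigr)d\psi$ via \eqref{equ:K}, and split the integration into the regions $\{|\psi|\leqslant\delta_0\}$ and $\{|\psi|\geqslant\delta_0\}$ for a parameter $\delta_0>0$ to be chosen.

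On the inner ball, uniform continuity of $v\in\Cub(\R^d,\C^N)$ supplies for every $\varepsilon_0>0$ a $\delta_0>0$ with $|v(y-\psi)-v(y)|\leqslant\varepsilon_0$, so this piece is uniformly bounded in $y$ by $\varepsilon_0\left\|K(\cdot,t)\right\|_{L^1}\leqslant\varepsilon_0 C_1(t)$, which stays bounded as $t\downarrow 0$ since $C_1(t)\to\kappa\aone^{d/2}$. On the exterior I would use the crude bound $|v(y-\psi)-v(y)|\leqslant 2\left\|v\right\|_{\Cb}$ together with the tail integral $\int_{|\psi|\geqslant\delta_0}|K(\psi,t)|d\psi=\tilde{C}(t,0,\delta_0)$ introduced in step~4 of the proof of Theorem \ref{thm:OrnsteinUhlenbeckLpStrongContinuity}, which tends to $0$ as $t\downarrow 0$ for every fixed $\delta_0>0$. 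Taking $\limsup_{t\downarrow 0}$ then gives a bound proportional to $\varepsilon_0 C_1(0)$, and letting $\varepsilon_0\downarrow 0$ closes the estimate. The main obstacle is conceptual rather than technical: controlling $\sup_x|v(e^{tS}x)-v(x)|$ uniformly as $|x|\to\infty$ is precisely the content of the $\Crub$-property, and this is the only step that requires more than membership in $\Cub(\R^d,\C^N)$, explaining why the conclusion cannot be strengthened to the full space $\Cub(\R^d,\C^N)$.
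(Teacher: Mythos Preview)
Your proposal is correct and follows essentially the same approach as the paper: the same decomposition via the diffusion semigroup $G(t,0)$, the same splitting into the rotation term (handled directly by the $\Crub$-property) and the diffusion term, and the same further decomposition of the latter using Lemma \ref{lem:PropK2}\enum{1} together with the inner/outer ball split on $\{|\psi|\leqslant\delta_0\}$ and $\{|\psi|\geqslant\delta_0\}$. The only minor addition is your explicit observation that skew-symmetry of $S$ reconciles $e^{tS}$ with the $e^{-tS}$ appearing in the definition of $\Crub$; the paper leaves this implicit.
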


\begin{proof}
  The proof is similar to the unweighted version of Theorem \ref{thm:OrnsteinUhlenbeckLpStrongContinuity}.

  \noindent
  1.-3. The first three steps can be adopted from the proof of Theorem \ref{thm:OrnsteinUhlenbeckLpStrongContinuity} with $\theta\equiv 1$ and 
  $\left\|\cdot\right\|_{\Cb}$ instead of $\left\|\cdot\right\|_{L^p_{\theta}}$.
  
  \noindent 
  4. Let us consider the $v_3$-term: The only difference to the $L^p_{\theta}$-case is that we need $\Cb$-continuity (at $0$) w.r.t. translations, i.e.
  \begin{align}
    \label{equ:CbContinuityTranslation}
    \forall\,\varepsilon_0>0\;\exists\,\delta_0>0\;\forall\,\psi\in\R^d\text{ with }|\psi|\leqslant\delta_0:\;
    \left\|v(\cdot-\psi)-v(\cdot)\right\|_{\Cb}\leqslant\varepsilon_0,
  \end{align}
  instead of the $L^p_{\theta}$-continuity from \eqref{equ:LpthetaContinuityTranslation}. Since $v\in\Cub(\R^d,\C^N)$ we have
  \begin{align*}
    \forall\,\varepsilon_0>0\;\exists\,\delta_0>0\;\forall\,x,y\in\R^d\text{ with }|x-y|\leqslant\delta_0:\;
    \left|v(x)-v(y)\right|\leqslant\varepsilon_0.
  \end{align*}
  Choosing $x:=y-\psi$ and taking the supremum over $y\in\R^d$ we obtain exactly \eqref{equ:CbContinuityTranslation}.

  \noindent
  5. Finally, let us consider the $v_2$-term. Here
  \begin{align*}
    \lim_{t\downarrow 0}\left\|v_2(\cdot,t)\right\|_{\Cb}=\lim_{t\downarrow 0}\left\|v(e^{tS}\cdot)-v(\cdot)\right\|_{\Cb}=0
  \end{align*}
  follows directly from the definition of $\Crub(\R^d,\C^N)$, since $v\in\Crub(\R^d,\C^N)$.
\end{proof}

\begin{remark}
  \enum{1} As shown in Theorem \ref{thm:OrnsteinUhlenbeckCrubStrongContinuity} the semigroup $(T(t))_{t\geqslant 0}$ is neither strongly continuous 
           on $\Cb(\R^d,\C^N)$ nor on $\Cub(\R^d,\C^N)$. To guarantee strong continuity we must choose an appropriate subspace, that is $\Crub(\R^d,\C^N)$. 
           But in the special case $S=0$ the semigroup is strongly continuous in $\Cub(\R^d,\C^N)=\Crub(\R^d,\C^N)$ since $\Cub$-functions guarantee continuity 
           with respect to translations. For general matrices $S\in\R^{d,d}$ we must choose a suitable subspace of $\Cub(\R^d,\C^N)$ that implies continuity with 
           respect to rotations. This is $\Crub(\R^d,\C^N)$ introduced in \cite{DaPratoLunardi1995}.

  \noindent
  \enum{2} We point out that we analyzed the semigroup only in unweighted $\Cb$-spaces. This is in contrast to the $L^p$-theory from Section \ref{sec:TheOrnsteinUhlenbeckSemigroupInLp}. 
           One main problem that arises when investigating the semigroup in exponentially weighted $\Cb$-spaces is that we do not know how to define 
           appropriate subspaces $\Cubt(\R^d,\C^N)$ and $\Crubt(\R^d,\C^N)$ of $\Cbt(\R^d,\C^N)$ which guarantee strong continuity for all $S\in\R^{d,d}$. 
           This remains as an open problem.  Analogously to Remark \ref{rem:RemarkLpFunctionSpace} one may introduce $\Cbt$-continuity w.r.t. translations
           \begin{align}
             \label{equ:CbthetaContinuityTranslation}
             \forall\,\varepsilon_0>0\;\exists\,\delta_0>0\;\forall\,\psi\in\R^d\text{ with }|\psi|\leqslant\delta_0:\;
             \left\|v(\cdot-\psi)-v(\cdot)\right\|_{\Cbt}\leqslant\varepsilon_0
           \end{align}
           and $\Cbt$-continuity w.r.t. rotations
           \begin{align}
             \label{equ:CbthetaContinuityRotation}
             \forall\,\varepsilon_0>0\;\exists\,t_0>0\;\forall\,0\leqslant t\leqslant t_0:\;\left\|v(e^{tS}\cdot)-v(\cdot)\right\|_{\Cbt}\leqslant\varepsilon_0.
           \end{align}
           With this define the closed subspace
           \begin{align*}
             X_{\mathrm{b},\theta}:=\left\{v\in\Cbt(\R^d,\C^N)\mid\text{ $v$ satisfies \eqref{equ:CbthetaContinuityTranslation} and \eqref{equ:CbthetaContinuityRotation}}\right\}.
           \end{align*}
           Then, we expect $(T(t))_{t\geqslant 0}$ to be strongly continuous on $X_{\mathrm{b},\theta}$.
\end{remark}

For the infinitesimal generator of $(T(t))_{t\geqslant 0}$ in $\Crub(\R^d,\C^N)$, denoted by $\left(A_{\mathrm{b}},\D(A_{\mathrm{b}})\right)$, we define 
the spectrum $\sigma(A_{\mathrm{b}})$, the resolvent set $\rho(A_{\mathrm{b}})$ and the resolvent $R(\lambda,A_{\mathrm{b}})$ of $A_{\mathrm{b}}$ as above 
in \eqref{equ:SpectrumResolventsetResolventLp}.

In the following Corollary \ref{cor:OrnsteinUhlenbeckCrubSolvabilityUniqueness} we prove that the resolvent equation $(\lambda I-A_{\mathrm{b}})v=g$ has a unique 
solution in $\D(A_{\mathrm{b}})$. This can be deduced as in the $L^p$-case and requires an estimate of the form
\begin{align}
  \exists\,\omega_{\mathrm{b}}\in\R\;\wedge\;\exists\,M_{\mathrm{b}}\geqslant 1:\;\left\|T(t)\right\|_{\mathcal{L}(\Cb,\Cb)}\leqslant M_{\mathrm{b}} e^{\omega_{\mathrm{b}} t}\;\forall\,t\geqslant 0.
  \label{equ:OrnsteinUhlenbeckBoundednessOfTinCb}
\end{align}
Since we only consider the unweighted $\Cb$-case, this follows directly from the estimate \eqref{equ:OrnsteinUhlenbeckCbBoundednessOfT} in Theorem \ref{thm:OrnsteinUhlenbeckCbBoundedness} 
(with $\theta\equiv 1$, $C_{\theta}=1$ and $\eta=\nu=0$) and leads to
\begin{align}
  \left\|T(t)\right\|_{\L(\Cb,\Cb)}\leqslant M_{\mathrm{b}} e^{\omega_{\mathrm{b}} t}\;\forall\,t\geqslant 0\quad\text{with}\quad M_{\mathrm{b}}:=\kappa\aone^{\frac{d}{2}},\quad\omega_{\mathrm{b}}:=-\bzero,
  \label{equ:OrnsteinUhlenbeckBoundednessOfTinCb2}
\end{align}
where $\aone:=\frac{\amax^2}{\amin\azero}\geqslant 1$, $\kappa:=\cond(Y)$ with $Y$ from \eqref{cond:A8B} and $\bzero:=s(-B)$. From \cite[II.1.10 Theorem]{EngelNagel2000} we obtain

\begin{corollary}[Solvability and uniqueness in $\Crub(\R^d,\C^N)$]\label{cor:OrnsteinUhlenbeckCrubSolvabilityUniqueness}
  Let the assumptions \eqref{cond:A8B}--\eqref{cond:A5} be satisfied. Moreover, let $\lambda\in\C$ with $\Re\lambda>\omega_{\mathrm{b}}$ 
  (see \eqref{equ:OrnsteinUhlenbeckBoundednessOfTinCb2}). Then for every $g\in\Crub(\R^d,\C^N)$ the resolvent equation
  \begin{align*}
    \left(\lambda I-A_{\mathrm{b}}\right)v = g
  \end{align*}
  admits a unique solution $v_{\star}\in\D(A_{\mathrm{b},\theta})$, which is given by the integral expression
  \begin{align}
    v_{\star} = R(\lambda)g = \int_{0}^{\infty}e^{-\lambda t}T(t)g dt
                            = \int_{0}^{\infty}e^{-\lambda t}\int_{\R^d}H(\cdot,\xi,t)g(\xi)d\xi dt.
    \label{equ:IntegralExpressionCb}
  \end{align}
  Moreover, it holds the resolvent estimate
  \begin{align*}
    \left\|v_{\star}\right\|_{\Cb(\R^d,\C^N)}\leqslant \frac{M_{\mathrm{b}}}{\Re\lambda-\omega_{\mathrm{b}}}\left\|g\right\|_{\Cb(\R^d,\C^N)}.
  \end{align*}
\end{corollary}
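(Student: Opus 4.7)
The corollary is an essentially formal consequence of the semigroup results already in hand, so my plan is to invoke the abstract Hille-Yosida machinery from \cite[II.1.10 Theorem]{EngelNagel2000} rather than to redo any hard analysis. Concretely: Theorem \ref{thm:OrnsteinUhlenbeckCrubSemigroup} shows that $(T(t))_{t\geqslant 0}$ is a semigroup on the Banach space $\Crub(\R^d,\C^N)$; Theorem \ref{thm:OrnsteinUhlenbeckCrubStrongContinuity} shows that it is strongly continuous there; and the specialization of Theorem \ref{thm:OrnsteinUhlenbeckCbBoundedness} to $\theta\equiv 1$, $C_{\theta}=1$, $\eta=0$ yields the growth bound $\|T(t)\|_{\L(\Cb,\Cb)}\leqslant M_{\mathrm{b}}e^{\omega_{\mathrm{b}}t}$ recorded in \eqref{equ:OrnsteinUhlenbeckBoundednessOfTinCb2}, which restricts to the same bound on $\Crub(\R^d,\C^N)$. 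These are precisely the hypotheses needed to apply the abstract Laplace-transform formula for the resolvent.

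In detail, I would proceed as follows. First, fix $\lambda\in\C$ with $\Re\lambda>\omega_{\mathrm{b}}$ and $g\in\Crub(\R^d,\C^N)$. Strong continuity of $T(\cdot)g$ combined with the exponential bound $\|e^{-\lambda t}T(t)g\|_{\Cb}\leqslant M_{\mathrm{b}}e^{-(\Re\lambda-\omega_{\mathrm{b}})t}\|g\|_{\Cb}$ makes the function $t\mapsto e^{-\lambda t}T(t)g$ Bochner-integrable on $[0,\infty)$ with values in $\Crub(\R^d,\C^N)$, so the improper integral
\begin{align*}
  v_{\star}:=\int_{0}^{\infty}e^{-\lambda t}T(t)g\,dt
\end{align*}
defines an element of $\Crub(\R^d,\C^N)$. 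Appealing to \cite[II.1.10 Theorem]{EngelNagel2000} applied to the $C_0$-semigroup $(T(t))_{t\geqslant 0}$ on $\Crub(\R^d,\C^N)$ gives that $\lambda\in\rho(A_{\mathrm{b}})$, that $v_{\star}\in\D(A_{\mathrm{b}})$ solves $(\lambda I-A_{\mathrm{b}})v_{\star}=g$, that $v_{\star}=R(\lambda,A_{\mathrm{b}})g$ is the unique such solution, and that the resolvent estimate
\begin{align*}
  \|v_{\star}\|_{\Cb}\leqslant\int_{0}^{\infty}e^{-\Re\lambda t}\|T(t)g\|_{\Cb}\,dt\leqslant\frac{M_{\mathrm{b}}}{\Re\lambda-\omega_{\mathrm{b}}}\|g\|_{\Cb}
\end{align*}
holds.

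It then only remains to replace the abstract Bochner integral by the explicit integral \eqref{equ:IntegralExpressionCb}. For this I would evaluate both sides pointwise at $x\in\R^d$: by definition of the Bochner integral the evaluation functional $\mathrm{ev}_x:\Crub\to\C^N$ commutes with the integral, so $v_{\star}(x)=\int_{0}^{\infty}e^{-\lambda t}[T(t)g](x)\,dt$, and then \eqref{equ:OrnsteinUhlenbeckSemigroup2} gives $v_{\star}(x)=\int_{0}^{\infty}e^{-\lambda t}\int_{\R^d}H(x,\xi,t)g(\xi)\,d\xi\,dt$, matching \eqref{equ:IntegralExpressionCb}. The only mild subtlety in the whole argument is justifying the Bochner integrability in $\Crub(\R^d,\C^N)$, which follows once we know that $t\mapsto T(t)g$ is continuous into $\Crub(\R^d,\C^N)$ (a consequence of strong continuity plus the semigroup law $T(t+h)g-T(t)g=T(t)(T(h)g-g)$ together with the uniform bound), and that the scalar majorant $t\mapsto M_{\mathrm{b}}e^{-(\Re\lambda-\omega_{\mathrm{b}})t}\|g\|_{\Cb}$ is integrable on $[0,\infty)$; no further genuinely new work is needed beyond what is already established in Sections 3-5.
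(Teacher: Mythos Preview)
Your proposal is correct and follows exactly the paper's approach: the paper simply writes ``From \cite[II.1.10 Theorem]{EngelNagel2000} we obtain'' immediately before stating the corollary, having already recorded strong continuity (Theorem \ref{thm:OrnsteinUhlenbeckCrubStrongContinuity}) and the growth bound \eqref{equ:OrnsteinUhlenbeckBoundednessOfTinCb2}. Your write-up merely spells out in more detail the Bochner integrability and the pointwise identification with \eqref{equ:IntegralExpressionCb}, which the paper leaves implicit.
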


Corollary \ref{cor:OrnsteinUhlenbeckCrubSolvabilityUniqueness} states that the complete right half-plane $\Re\lambda>\omega_{\mathrm{b}}$ belongs to the resolvent set 
$\rho(A_{\mathrm{b}})$. Therefore, the $\Cb$-spectrum $\sigma(A_{\mathrm{b}})$ is contained in the left half-plane $\Re\lambda\leqslant\omega_{\mathrm{b}}$. For general 
results concerning the $\Cub$-spectrum we refer to \cite[Section 6]{Metafune2001}. 

\begin{theorem}[Exponentially weighted resolvent estimates in $\Cbt(\R^d,\C^N)$]\label{thm:OrnsteinUhlenbeckAPrioriEstimatesInCrub}
  Let the assumptions \eqref{cond:A8B}--\eqref{cond:A5} be satisfied. Moreover, let $0<\vartheta<1$, $\lambda\in\C$ with $\Re\lambda>\omega_{\mathrm{b}}$ 
  (see \eqref{equ:OrnsteinUhlenbeckBoundednessOfTinCb2}) and let $\theta\in C(\R^d,\R)$ be a radial weight function of exponential growth rate $\eta\geqslant 0$ with 
  \begin{align*}
    0\leqslant\eta^2\leqslant\vartheta\frac{\azero(\Re\lambda-\omega_{\mathrm{b}})}{\amax^2},
  \end{align*}
  which satisfies the relation
  \begin{align*}
    \theta(x)\geqslant C>0\;\forall\,x\in\R^d.
  \end{align*}
  Let $g\in\Crub(\R^d,\C^N)$ and $v_{\star}\in\D(A_{\mathrm{b}})$ be the unique solution of $(\lambda I-A_{\mathrm{b}})v = g$ in $\Crub(\R^d,\C^N)$ according to 
  Corollary \ref{cor:OrnsteinUhlenbeckCrubSolvabilityUniqueness}, then $v_{\star}\in\Crub(\R^d,\C^N)\cap\Cub^1(\R^d,\C^N)$.
  If in addition $g\in\Cbt(\R^d,\C^N)$ then we have 
  \begin{align*}
    v_{\star}\in\Crub(\R^d,\C^N)\cap\Cub^1(\R^d,\C^N)\cap\Cbt^1(\R^d,\C^N)
  \end{align*}
  and the following estimates hold
  \begin{align}
        \left\|v_{\star}\right\|_{C_{\mathrm{b,\theta}}(\R^d,\C^N)} \leqslant& \frac{C_7}{\Re\lambda-\omega_{\mathrm{b}}}\left\|g\right\|_{C_{\mathrm{b,\theta}}(\R^d,\C^N)},                                             \label{equ:OrnsteinUhlenbeckExpDecStatVstarInCrub}\\
    \left\|D_i v_{\star}\right\|_{C_{\mathrm{b,\theta}}(\R^d,\C^N)} \leqslant& \frac{C_8}{\left(\Re\lambda-\omega_{\mathrm{b}}\right)^{\frac{1}{2}}}\left\|g\right\|_{C_{\mathrm{b,\theta}}(\R^d,\C^N)},\,i=1,\ldots,d,   \label{equ:OrnsteinUhlenbeckExpDecStatDiVstarInCrub}
  \end{align}
  where the $\lambda$-independent constants $C_7,C_8$ are given by Lemma \ref{lem:ImproperInt} with $p=1$.
  Moreover, the following pointwise estimates are satisfied for every $x\in\R^d$ 
  \begin{align*}
    \left|v_{\star}(x)\right|     \leqslant& \frac{C_7}{\Re\lambda-\omega_{\mathrm{b}}}\frac{1}{\theta(x)} \left\|g\right\|_{C_{\mathrm{b,\theta}}(\R^d,\C^N)},\\
    \left|D_i v_{\star}(x)\right| \leqslant& \frac{C_8}{\left(\Re\lambda-\omega_{\mathrm{b}}\right)^{\frac{1}{2}}}\frac{1}{\theta(x)} \left\|g\right\|_{C_{\mathrm{b,\theta}}(\R^d,\C^N)},\,i=1,\ldots,d.
  \end{align*}
\end{theorem}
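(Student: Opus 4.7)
The plan is to derive everything from the integral representation \eqref{equ:IntegralExpressionCb} provided by Corollary \ref{cor:OrnsteinUhlenbeckCrubSolvabilityUniqueness}, in analogy with the $L^p_\theta$-proof of Theorem \ref{thm:OrnsteinUhlenbeckAPrioriEstimatesInLpConstantCoefficients}, but adapted to sup-norms. The membership $v_\star\in\Crub(\R^d,\C^N)$ is immediate from $v_\star\in\D(A_{\mathrm{b}})$ and the definition of the generator on $\Crub(\R^d,\C^N)$.

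For the pointwise bound on $v_\star$, I would start with
\[
v_\star(x)=\int_{0}^{\infty}e^{-\lambda t}\int_{\R^d}K(\psi,t)\,g(e^{tS}x-\psi)\,d\psi\,dt
\]
after the change of variables $\xi=e^{tS}x-\psi$ (and using \eqref{equ:K}). Multiplying by $\theta(x)$ and exploiting the crucial relation coming from \eqref{equ:WeightFunctionProp2} together with radiality \eqref{equ:WeightFunctionProp3} and skew-symmetry \eqref{cond:A5}, namely
\[
\theta(x)=\theta(e^{tS}x)\leqslant C_\theta\,\theta(e^{tS}x-\psi)\,e^{\eta|\psi|},
\]
one bounds $\theta(x)|g(e^{tS}x-\psi)|\leqslant C_\theta e^{\eta|\psi|}\|g\|_{\Cbt}$. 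Taking the supremum over $x$ before integrating (legal because the upper bound is independent of $x$) and applying Lemma \ref{lem:PropK}(1) with $p=1$ yields
\[
\theta(x)|v_\star(x)|\leqslant \|g\|_{\Cbt}\int_{0}^{\infty}e^{-\Re\lambda t}\,C_4(t;\bzero,1)\,dt,
\]
and Lemma \ref{lem:ImproperInt}(1) with $p=1$ and $\omega=\omega_{\mathrm{b}}$ (whose hypothesis on $\eta^2$ is exactly the assumption of the theorem) gives the constant $C_7/(\Re\lambda-\omega_{\mathrm{b}})$. This simultaneously proves the pointwise estimate, the $\Cbt$-estimate \eqref{equ:OrnsteinUhlenbeckExpDecStatVstarInCrub}, and the inclusion $v_\star\in\Cbt(\R^d,\C^N)$.

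For the first-order derivatives, I would first justify differentiation under the integral sign: by Theorem \ref{thm:OrnsteinUhlenbeckCbBoundedness} we have $\|D_iT(t)g\|_{\Cb}\leqslant C_5(t;\bzero,1)\|g\|_{\Cb}$, and $C_5(t)\sim t^{-1/2}$ as $t\downarrow 0$ and decays exponentially as $t\to\infty$, so the integrand is uniformly integrable in $t$. Then repeat the argument above with the kernel $K^i$ from \eqref{equ:Ki} in place of $K$, invoking Lemma \ref{lem:PropK}(2) and Lemma \ref{lem:ImproperInt}(2). The estimate $\|D_i v_\star\|_{\Cbt}\leqslant C_8/(\Re\lambda-\omega_{\mathrm{b}})^{1/2}\|g\|_{\Cbt}$ and its pointwise counterpart follow, proving $v_\star\in\Cbt^1(\R^d,\C^N)$; the bare (unweighted) bounds with $\theta\equiv 1$, $\eta=0$ give $v_\star\in\Cub^1(\R^d,\C^N)$.

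The main obstacle I anticipate is establishing that $D_iv_\star$ actually lies in $\Cub(\R^d,\C^N)$, i.e.\ is \emph{uniformly continuous}, rather than merely bounded. For each fixed $t>0$, $D_iT(t)g\in\Cub(\R^d,\C^N)$, since the kernel $K^i(\cdot,t)$ is smooth and decays rapidly, so its convolution structure transfers uniform continuity (in fact produces it); this can be shown by an argument paralleling step 2 in the proof of Theorem \ref{thm:OrnsteinUhlenbeckCrubSemigroup}. The passage to $D_iv_\star=\int_0^\infty e^{-\lambda t}D_iT(t)g\,dt$ then requires showing that the Bochner-type integral converges in the Banach space $\Cub(\R^d,\C^N)$; this follows because the integrand is norm-continuous in $t$ on $(0,\infty)$ with an integrable majorant $e^{-\Re\lambda t}C_5(t)\|g\|_{\Cb}$, and $\Cub(\R^d,\C^N)$ is closed in $\Cb(\R^d,\C^N)$. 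The lower bound $\theta\geqslant C>0$ ensures $\Cbt\hookrightarrow\Cb$, which is what couples the weighted statement to the unweighted regularity inclusion $v_\star\in\Cub^1(\R^d,\C^N)$.
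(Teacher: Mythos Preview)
Your derivation of the weighted resolvent estimates \eqref{equ:OrnsteinUhlenbeckExpDecStatVstarInCrub}, \eqref{equ:OrnsteinUhlenbeckExpDecStatDiVstarInCrub} and the pointwise bounds is exactly what the paper does: substitute $\xi=e^{tS}x-\psi$, use $\theta(x)=\theta(e^{tS}x)\leqslant C_\theta\theta(e^{tS}x-\psi)e^{\eta|\psi|}$, pull out $\|g\|_{\Cbt}$, apply Lemma~\ref{lem:PropK} with $p=1$, then Lemma~\ref{lem:ImproperInt} with $p=1$ and $\omega=\omega_{\mathrm{b}}$.

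The one genuine difference is how you establish $D_i v_\star\in\Cub(\R^d,\C^N)$. You go the abstract route: show $D_iT(t)g\in\Cub$ for each $t>0$, then argue that the Bochner integral $\int_0^\infty e^{-\lambda t}D_iT(t)g\,dt$ converges in the closed subspace $\Cub\subset\Cb$. This is correct, but it obliges you to check norm-continuity of $t\mapsto D_iT(t)g$ on $(0,\infty)$ (or at least strong measurability), which you have not spelled out and which is not entirely free. The paper bypasses this by a direct $\varepsilon$--$\delta$ computation: for $|x-y|\leqslant\delta$ one has $|e^{tS}x-e^{tS}y|=|x-y|\leqslant\delta$, so uniform continuity of $g$ gives $|g(e^{tS}x-\psi)-g(e^{tS}y-\psi)|\leqslant\tilde\varepsilon$ uniformly in $t,\psi$, and then
\[
|D_iv_\star(x)-D_iv_\star(y)|\leqslant\tilde\varepsilon\int_0^\infty e^{-\Re\lambda t}\int_{\R^d}|K^i(\psi,t)|\,d\psi\,dt\leqslant\tilde\varepsilon\,\frac{C_8}{(\Re\lambda-\omega_{\mathrm{b}})^{1/2}}.
\]
This is shorter, uses nothing beyond the kernel bounds already in hand, and sidesteps any Bochner-integrability verification. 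Your approach has the merit of being conceptually reusable (closedness of $\Cub$ plus an integrable majorant), but here the direct argument is the cleaner choice.
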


\begin{remark}
  Moreover, if $\theta$ additionally satisfies \eqref{equ:WeightFunctionProp7} then for every $x\in\R^d$ the following pointwise estimates are satisfied
  \begin{align*}
    \left|v_{\star}(x)\right|     \leqslant& \tilde{C}_{\theta}\frac{C_7}{\Re\lambda-\omega_{\mathrm{b}}}e^{-\nu|x|}\left\|g\right\|_{C_{\mathrm{b,\theta}}(\R^d,\C^N)},\\
    \left|D_i v_{\star}(x)\right| \leqslant& \tilde{C}_{\theta}\frac{C_8}{\left(\Re\lambda-\omega_{\mathrm{b}}\right)^{\frac{1}{2}}}e^{-\nu|x|}\left\|g\right\|_{C_{\mathrm{b,\theta}}(\R^d,\C^N)},\,i=1,\ldots,d.
  \end{align*}
\end{remark}

\begin{proof}
  In the first step we show that $v_{\star}\in\Crub(\R^d,\C^N)\cap\Cub^1(\R^d,\C^N)$ and in the second step we prove the resolvent estimates.
  For the proof we use the notation from Theorem \ref{thm:OrnsteinUhlenbeckCbBoundedness}.
 
  \noindent
  1. Let $v_{\star}\in\D(A_{\mathrm{b}})$ be from Corollary \ref{cor:OrnsteinUhlenbeckCrubSolvabilityUniqueness} the unique solution of the resolvent equation for 
  some $g\in\Crub(\R^d,\C^N)$, then by definition of the maximal domain $\D(A_{\mathrm{b}})$ we have $v_{\star}\in\Crub(\R^d,\C^N)$. It remains to show $v_{\star}\in\Cub^1(\R^d,\C^N)$: 
  Since $g\in\Cub(\R^d,\C^N)$ we have
  \begin{align*}
    \forall\,\tilde{\varepsilon}>0\;\exists\,\tilde{\delta}>0\;\forall\,x,y\in\R^d\text{ with }|x-y|\leqslant\tilde{\delta}:\;\left|g(x)-g(y)\right|\leqslant\tilde{\varepsilon}.
  \end{align*}
  Let $\varepsilon>0$ be arbitrary and choose $\tilde{\varepsilon}:=\frac{\varepsilon(\Re\lambda-\omega_{\mathrm{b}})^{1-\frac{|\beta|}{2}}}{C_{7+|\beta|}}$, where 
  $C_{7+|\beta|}$ is from Lemma \ref{lem:ImproperInt} (with $\omega=\omega_{\mathrm{b}}$, $p=1$, $C_{\theta}=1$, $\eta=0$), and choose $\delta:=\tilde{\delta}>0$. 
  Then we obtain for every $|\beta|\in\{0,1\}$ and $x,y\in\R^d$ with $\left|x-y\right|\leqslant\delta$ using \eqref{equ:IntegralExpressionCb}, the transformation theorem 
  (with transformations $\Phi(\xi)=e^{tS}x-\xi$ and $\Phi(\xi)=e^{tS}y-\xi$), \eqref{equ:K}, \eqref{equ:Ki}, Lemma \ref{lem:PropK}, and Lemma \ref{lem:ImproperInt} (observe that 
  $C_{1+|\beta|}(t;\bzero,1)=C_{4+|\beta|}(t;\bzero,1)$ for $p=1$ and $\eta=0$, since $C_{\theta}=1$)
  \begin{align*}
             & \left|D_i v_{\star}(x) - D_i v_{\star}(y)\right| \\
            =& \left|\int_{0}^{\infty}e^{-\lambda t}\int_{\R^d}D_i H(x,\xi,t)g(\xi)d\xi dt-\int_{0}^{\infty}e^{-\lambda t}\int_{\R^d}D_i H(y,\xi,t)g(\xi)d\xi dt\right| \\
            =& \left|\int_{0}^{\infty}e^{-\lambda t}\int_{\R^d} K^{\beta}(\psi,t)\left(g(e^{tS}x-\psi)-g(e^{tS}y-\psi)\right)d\psi dt\right| \\
    \leqslant& \int_{0}^{\infty}e^{-\Re\lambda t}\int_{\R^d} \left|K^{\beta}(\psi,t)\right| \left|g(e^{tS}x-\psi)-g(e^{tS}y-\psi)\right|d\psi dt \\
    \leqslant& \tilde{\varepsilon} \int_{0}^{\infty}e^{-\Re\lambda t}\int_{\R^d}\left|K^{\beta}(\psi,t)\right| d\psi dt
    \leqslant  \tilde{\varepsilon} \int_{0}^{\infty}e^{-\Re\lambda t} C_{1+|\beta|}(t)dt \\
            =& \tilde{\varepsilon} \int_{0}^{\infty}e^{-\Re\lambda t} C_{4+|\beta|}(t)dt
    \leqslant  \tilde{\varepsilon}\frac{C_{7+|\beta|}}{(\Re\lambda+\bzero)^{1-\frac{|\beta|}{2}}}=\varepsilon.
  \end{align*}

  \noindent
  2. Now, let in addition $g\in\Cbt(\R^d,\C^N)$. To show \eqref{equ:OrnsteinUhlenbeckExpDecStatVstarInCrub} and \eqref{equ:OrnsteinUhlenbeckExpDecStatDiVstarInCrub}, 
  we use the integral expression \eqref{equ:IntegralExpressionCb}, the transformation theorem (with transformation $\Phi(\xi)=e^{tS}x-\xi$ in $\xi$ and $\Phi(x)=e^{tS}x-\psi$ in $x$), 
  \eqref{equ:K} and \eqref{equ:Ki}, \eqref{equ:WeightFunctionProp1}--\eqref{equ:WeightFunctionProp3}, Lemma \ref{lem:ImproperInt} (with $\omega=\omega_{\mathrm{b}}$ and $p=1$) 
  and obtain for every $\beta\in\N_0^d$ with $\left|\beta\right|\in\{0,1\}$
  \begin{align*}
             & \left\|D^{\beta}v_{\star}\right\|_{C_{\mathrm{b},\theta}}
            =  \sup_{x\in\R^d}\theta(x)\left|D^{\beta}v_{\star}(x)\right| \\
            =& \sup_{x\in\R^d}\theta(x)\left|\int_{0}^{\infty}e^{-\lambda t}\int_{\R^d}\left[D^{\beta} H(x,\xi,t)\right]g(\xi)d\xi dt\right| \\
            =& \sup_{x\in\R^d}\theta(x)\left|\int_{0}^{\infty}e^{-\lambda t}\int_{\R^d}K^{\beta}(\psi,t)g(e^{tS}x-\psi)d\psi dt\right| \\        
    \leqslant& \int_{0}^{\infty}e^{-\Re\lambda t}\sup_{x\in\R^d}\theta(x)\left|\int_{\R^d}K^{\beta}(\psi,t)g(e^{tS}x-\psi)d\psi\right| dt \\
    \leqslant& \int_{0}^{\infty}e^{-\Re\lambda t}\left(\sup_{x\in\R^d}\int_{\R^d}\theta(x)\left|K^{\beta}(\psi,t)\right| \left|g(e^{tS}x-\psi)\right|d\psi\right) dt \\
    \leqslant& \int_{0}^{\infty}e^{-\Re\lambda t}\int_{\R^d}\left|K^{\beta}(\psi,t)\right| \left(\sup_{x\in\R^d}\theta(x)\left|g(e^{tS}x-\psi)\right|\right) d\psi dt \\
            =& \int_{0}^{\infty}e^{-\Re\lambda t}\int_{\R^d}\left|K^{\beta}(\psi,t)\right| \left(\sup_{y\in\R^d}\theta(e^{-tS}(y+\psi))\left|g(y)\right|\right) d\psi dt \\
    \leqslant& \int_{0}^{\infty}e^{-\Re\lambda t}\int_{\R^d}C_{\theta} e^{\eta |\psi|}\left|K^{\beta}(\psi,t)\right| \left(\sup_{y\in\R^d}\theta(y)\left|g(y)\right|\right) d\psi dt \\
            =& \int_{0}^{\infty}e^{-\Re\lambda t} C_{\theta}\left(\int_{\R^d}e^{\eta |\psi|}\left|K^{\beta}(\psi,t)\right| d\psi\right) dt \left\|g\right\|_{C_{\mathrm{b},\theta}} \\
    \leqslant& \int_{0}^{\infty}e^{-\Re\lambda t}C_{4+|\beta|}(t;\bzero,p=1)dt \left\|g\right\|_{C_{\mathrm{b},\theta}}
    \leqslant  \frac{C_{7+|\beta|}}{\left(\Re\lambda-\omega_{\mathrm{b}}\right)^{1-\frac{\left|\beta\right|}{2}}} \left\|g\right\|_{C_{\mathrm{b},\theta}}.
  \end{align*}
\end{proof}

\textbf{Acknowledgment.} The author thanks the referee for several helpful and constructive remarks. 


\def\cprime{$'$}

\end{document}